\newtheorem{theorem}{Theorem}[section]
\newtheorem{lemma}[theorem]{Lemma}
\newtheorem{definition}[theorem]{Definition}
\newtheorem{corollary}[theorem]{Corollary}
\title{\Large\textbf{Optimal Actuator and Observation Location for Time-Varying Systems on a Finite-Time Horizon}
\thanks{This work was
supported by HITEC and IEK-8 of Forschungszentrum J\"ulich.}}
\author{Xueran Wu\thanks{IEK-8, Forschungszentrum J\"ulich, Wilhelm-Johnen-Stra\ss e,
52428, J\"ulich, Germany.
       {\tt\small Corresponding to x.wu@fz-juelich.de}} \ $^{\ddagger}$,\    
Birgit Jacob\thanks{Mathematics Department, Unversity of Wuppertal,
Gau\ss stra\ss e 20, 42119, Wuppertal, Germany.  {\tt\small xueranwu@uni-wuppertal.de, jacob@math.uni-wuppertal.de}.} 
\ \ and Hendrik Elbern$^{\dagger}$\thanks{Rhenish Institute for Environmental Research,  University of Cologne, Aachener Stra\ss e 209,
50931, Cologne, Germany. {\tt\small\quad he@eurad.uni-koeln.de}.
        }
}
\begin{document}
\date{}

\maketitle
\begin{abstract}
The choice of the location of controllers and observations is of great importance for designing
control systems and improving the estimations in various practical problems. For time-varying systems in Hilbert spaces,
the existence and convergence of the optimal location based on linear-quadratic control on a finite-time horizon is studied.
The optimal location of observations for improving the estimation of the state at the final time, based on Kalman filter, is considered as the dual problem to the LQ
optimal problem of the control locations. Further, the existence and convergence of optimal locations of observations for improving the estimation at the initial time,
based on Kalman smoother is discussed. The obtained results are applied to a linear advection-diffusion model.
\end{abstract}
\noindent
\textbf{Keywords:} Approximation, Kalman filter, Kalman smoother, linear-quadratic control, optimal observation location

\section{Introduction}
The choice of the locations of control hardware, such as sensors and actuators,
plays an important role in the designs of control systems for many physical and engineering problems.
Proper locations of sensors and actuators is essential to improve the performance of the controlled system.
Many researchers
have focused on the study of finding the optimal locations of control hardware and different criteria of
optimising control locations were established, such as maximization of observability and controllability
\cite{Johnson69}, \cite{Padula99}, minimizing the linear quadratic regulator cost \cite{Nam96}.
Geromel \cite{Geromel89} successfully reformulated the LQ cost function into a convex optimization problem by mapping the
locations of controller into zero-one vectors and expressed the solution of classic LQ problem in terms of a Riccati equation.
Morris \cite{Morris11} optimized controller locations of time-invariant systems on an infinite-time horizon in Hilbert spaces by solving an algebraic
Riccati equation and showed the convergence of optimal controller locations of a sequence of approximated finite-dimensional systems.
Further, an algorithm \cite{Darivandi13} for the linear quadratic optimal problem of controller locations based on the convexity shown in \cite{Geromel89} are introduced.

The issue of observations is also of great importance of many estimation problems for stochastic systems, such as weather forcasting and data assimilation in meteorology.
For this kind of problems, observations always have low temporal and spatial density. The lack of observations is a major barrier of preventing the improvement of estimations and leading to the accuracy of predictions. On one hand, the insufficient observations
become the main reason that many works are introduced
to improve approaches of estimations in in the recent years. On the other hand,
one possibility to improve the predictive or estimation skill for specific problems is to
target the locations of observations which can potentially result in the
largest forecast improvement in order to make observations more efficient. The better choice of locations of observations can help making more progress of the predictive or estimation skills. In contrast, improper observations probably make no sense to the accuracy of predictions and lead to the waste of resources by optimizing the improper parameters.
There are several papers focusing on this problem from the perspective of applications. For finite-dimensional systems in practice, approaches based on singular value decomposition (\cite{Buizza94}, \cite{Buizza99}) always help determining the direction with the strong influence of observations. However, it cannot solve the optimal problem of observation locations.
Motivated by problems of data assimilation in meteorology, we estimate unknown random variables by Kalman filter and smoother, which has been theoretical foundation of one of the most popular data assimilation approaches in last decade. In fact, since 1960's, besides of applications in meteorology, the Kalman filter and smoother \cite{Kalman61} were widely applied in many other fields to produce optimal linear estimations of states and parameters through a series of observations over time. It provides us an opportunity to define and search for optimal locations of observations by minimizing the covariance based on Kalman filter and smoother.

In this paper, we will start from the infinite-dimensional state space to consider the optimal location problem of controllers and observations for time-varying systems on a finite-time horizon.
First, we study the linear-quadratic optimal location control problem for both deterministic and stochastic systems and develop conditions guaranteeing the existence of optimal locations of linear quadratic control
problems in Section \ref{main results of lq}. Associated with practical applications, since optimal control problems cannot be solved directly in infinite-dimensional spaces,
a sequence of approximations of the original time-varying system have to be considered. Thus, in Section \ref{approximation of opt control}, analogical to the approximation theory of time-invariant systems,
we introduce the similar approximation conditions of evolution operators so as to ensure that the approximated control problems converge to the optimal control problem of the
original infinite-dimensional time-varying system. Further, we show the convergence of minimal costs and optimal locations of the sequence of approximations.
In Section \ref{kalman filter} and Section \ref{ks on hilbert},
we derive the Kalman filter and smoother of time-varying systems in the integral form on Hilbert spaces.
Then, by duality between Kalman filter and linear-quadratic optimal control, under certain conditions,
the nuclearity of the covariance can be guaranteed. In Section \ref{opt loc of kf and ks} based on Kalman filter and smoother,
the existence and convergence of optimal location observations of the estimation of the model state for stochastic systems is shown.
Finally, we apply the obtained results to a three-dimensional advection-diffusion model with the special construction of the emission rate in Section \ref{lq advection diffusion}.
In this example, the operator splitting technique with spatial and temporal discretization is applied to simulate the practical application in meteorology.

\section{Existence of optimal actuator locations}\label{main results of lq}

Throughout this paper, we will always assume that the state space of the time-varying system is a real separable Hilbert space $X$, and the input and output space are Hilbert spaces denoted by $U$ and $Y$, respectively. First, we introduce the notion of  mild evolution operators for the time-varying system.
\begin{definition}\label{mild evolution}
Denote $\Gamma_{a}^{b}: \{(t,s)\vert -\infty<a\leqslant s\leqslant t\leqslant b<\infty\}$.
We call $T(\cdot, \cdot): \Gamma_{a}^{b}\rightarrow \mathcal L(X)$  a {\em mild evolution operator} if
\begin{enumerate}
\item $ T(t,t)=I,$
\item $T(t,r)T(r,s)=T(t,s), \quad  a\leqslant s\leqslant r\leqslant t \leqslant b,$
\item $ T(\cdot, s): [s, b]\rightarrow \mathcal L(X)$ and $T(t, \cdot): [a, t]\rightarrow \mathcal L(X)$ are strongly continuous.
\item $\displaystyle \lambda:= \sup_{(t,s)\in \Gamma_{a}^{b}}\Vert T(t, s)\Vert<\infty.$
\end{enumerate}
\end{definition}

In the following we assume that $T(\cdot, \cdot): \Gamma_{a}^{b}\rightarrow \mathcal L(X)$ is a mild evolution operator, and $B\in L^\infty_{s}(a, b;U, X)$ with
$B^*\in L^\infty_{s}(a, b; X, U )$. Here
\begin{align*}
L^\infty_{s}(a, b; X, Y ):= \{F: [a,b]\rightarrow \mathcal L(X, Y)&\mid F \mbox{ is strongly measurable and}\\
& \Vert F\Vert_\infty:= \text{ess} \hspace{-1ex}\sup_{t\in[a,b]}\Vert F(t)\Vert<\infty\}.
\end{align*}
For an initial time $t_0\in[a,b]$,  we consider the time-varying system described by
\begin{equation}\label{system}
x(t)=T(t,t_0)x_0+\int_{t_0}^{t}T(t,s)B(s)u(s)ds, \quad  t\in [t_0,b],
\end{equation}
where $x_0\in X$ and $u\in L^2(t_0,b;U)$.
We are intersted in the following linear-quadratic optimal control problem.\\

{\parindent0cm\bf Linear-Quadratic Optimal Control Problem:}
Find for $x_0\in X$  a control $u_0\in  L^2(t_0,b;U)$ which minimizes the  cost functional
\begin{equation}\label{cost}
 J(t_0,x_0,u)=\langle x(b), Gx(b)\rangle+\int_{t_0}^{b}\| C(s)x(s)\|^2+\langle u(s), F(s)u(s)\rangle ds,
\end{equation}
where the function $x$ is given by \eqref{system}.
Here  $C\in L^\infty_{s}(a, b;X,Y)$, $G\in\mathcal L(X)$ and $F\in L^\infty_{s}(a, b;U, U)$,  $F(t)$ is  self-adjoint and nonnegative  for  fixed $t$, and  $F^{-1}\in L^\infty_{s}(a, b;U, U)$.\\

It is well known, see \cite{Gibson79}, that the linear-quadratic optimal control problem possesses for $x_0\in X$ a unique solution $u_0$, which is given by $u_0(t)=-L(t) x(t)$, $t\in [t_0,b]$, $L(t)=F^{-1}(t)B^*(t)\Pi(t)$, such that the minimum of the cost functional is given by
\begin{equation*}
\min_{u\in  L^2(t_0,b;U)}J(t_0, x_0, u)=J(t_0, x_0, u_0)=\langle x_0, \Pi(t_0)x_0\rangle,
\end{equation*}
where the self-adjoint nonnegative operator $\Pi(t)$ is the unique solution of the first integral Riccati equation (IRE)
\begin{align}\label{ire}
\Pi(t)x= &\, \,T^*(b,t)GT(b,t)x\nonumber\\
        &+\int_t^{b}T^*(s,t)[C^*(s)C(s)-\Pi(s)B(s)F^{-1}(s)B^*(s)\Pi(s)]T(s,t)xds
\end{align}
and the second IRE
\begin{align}\label{ire2}
\Pi(t)x=&\, T_{\Pi}^*(b,t)GT_{\Pi}(b,t)x\\
        &+\int_t^{b}T_{\Pi}^*(s,t)[C^*(s)C(s)+\Pi(s)B(s)F^{-1}(s)B^*(s)\Pi(s)]T_{\Pi}(s,t)xds,\nonumber
\end{align}
where
\begin{align*}
T_{\Pi}(t,\tau)x=T(t,\tau)x-\int_\tau^{t}T(t,s)B(s)F^{-1}(s)B^*(s)\Pi(s)T_{\Pi}(s,\tau)xds, \  (t,\tau)\in \Gamma_{a}^{b}.
\end{align*}

Now we consider the situation having the opportunity to choose $m$ locations to control and each location varies over a compact set $\Omega\subset \mathds R^l$. We indicate these $m$ locations by the parameter $r\in \Omega^m$, and denote the location-dependent input operator $B(\cdot)$ by $B_r(\cdot)$. Throughout the rest of the paper, by a time-varying system with location-dependent input operator the time-varying system \eqref{system} and the cost functional \eqref{cost} with $B_r$ instead of $B$ is meant. The corresponding solution of the IRE and the Riccati operator $L$ are denoted by $\Pi_r$ and $L_r$, respectively.

In most cases, the initial state $x_0$ is not fixed. This indicates several different ways to define the optimal actuator location problem. We take two possible ways into account here. The first one is to minimize the cost with the worst choice of initial value, which is
\begin{equation*}
\max_{\Vert x_0\Vert=1}\min_{u\in L^2(t_0,b;U)}J_r(x_0,u)=\max_{\Vert x_0\Vert=1}\langle x_0,\Pi_r(t_0)x_0\rangle=\Vert \Pi_r(t_0)\Vert.
\end{equation*}
Let $\ell^r(t_0):=\Vert \Pi_r(t_0)\Vert$, the optimal performance of $r$ is
$
\hat{\ell}(t_0)=\inf_{r\in \Omega^m}\Vert\Pi_r(t_0)\Vert.
$

The second one is to assume that the system is stochastic. Thus, we need to consider the trace of $\Pi_r(t_0)$ instead, since the trace indicates the sum of the diviation of the state vector in each coordinate.
Thus the evaluation of the particular performance of $r$ is given by the nuclear norm of $\Pi_r(t_0)$, which is $\ell_1^r(t_0)=\Vert \Pi_r(t_0)\Vert_1$. Further, the optimal performance is
$$
\hat{\ell}_1(t_0)=\inf_{r\in \Omega^m}\Vert\Pi_r(t_0)\Vert_1.
$$

For time-invariant problems on an infinite time horizon this problem was studied in \cite{Morris11}.
In this section we prove the existence of optimal control locations for deterministic as well as stochastic time varying systems on a finite-time horizon.
\begin{theorem}\label{ex operator}
%For the system \eqref{system} with  location-dependent input operators and  cost functional \eqref{cost}, we assume that
Let $\{B_r\}_r$, $r\in \Omega^m$, be a family of compact operator valued functions with the property that $\lim_{r\rightarrow r_0}\Vert B_r-B_{r_0}\Vert_\infty=0$ for some $r_0\in\Omega^m$. 
Then the solutions of the corresponding integral Riccati equations $\Pi_r$ satisfy
\begin{equation*}
\lim_{r\rightarrow r_0}\Vert \Pi_r(t)-\Pi_{r_0}(t)\Vert=0,  \quad  t\in[a,b],
\end{equation*}
and there exists an optimal location $\hat{r}$ such that for any initial time $t_0\in[a,b]$,
\begin{equation*}
\hat{\ell}(t_0)=\Vert\Pi_{\hat{r}}(t_0)\Vert=\inf_{r\in \Omega^m}\Vert\Pi_r(t_0)\Vert.
\end{equation*}
\end{theorem}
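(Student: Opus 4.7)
The plan is to prove norm-continuity of $r\mapsto\Pi_r(t)$ at $r_0$ (uniformly in $t\in[a,b]$) and then deduce the existence of an optimal location by compactness of $\Omega^m$ together with continuity of $r\mapsto\|\Pi_r(t_0)\|$. A crucial preparatory step is a uniform $r$-independent a priori bound on $\|\Pi_r(t)\|$. This is obtained from the optimality characterization: the trial control $u\equiv 0$ gives
$$\langle x_0,\Pi_r(t_0)x_0\rangle \leq J_r(t_0,x_0,0) = \langle T(b,t_0)x_0,GT(b,t_0)x_0\rangle + \int_{t_0}^b\|C(s)T(s,t_0)x_0\|^2\,ds,$$
whose right-hand side is bounded by $(\lambda^2\|G\|+\lambda^2\|C\|_\infty^2(b-a))\|x_0\|^2$ and is independent of $r$; since $\Pi_r(t)$ is self-adjoint and nonnegative, I obtain $\|\Pi_r(t)\|\leq M$ for some constant $M$ and all $r\in\Omega^m$, $t\in[a,b]$.

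Next, set $A_r(s) := B_r(s)F^{-1}(s)B_r^*(s)$ and $\Delta_r(t) := \Pi_r(t)-\Pi_{r_0}(t)$. Subtracting the IRE \eqref{ire} for $r_0$ from that for $r$ and applying the telescoping identity
$$\Pi_r A_r\Pi_r - \Pi_{r_0}A_{r_0}\Pi_{r_0} = \Delta_r A_r\Pi_r + \Pi_{r_0}(A_r-A_{r_0})\Pi_r + \Pi_{r_0}A_{r_0}\Delta_r$$
yields the integral relation
$$\Delta_r(t) = -\int_t^b T^*(s,t)\bigl[\Delta_r(s)A_r(s)\Pi_r(s) + \Pi_{r_0}(s)(A_r(s)-A_{r_0}(s))\Pi_r(s) + \Pi_{r_0}(s)A_{r_0}(s)\Delta_r(s)\bigr]T(s,t)\,ds.$$
The key feature is that the nonlinearity of the IRE has been linearized in $\Delta_r$; the only term free of $\Delta_r$ carries the factor $A_r - A_{r_0}$, and the submultiplicative bound $\|A_r-A_{r_0}\|_\infty \leq \|F^{-1}\|_\infty(\|B_r\|_\infty+\|B_{r_0}\|_\infty)\|B_r-B_{r_0}\|_\infty$ together with the hypothesis $\|B_r-B_{r_0}\|_\infty\to 0$ makes this term of size $O(\|B_r-B_{r_0}\|_\infty)$ uniformly in $s$.

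Taking norms, using the a priori bound of the first paragraph, $\lambda<\infty$, and $\|A_r\|_\infty\leq\|F^{-1}\|_\infty\|B_r\|_\infty^2$, I arrive at
$$\|\Delta_r(t)\| \leq \varepsilon(r) + K\int_t^b \|\Delta_r(s)\|\,ds,\qquad t\in[a,b],$$
with $\varepsilon(r)\to 0$ as $r\to r_0$ and $K$ independent of $r$. A backward Gronwall inequality then yields $\|\Delta_r(t)\|\leq\varepsilon(r)e^{K(b-t)}$, proving the first assertion. Since this continuity holds at every $r_0\in\Omega^m$, the map $r\mapsto\|\Pi_r(t_0)\|$ is continuous on the compact set $\Omega^m$ and therefore attains its infimum at some $\hat r$, which is the required optimal actuator location.

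The main obstacle I anticipate is the combination of the algebraic decomposition and the Gronwall closure: the IRE is quadratic in $\Pi$, so a naive subtraction leaves quadratic terms in $\Delta_r$ and a Gronwall argument does not close directly. The remedy is the symmetric three-term splitting above, which isolates the $B_r$-dependence into a single inhomogeneity of size $O(\|B_r-B_{r_0}\|_\infty)$ and leaves only linear contributions in $\Delta_r$, whose prefactors are tame thanks to the uniform bound from the first paragraph. I note that the compactness of each $B_r$ is not actually invoked for this operator-norm statement; it should only become essential in later results involving nuclearity or trace-norm estimates in the stochastic setting.
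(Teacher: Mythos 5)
Your proposal is correct, but it takes a genuinely different route from the paper. The paper first invokes Gibson's perturbation theorem to obtain only strong convergence $\Pi_r(t)x\to\Pi_{r_0}(t)x$, gets a uniform bound on $\Vert\Pi_r\Vert_\infty$ by inserting a feedback trial control, and then writes $\Pi_r(t)-\Pi_{r_0}(t)=\int_t^b T^*(s,t)\bigl(S_r(s)-S_{r_0}(s)\bigr)T(s,t)\,ds$ with $S_r=C^*C-L_r^*FL_r$; the compactness of $B_{r_0}(t)$ is precisely what lets it upgrade strong convergence of $\Pi_r$ to norm convergence of $L_r$, hence of $S_r$, after which dominated convergence finishes. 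You instead close a backward Gronwall inequality directly on $\Delta_r=\Pi_r-\Pi_{r_0}$: your three-term telescoping identity is algebraically correct, the only $\Delta_r$-free term is $O(\Vert B_r-B_{r_0}\Vert_\infty)$ by the submultiplicative bound on $A_r-A_{r_0}$, and the $u\equiv 0$ trial control gives an $r$-independent bound $\Vert\Pi_r(t)\Vert\le M$ (simpler than the paper's feedback argument). What your route buys: an explicit rate $\Vert\Delta_r(t)\Vert\le\varepsilon(r)e^{K(b-t)}$, convergence uniform in $t$, no appeal to the cited strong-convergence theorem, and, as you correctly observe, no use of the compactness of $B_r(t)$ — in the paper that hypothesis serves only to convert strong into norm convergence, so your argument establishes the operator-norm statement under weaker assumptions. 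Two minor points to make explicit: Gronwall requires $s\mapsto\Vert\Delta_r(s)\Vert$ to be measurable and integrable, which holds because $\Pi_r(\cdot)$ is strongly continuous (the norm is then a countable supremum of continuous functions) and bounded by $2M$; and, exactly as in the paper's own proof, the final compactness argument producing $\hat r$ tacitly needs the convergence hypothesis on $r\mapsto B_r$ at every point of $\Omega^m$, not merely at the single $r_0$.
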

\begin{proof}
Thanks to the assumptions on $B_r$, there exists $\delta>0$ such that
$\lambda_B:= \sup \{ \Vert B_r(t)\| \mid t\in[a, b], \Vert r-r_0\Vert\leqslant\delta\}<\infty$. We denote by
$\mathbf B(r_0, \delta)$ the set $\mathbf B(r_0, \delta):=\{r\in \Omega^m: \Vert r-r_0\Vert\leqslant \delta\}$.
Thus, \cite[Theorem 5.1]{Gibson79} implies for every $x\in X$
\begin{equation*}
\Pi_r(t)x\rightarrow \Pi_{r_0}(t)x,\quad   r\rightarrow r_0.
\end{equation*}
For any feedback control $\tilde u(t)=\tilde{L}(t)x(t)$, $\tilde{L}\in L^\infty_s(a, b;X,U)$,
\begin{eqnarray*}\label{Pi bound}
&&\langle x(t), \Pi_r(t)x(t)\rangle\leqslant J(t,x(t),\tilde u)\\\nonumber
&=&\langle x(b), Gx(b)\rangle+\int_{t}^{b} \|C(s)x(s)\|^2+\langle \tilde{L}(s)x(s), F(s)\tilde{L}(s)x(s)\rangle ds \\\nonumber &=&\Vert G^{\frac{1}{2}}T_{\tilde{L},r}(b,t)x(t)\Vert^2\\
&&+\int_{t}^{b}\Vert C(s)T_{\tilde{L},r}(s,t)x(t)\Vert^2+\Vert F^{\frac{1}{2}}(s)\tilde{L}(s)T_{\tilde{L},r}(s,t)x(t) \Vert^2 ds,
\end{eqnarray*}
where $T_{\tilde{L},r}(t,\tau)x=T(t,\tau)x+\int_\tau^{t}T(t,s)B_r(s)\tilde{L}(s)T_{\tilde{L},r}(s,\tau)xds$, $(t,\tau)\in \Gamma_{a}^{b}$.

Since the family $B_r$ is uniformly bounded by $\lambda_B$ on $\mathbf B(r_0, \delta)$, \cite[Theorem 2.1]{Gibson79} implies for all $ r\in \mathbf B(r_0, \delta), (t,\tau)\in \Gamma_{a}^{b}$, $\|T_L(t,\tau)\|\le
\lambda \exp({\lambda \lambda_B\Vert \tilde L\Vert_\infty(t-\tau)}).$
Further, because $C\in L^\infty_s(a, b;X, Y)$, $F\in L^\infty_s(a, b;U, U)$, there exists a constant $\lambda_\Pi$, independent of $t$ and $r\in \mathbf B(r_0, \delta)$, such that
$
\Vert\Pi_r\Vert_\infty \leqslant \lambda_\Pi.
$

For $S_r=C^*C-L_r^*FL_r$, where $L_r=F^{-1}B_r^*\Pi_r$,  we obtain
\begin{equation*}
\Pi_r(t)x-\Pi_{r_0}(t)x=\int_t^{b}T^*(s,t)\left(S_r(s)-S_{r_0}(s)\right)T(s,t)xds, \quad  x\in X.
\end{equation*}
Since  $F^{-1}\in L^\infty_s(a, b;U,U)$ and the operator $B_{r_0}(t)$ is compact
for any $t\in [a,b]$, we have
\begin{eqnarray*}
\Vert L_r^*(t)-L_{r_0}^*(t)\Vert&\leqslant&\Vert F^{-1}\Vert_\infty(\Vert \Pi_r(t)\Vert\Vert B_r(t)-B_{r_0}(t)\Vert\\
&&+\Vert (\Pi_r(t)-\Pi_{r_0}(t))B_{r_0}(t)\Vert)\longrightarrow 0, \quad  r\rightarrow r_0,
\end{eqnarray*}
which shows
\begin{eqnarray*}
\Vert S_r(t)-S_{r_0}(t)\Vert
                 &\leqslant& \Vert L_{r_0}^*(t)-L_r^*(t)\Vert\Vert F(t)L_{r_0}(t)\Vert\\
&&+\Vert L_r^*(t)F(t)\Vert\Vert L_{r_0}(t)-L_r(t)\Vert\longrightarrow 0, \quad  r\rightarrow r_0.
\end{eqnarray*}
From the uniform boundedness of $F$, $B_r$ and $\Pi_r$ on $ \mathbf B(r_0, \delta)$, $L_r$ and further $S_r$ are uniformly bounded for all $t\in[a,b]$ and
$\mathbf B(r_0, \delta)$. According, thanks to the dominated convergence theorem, we obtain
$\Vert \Pi_r(t)-\Pi_{r_0}(t)\Vert\rightarrow 0, \ r\rightarrow r_0.$

Additionally, since $r\in \Omega^m$, $\Omega^m$ is a compact set, 
there exists an optimal location $\hat{r}$ such that $\Vert\Pi_{\hat{r}}(t_0)\Vert=\inf_{r\in \Omega^m}\Vert\Pi_r(t_0)\Vert.$
\end{proof}

Theorem \ref{ex operator} shows the continuity of optimal actuator locations and existence of the optimal location in the operator norm. For stochastic systems, the above problem leads to the nuclear norm.
Thus, first we develop conditions which guarantee that the Riccati operator is a nuclear operator. Similar to \cite[Theorem 3.1]{Curtain01},  we have
\begin{theorem}\label{nuclear}
Let $T(\cdot,\cdot)$ be a mild evolution operator on $X$, $B\in L^\infty_{s}(a, b;\mathbb C^p, X)$, and $C\in L^\infty_{s}(a, b;X, \mathbb C^q)$.
Then for any $t_0\in[a,b]$ we have:
\begin{enumerate}
\item The observability operator $\mathcal C_{t_0}: X\rightarrow L^2(t_0,b;\mathbb C^q)$ defined by
\begin{equation*}
(\mathcal C_{t_0}x_0)(\cdot)=C(\cdot)T(\cdot,t_0)x_0, \quad  x_0\in X,
\end{equation*}
is a Hilbert-Schmidt operator;
\item The controllability operator $\mathcal B_{t_0}: L^2(t_0, b; \mathbb C^p)\rightarrow X$ defined by
\begin{equation*}
 \mathcal B_{t_0}u=\int_{t_0}^{b}T(b,s)B(s)u(s)ds
\end{equation*}
is a Hilbert-Schmidt operator;
\item $\mathcal C_{t_0}^*\mathcal C_{t_0}$ and $\mathcal B_{t_0}\mathcal B_{t_0}^*$ are nuclear operators.
\end{enumerate}
\end{theorem}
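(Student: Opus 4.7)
\medskip

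\textbf{Plan.} The key observation is that the finiteness of $p$ and $q$ forces $B(s)$ and $C(s)$ to have finite rank uniformly in $s$, which is what ultimately produces the Hilbert--Schmidt (HS) property. The strategy is therefore: first verify HS for $\mathcal C_{t_0}$ and $\mathcal B_{t_0}$ by a direct sum-of-squares computation against an orthonormal basis of $X$, then deduce nuclearity in part (3) from the standard fact that $A^*A$ is trace class whenever $A$ is Hilbert--Schmidt.

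\medskip

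For (1), I would pick an orthonormal basis $\{e_k\}$ of $X$ and the standard basis $\{f_j\}_{j=1}^{q}$ of $\mathbb C^q$, and introduce $g_j(s):=C^*(s)f_j\in X$, so that $\|C(s)x\|^2=\sum_{j=1}^{q}|\langle x,g_j(s)\rangle|^2$ and $\|g_j(s)\|\le \|C\|_\infty$. Strong measurability of $C$ yields strong (hence weak) measurability of $s\mapsto g_j(s)$, which together with the strong continuity of $T(\cdot,t_0)$ makes $s\mapsto |\langle e_k,T^*(s,t_0)g_j(s)\rangle|^2$ measurable. I then compute, using Tonelli on non-negative integrands,
\begin{align*}
\sum_k\|\mathcal C_{t_0}e_k\|^2_{L^2}
&=\int_{t_0}^{b}\sum_{j=1}^{q}\sum_k |\langle e_k,T^*(s,t_0)g_j(s)\rangle|^2\,ds
=\int_{t_0}^{b}\sum_{j=1}^{q}\|T^*(s,t_0)g_j(s)\|^2\,ds\\
&\le \lambda^2 q\,(b-t_0)\,\|C\|_\infty^{2}<\infty,
\end{align*}
which gives $\mathcal C_{t_0}\in \mathcal L_{HS}(X,L^2(t_0,b;\mathbb C^q))$.

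\medskip

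For (2), the cleanest route is to show that the adjoint $\mathcal B_{t_0}^{*}: X\to L^2(t_0,b;\mathbb C^p)$, given by $(\mathcal B_{t_0}^*x)(s)=B^*(s)T^*(b,s)x$, is HS; since an operator and its adjoint share the HS norm, this finishes the argument. The calculation is essentially the same as in (1), using the representation $B^*(s)x=\sum_{i=1}^{p}\langle x,h_i(s)\rangle\hat e_i$ with $h_i(s):=B(s)\hat e_i$, $\|h_i(s)\|\le \|B\|_\infty$, leading to the bound $\lambda^2 p(b-t_0)\|B\|_\infty^{2}$. Part (3) then follows from the elementary fact that the product of two Hilbert--Schmidt operators is nuclear: $\mathcal C_{t_0}^*\mathcal C_{t_0}$ and $\mathcal B_{t_0}\mathcal B_{t_0}^{*}$ are trace class, with nuclear norm equal to the square of the corresponding HS norm.

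\medskip

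\textbf{Main obstacle.} None of the steps is deep, but two measurability issues must be handled carefully to justify the use of Tonelli's theorem and to make the bounds $\|g_j(s)\|\le\|C\|_\infty$ meaningful almost everywhere: namely, that $s\mapsto g_j(s)$ and $s\mapsto h_i(s)$ are strongly measurable, and that the pointwise bounds hold on a set of full measure rather than everywhere. Both reduce to the assumption $C,B\in L^\infty_s$ and the strong continuity of $T$ and $T^*$, so there is no real difficulty, only bookkeeping.
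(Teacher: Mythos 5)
Your proof is correct and follows essentially the same route as the paper: reduce to the finitely many scalar components coming from $\mathbb C^q$ (resp.\ $\mathbb C^p$), bound the resulting $X$-valued kernel $s\mapsto T^*(s,t_0)C^*(s)e_j$ by $\lambda\Vert C\Vert_\infty$ on the finite interval, handle $\mathcal B_{t_0}$ through its adjoint, and get part (3) from the fact that a product of two Hilbert--Schmidt operators is nuclear. The only difference is cosmetic: where you do the Parseval/Tonelli computation by hand, the paper invokes the standard characterization of Hilbert--Schmidt operators into $L^2$ via square-integrable kernels (Weidmann, Theorem 6.12) applied to the same bounded kernel.
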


\begin{proof}
1. Defining $\mathcal C_{t_0,i}: X\rightarrow L^2(t_0,b)$, $i\in \{1, \dots, q\}$
\begin{equation*}
(\mathcal C_{t_0,i}x_0)(s)=\langle C(s)T(s,t_0)x_0, e_i\rangle,  \quad  s\geqslant t_0,
\end{equation*}
where $\{e_i\}$ is the standard orthogonal basis of $\mathbb C^q$. We have
\begin{eqnarray*}
\vert(\mathcal C_{t_0,i}x_0)(s)\vert&=&\vert\langle C(s)T(s,t_0)x_0, e_i\rangle\vert \leqslant \Vert C(s)T(s,t_0)x_0\Vert \Vert e_i\Vert\\
                                   &\leqslant& \Vert C(s)\Vert\Vert T(s,t_0)\Vert\Vert x_0\Vert <\infty.
\end{eqnarray*}
\cite[Theorem 6.12]{Weidmann80} implies that $\mathcal C_{t_0,i}$ is Hilbert-Schmidt, that is,  for any orthogonal basis $\{\bar{e}_i\}$ of $X$, we have
$
\sum_{i=1}^{q}\sum_{j=1}^{\infty}\Vert\mathcal C_{t_0,i}\bar{e}_j\Vert_{L^2(t_0,b)}^2<\infty.
$
Since $$\Vert \mathcal C_{t_0} \bar{e}_j\Vert_{L^2(t_0,b)}^2=\sum_{i=1}^{q}\Vert\mathcal C_{t_0,i}\bar{e}_j\Vert_{L^2(t_0,b)}^2,$$ we have
\begin{equation*}
\sum_{j=1}^{\infty}\Vert \mathcal C_{t_0} \bar{e}_j\Vert_{L^2(t_0,b; \mathbb C^q)}^2 =\sum_{i=1}^{q}\sum_{j=1}^{\infty}\Vert\mathcal C_{t_0,i}\bar{e}_j\Vert_{L^2(t_0,b)}^2<\infty,
\end{equation*}
which shows that $\mathcal C_{t_0}$ is a Hilbert-Schmidt operator.

2. According to \cite[Theorem 6.9]{Weidmann80},  $\mathcal B_{t_0}$ is Hilbert-Schmidt if and only if $\mathcal B^*_{t_0}$ is Hilbert-Schmidt. An easy calculation shows $\mathcal B_{t_0}^*: X \rightarrow L^2(t_0, b; U)$,
\begin{equation*}
(\mathcal B_{t_0}^*x)(\cdot)= B_{t_0}^*(\cdot)T^*(b,\cdot)x
\end{equation*}
From part 1,  $\mathcal B_{t_0}^*$ is Hilbert-Schmidt, and so is $\mathcal B_{t_0}$.

3. Since $\Vert\mathcal C_{t_0}^*\mathcal C_{t_0}\Vert_{1}\leqslant \Vert\mathcal C_{t_0}^*\Vert_{HS}\Vert\mathcal C_{t_0}\Vert_{HS}<\infty$
and $\Vert\mathcal B_{t_0}^*\mathcal B_{t_0}\Vert_{1}\leqslant \Vert\mathcal B_{t_0}^*\Vert_{HS}\Vert\mathcal B_{t_0}\Vert_{HS}<\infty$,
$\mathcal C_{t_0}^*\mathcal C_{t_0}$ and $\mathcal B_{t_0}\mathcal B_{t_0}^*$ are nuclear operator.
\end{proof}

\begin{corollary}\label{coro}
Assume that the input space $U$ and the output space $Y$ are finite-dimensional and $G$ that is a nuclear operator, then the unique nonnegative self-adjoint solution $\Pi(t_0)$ of the integral Riccati equation
is a nuclear operator.
\end{corollary}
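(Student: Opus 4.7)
The approach is to use the first integral Riccati equation \eqref{ire} evaluated at $t=t_0$, which expresses $\Pi(t_0)$ as the sum of three bounded operators, and to show that each of them is nuclear. Since the trace class is a two-sided ideal in $\mathcal{L}(X)$ that is closed under sums and differences, this immediately yields the claim. A side benefit of working from \eqref{ire} rather than \eqref{ire2} is that only the original mild evolution $T$ appears, so no perturbation result on a closed-loop evolution is needed.

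The first summand $T^*(b,t_0)\,G\,T(b,t_0)$ is nuclear by the ideal property: $G$ is nuclear by hypothesis and $T(b,t_0)\in\mathcal{L}(X)$. The second summand $\int_{t_0}^{b}T^*(s,t_0)C^*(s)C(s)T(s,t_0)\,ds$ is, by a direct adjoint computation, exactly $\mathcal{C}_{t_0}^{\ast}\mathcal{C}_{t_0}$ for the observability operator $\mathcal{C}_{t_0}$ of Theorem \ref{nuclear}. Since $Y=\mathbb{C}^q$ is finite-dimensional by hypothesis, part~3 of that theorem applies and delivers nuclearity.

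The remaining summand $\int_{t_0}^{b}T^*(s,t_0)\,\Pi(s)B(s)F^{-1}(s)B^*(s)\Pi(s)\,T(s,t_0)\,ds$ requires a brief symmetrisation step. I would write $F^{-1}(s)=F^{-1/2}(s)F^{-1/2}(s)$, which is legal because $F(s)$ is self-adjoint and strictly positive with bounded inverse, and regroup the integrand as $\widetilde{C}^{\ast}(s)\widetilde{C}(s)$ with $\widetilde{C}(s):=F^{-1/2}(s)B^*(s)\Pi(s)\in\mathcal{L}(X,\mathbb{C}^p)$. Using the standard a priori estimate $\Pi\in L^\infty_s(a,b;X,X)$ already exploited in the proof of Theorem \ref{ex operator}, together with the essential boundedness of $B^*$ and $F^{-1/2}$, one obtains $\widetilde{C}\in L^\infty_s(a,b;X,\mathbb{C}^p)$. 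Theorem \ref{nuclear}(3) applied with $\widetilde{C}$ in place of $C$, now using that $U=\mathbb{C}^p$ is finite-dimensional, shows that this third term is also nuclear.

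Summing the three nuclear contributions yields the nuclearity of $\Pi(t_0)$. The main technical point to verify is that $\widetilde{C}$ genuinely lies in $L^\infty_s(a,b;X,\mathbb{C}^p)$, i.e.\ its strong measurability; in the finite-dimensional input setting of the corollary this reduces to measurability of a matrix-valued function obtained from $F(s)$ by continuous functional calculus, so the check is routine rather than substantive.
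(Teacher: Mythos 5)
Your proof is correct, but it follows a genuinely different route from the paper. The paper works from the second Riccati equation \eqref{ire2}: it stacks the two weights into a single output operator $\bigl(\begin{smallmatrix} C \\ F^{1/2}L \end{smallmatrix}\bigr)$ acting along the closed-loop evolution $T_L=T_\Pi$, so that $\Pi(t_0)=T_L^*(b,t_0)GT_L(b,t_0)+\mathcal C_{t_0}^*\mathcal C_{t_0}$ is exhibited as a sum of two nonnegative nuclear operators, the second being nuclear by Theorem \ref{nuclear} exactly as in your argument. You instead stay with the first equation \eqref{ire}, split the integral into the $C^*C$ part and the quadratic part, and symmetrise the latter as $\widetilde C^*\widetilde C$ with $\widetilde C=F^{-1/2}B^*\Pi$, using that the trace class is an ideal closed under sums and differences (your closing phrase ``summing the three contributions'' should really say ``combining'', since the third term enters with a minus sign, but you already noted closure under differences, so nothing is lost). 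What your route buys: no perturbed evolution operator $T_L$ and no appeal to the equivalence of the two IREs' feedback form is needed, only the open-loop $T$; the price is that you must justify $\widetilde C\in L^\infty_s(a,b;X,\mathbb C^p)$, i.e.\ the uniform bound on $\Pi$ (available from Gibson / the argument in Theorem \ref{ex operator}) and the measurability of $F^{-1/2}$, which you correctly reduce to functional calculus on a matrix-valued function, and you lose the representation of $\Pi(t_0)$ as a sum of nonnegative terms. The paper's closed-loop form has the additional advantage that the specific operator $\mathcal C_{t_0}$ it constructs is reused verbatim (as $\mathcal C_{t,r}$ and $\mathcal C_{t,n}$) in Theorems \ref{ex nuclear} and \ref{approximation nuclear} to obtain nuclear-norm continuity and convergence via Hilbert--Schmidt estimates, so if you wanted to continue along the paper's development your decomposition would have to be supplemented by analogous perturbation arguments there. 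As a proof of the corollary itself, your argument is complete.
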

\begin{proof}
Defining the bounded operator $\mathcal C_{t_0}: X\rightarrow L^2( t_0,b; U\times Y)$ by
\begin{equation*}
(\mathcal C_{t_0}x_0)(\cdot) =\left(\begin{array}{cc}
                    C(\cdot)\\
                    F^{\frac{1}{2}}(\cdot)L(\cdot)
                  \end{array}\right)T_L(\cdot, t_0)x_0, \quad  L=F^{-1}B^*\Pi.
\end{equation*}
 $\mathcal C_{t_0}$ is Hilbert-Schmidt by Theorem \ref{nuclear}.1
The second IRE \eqref{ire2} can be rewritten as
\begin{equation*}
\Pi(t_0)x= T_L^*(b,t_0)GT_L(b,t_0)x+\mathcal C_{t_0}^*\mathcal C_{t_0}x, \quad  x\in X.
\end{equation*}
Form Theorem \ref{nuclear}.3 and the nuclearity of $G$, $\Pi(t)$ is a nuclear operator.
\end{proof}

\begin{lemma}\label{per lemma}
Assume  $T(\cdot,\cdot)$ and $T_i(\cdot,\cdot)$, $i\in \mathbb N$,  are  mild evolution operators which are  uniformly
bounded by $\lambda_T$, $D_i, D \in L^\infty_s(a, b;X,X)$ satisfy $\Vert D_i(t)x - D(t)x\Vert\rightarrow 0$ as $i\rightarrow \infty$
for every $x\in X$ and $\sup_{i}\{\Vert D_i\Vert_\infty,\Vert D\Vert_\infty\}\leqslant \lambda_D$. $T_{D_i}(\cdot, \cdot)$,
$T_D(\cdot, \cdot)$ denote the perturbed evolution operators corresponding to the perturbation of $T_i(\cdot, \cdot)$ by $D_i$ and $T(\cdot,\cdot)$ by D.
If $\Vert T_i(t,\tau)x-T(t,\tau)x\Vert\rightarrow 0$ as $ i\rightarrow \infty $ for $x\in X$, then for any $(t,\tau)\in \Gamma_{a}^{b}$ and $x\in X$,
\begin{equation*}
\Vert T_{D_i}(t,\tau)x-T_D(t,\tau)x\Vert\rightarrow 0,\quad  i\rightarrow \infty.
\end{equation*}
\end{lemma}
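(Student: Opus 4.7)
The plan is to subtract the two defining integral equations for $T_{D_i}(\cdot,\cdot)$ and $T_D(\cdot,\cdot)$, split the resulting integrand via a three-way telescoping so that the ``feedback'' part (which reproduces the error itself) separates from three driving terms that each tend to zero, and then close the argument with a Gronwall-type inequality together with dominated convergence.

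First, since $T$ and the $T_i$ are uniformly bounded by $\lambda_T$ and the $D, D_i$ are uniformly bounded by $\lambda_D$, Picard iteration on the perturbation equation (cf.\ \cite[Theorem 2.1]{Gibson79}) gives a constant $\lambda_{T_D}:=\lambda_T\exp(\lambda_T\lambda_D(b-a))$, independent of $i$, such that $\Vert T_{D_i}(t,\tau)\Vert,\Vert T_D(t,\tau)\Vert\leqslant\lambda_{T_D}$ on $\Gamma_a^b$. Next, adding and subtracting $T_i(t,s)D_i(s)T_D(s,\tau)x$ and $T_i(t,s)D(s)T_D(s,\tau)x$ inside the integral yields
\begin{align*}
T_{D_i}(t,\tau)x-T_D(t,\tau)x &= [T_i(t,\tau)-T(t,\tau)]x\\
&\quad+\int_\tau^{t}T_i(t,s)D_i(s)\bigl[T_{D_i}(s,\tau)x-T_D(s,\tau)x\bigr]\,ds\\
&\quad+\int_\tau^{t}T_i(t,s)[D_i(s)-D(s)]T_D(s,\tau)x\,ds\\
&\quad+\int_\tau^{t}[T_i(t,s)-T(t,s)]D(s)T_D(s,\tau)x\,ds.
\end{align*}
Setting $\phi_i(t):=\Vert T_{D_i}(t,\tau)x-T_D(t,\tau)x\Vert$ and denoting by $\beta_i(t)$ the sum of the norms of the three driving terms, we obtain
$\phi_i(t)\leqslant\beta_i(t)+\lambda_T\lambda_D\int_\tau^{t}\phi_i(s)\,ds$.

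The crucial step is showing $\beta_i(t)\to 0$ pointwise in $t$ while staying uniformly bounded. The first term vanishes by hypothesis. For the $D_i-D$ integral, for each fixed $s$ the vector $y_s:=T_D(s,\tau)x$ is a fixed element of $X$, so $\Vert T_i(t,s)[D_i(s)-D(s)]y_s\Vert\leqslant\lambda_T\Vert[D_i(s)-D(s)]y_s\Vert\to 0$ by the assumed strong convergence of $D_i$; the integrand is dominated by $2\lambda_T\lambda_D\lambda_{T_D}\Vert x\Vert$, so the dominated convergence theorem sends the integral to zero. The third term is handled identically, this time applying the hypothesis $T_i(t,s)z_s\to T(t,s)z_s$ to the fixed vector $z_s:=D(s)T_D(s,\tau)x$. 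A Gronwall estimate then gives $\phi_i(t)\leqslant\beta_i(t)+\lambda_T\lambda_D\int_\tau^{t}\beta_i(s)e^{\lambda_T\lambda_D(t-s)}\,ds$, and a second application of dominated convergence (now in $s$, with $\beta_i$ uniformly bounded by a multiple of $\Vert x\Vert$) yields $\phi_i(t)\to 0$ as $i\to\infty$.

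The main obstacle is verifying the integrability and domination needed for the two dominated-convergence arguments in $\beta_i$; this reduces to the strong measurability of $D$, $D_i$ and of the maps $s\mapsto T(t,s)y$, $s\mapsto T_i(t,s)y$ (built into Definition \ref{mild evolution} and into $L^\infty_s$), combined with the uniform bounds $\lambda_T,\lambda_D,\lambda_{T_D}$. Everything else is routine manipulation of the integral equations.
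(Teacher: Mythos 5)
Your proof is correct. It rests on the same three-way telescoping of the integrand (the feedback term $T_iD_i[T_{D_i}-T_D]$ plus the two driving terms in $D_i-D$ and $T_i-T$) and the same uniform bound $\lambda_T e^{\lambda_T\lambda_D(b-a)}$ that the paper uses, but you close the argument differently: the paper expands $T_{D_i}$ and $T_D$ in their Picard series, proves $d_{i,n}(t,\tau)x=T_{D_i,n}(t,\tau)x-T_{D,n}(t,\tau)x\to 0$ by induction on $n$ (each induction step being exactly your telescoping applied to the $n$-th term), and then sums the series using the factorial bounds $\lambda_T(\lambda_T\lambda_D)^n(t-\tau)^n/n!$ and dominated convergence over the series index, whereas you keep the full integral equations and absorb the feedback term by the Gronwall--Bellman inequality, needing dominated convergence only in the time variable $s$. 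The two routes are equivalent in substance (Gronwall is, in effect, a resummation of the same iterated-integral bounds), but your version avoids the induction over the series and the convergence-of-series bookkeeping, which the paper treats somewhat loosely, at the price of having to note that $\phi_i$ and $\beta_i$ are measurable and bounded so that Gronwall and dominated convergence apply --- a point you correctly address via strong continuity of the evolution operators and the $L^\infty_s$ assumptions.
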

\begin{proof}
As in \cite{Curtain78}, we construct $T_{D_i}(t, \tau)$ as
$T_{D_i}(t, \tau)=\sum_{n=0}^\infty T_{D_i,n}(t, \tau),$
where
\begin{equation*}
T_{D_i,0}(t, \tau)=T_i(t, \tau), \quad  T_{D_i,n}(t, \tau)x=\int_\tau^tT_i(t,s)D_i(s)T_{D_i,n-1}(s,\tau)xds, \ x\in X.
\end{equation*}

By induction we obtain
$
\Vert T_{D_i,n}(t,\tau) \Vert\leqslant \lambda_T(\lambda_T\lambda_D)^n\frac{(t-\tau)^n}{n!}.
$
$T_D(t,\tau)$ can be constructed in a similar manner with the same upper bound.

Defining $d_{i,n}(t,\tau)=T_{D_i,n}(t,\tau)-T_{D,n}(t,\tau)$, we have
$d_{i,0}(t,\tau)=T_i(t,\tau)-T(t,\tau)$,
\begin{align*}
d_{i,n}(t,\tau)= & \int_\tau^tT_i(t,s)D_{i}(s)d_{i,n-1}(s,\tau)ds+\int_\tau^tT_i(t,s)[D_i(s)-D(s)]T_{D,n-1}(s,\tau)ds\\
&+ \int_\tau^t[T_i(t,s)-T(t,s)]D(s)T_{D,n-1}(s,\tau)ds.
\end{align*}
The uniform boundedness of $\{T_{D_i}(t,\tau)\}_{i\in\mathds N}$ and $T_D(t,\tau)$ implies
\begin{equation*}
\Vert\sum_{n=0}^\infty d_{i,n}(t,\tau)\Vert \leqslant \Vert T_{D_i}(t,\tau)\Vert+\Vert T_D(t,\tau)\Vert<\infty.
\end{equation*}
Due to $\sup_{i}\{\Vert D_i\Vert_\infty,\Vert D\Vert_\infty\}\leqslant \lambda_D$ and $T(\cdot,\cdot)$, $T_i(\cdot,\cdot)$ are uniformly bounded, the mild evolution operators $T_{D}(\cdot,\cdot)$, $T_{D_i}(\cdot,\cdot)$,  are uniformly bounded, and further for any $n\in \mathds N$,
$
\sup_{i}\sup_{(t,\tau)\in \Gamma_{t_0}^{b}}\Vert d_{i,n}(t,\tau)\Vert<\infty.
$
Meanwhile, since $\Vert D_i(t)x - D(t)x\Vert\rightarrow 0$,
$
\Vert d_{i,0}(t,\tau)x\Vert =\Vert T_i(t,\tau)x-T(t,\tau)x\Vert\rightarrow 0, \quad  i\rightarrow \infty.
$
Hence,
\begin{eqnarray}\label{diff}
\Vert d_{i,n}(t,\tau)x\Vert&\leqslant& \int_\tau^t\Vert T_i(t,s)\Vert \Vert D_{i}(s)\Vert \Vert d_{i,n-1}(t,\tau)x\Vert ds\nonumber\\
&&+\int_\tau^t\Vert T_i(t,s)\Vert\Vert [D_i(s)-D(s)]T_{D,n-1}(s,\tau)x\Vert ds \nonumber\\
&&+\int_\tau^t\Vert(T_i(t,s)-T(t,s))(s)T_{D, n-1}(s,\tau)x\Vert ds \longrightarrow 0, \quad  i\rightarrow \infty.
\end{eqnarray}
By dominated convergence theorem,
\begin{equation*}
\Vert T_{D_i}(t,\tau)x-T_D(t,\tau)x\Vert\leqslant \sum_{n=0}^\infty \Vert d_{i,n}(t,\tau)x\Vert\rightarrow 0, \quad  i\rightarrow \infty.
\end{equation*}
\end{proof}

\begin{corollary}\label{per corollary}
For any mild evolution operator $T(\cdot,\cdot)$ with uniform bound $\lambda_T$ and $D_i, D \in L^\infty_s(t_0, b;X,X)$ with $\sup_{i}\{\Vert D_i\Vert_\infty,\Vert D\Vert_\infty\}\leqslant \lambda_D$, if $\Vert D_i(t) - D(t)\Vert\rightarrow 0$, then for  $T_D(\cdot, \cdot)$ which is the perturbation of $T(\cdot, \cdot)$ by $D_i$ and $T_{D_i}(\cdot, \cdot)$ which is the perturbation evolution operator of  $T(\cdot,\cdot)$ by $D$, we have
\begin{equation*}
\Vert T_{D_i}(t,\tau)-T_D(t,\tau)\Vert\rightarrow 0,\quad   i\rightarrow \infty.
\end{equation*}
\end{corollary}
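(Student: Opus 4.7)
The plan is to mimic the proof of Lemma \ref{per lemma}, exploiting the simplification that all the unperturbed operators coincide, i.e.\ $T_i = T$. I would begin by expanding both perturbed evolution operators as Dyson-type series based on the common $T(\cdot,\cdot)$, namely $T_D(t,\tau) = \sum_{n=0}^\infty T_{D,n}(t,\tau)$ with $T_{D,0}(t,\tau) = T(t,\tau)$ and $T_{D,n}(t,\tau)x = \int_\tau^t T(t,s) D(s) T_{D,n-1}(s,\tau) x\, ds$, and analogously for $T_{D_i,n}$. As in Lemma \ref{per lemma}, a routine induction gives the uniform bound $\|T_{D_i,n}(t,\tau)\|,\ \|T_{D,n}(t,\tau)\| \leq \lambda_T (\lambda_T \lambda_D)^n (b-a)^n / n!$, so both series are absolutely summable in operator norm, uniformly in $(t,\tau) \in \Gamma_{a}^{b}$ and $i$.

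Setting $d_{i,n}(t,\tau) := T_{D_i,n}(t,\tau) - T_{D,n}(t,\tau)$, I observe that $d_{i,0} \equiv 0$ (the zeroth-order terms coincide) and for $n \geq 1$,
\[
d_{i,n}(t,\tau) = \int_\tau^t T(t,s) D_i(s) d_{i,n-1}(s,\tau)\, ds + \int_\tau^t T(t,s)\bigl[D_i(s) - D(s)\bigr] T_{D,n-1}(s,\tau)\, ds.
\]
The crucial point — and where operator-norm convergence of $D_i - D$ replaces the strong convergence used in Lemma \ref{per lemma} — is that each integrand can now be estimated directly in operator norm. I would then prove $\|d_{i,n}(t,\tau)\| \to 0$ by induction on $n$: the second integral goes to zero because $\|D_i(s) - D(s)\| \to 0$ pointwise in $s$ and is dominated by $2\lambda_D$, so the dominated convergence theorem applies to the scalar integral of the operator-norm bound; the first integral goes to zero by the inductive hypothesis together with the uniform $n$-th order estimate above.

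The conclusion then follows from a further application of dominated convergence, this time to the series $\sum_n \|d_{i,n}(t,\tau)\|$, which is dominated in $i$ by the convergent numerical series $2\lambda_T \sum_n (\lambda_T \lambda_D)^n (b-a)^n / n!$, yielding $\|T_{D_i}(t,\tau) - T_D(t,\tau)\| \to 0$ as $i \to \infty$. The main obstacle is purely bookkeeping: no new idea beyond Lemma \ref{per lemma} is required, but one must be careful that the hypothesis on $D_i - D$ supplies both pointwise-in-$s$ smallness and a uniform bound, which is exactly what the assumption $\sup_i\{\|D_i\|_\infty, \|D\|_\infty\} \leq \lambda_D$ guarantees.
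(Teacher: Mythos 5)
Your proposal is correct and follows essentially the same route as the paper: the paper likewise specializes Lemma \ref{per lemma} to $T_i=T$ (so the zeroth-order difference and the third integral term vanish), bounds $d_{i,n}$ directly in operator norm using the pointwise convergence $\Vert D_i(s)-D(s)\Vert\to 0$ with the uniform bound $\lambda_D$, and concludes by two applications of the dominated convergence theorem. Your write-up is in fact slightly more careful in the bookkeeping (e.g.\ the explicit term-wise domination of the series and the correct index $T_{D,n-1}$ in the recursion), but no new idea is involved.
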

\begin{proof}
From the assumptions, let $T_i=T$ in Lemma \ref{per lemma}, replace \eqref{diff} by
\begin{eqnarray*}
\Vert d_{i,n}(t,\tau)\Vert &\leqslant& \int_\tau^t\Vert T(t,s)\Vert \Vert D_{i}(s)\Vert \Vert d_{i,n-1}(t,\tau)\Vert ds\\
&&+\int_\tau^t\Vert T(t,s)\Vert\Vert D_i(s)-D(s)\Vert\Vert T_{D,n}(s,\tau)\Vert ds \longrightarrow 0, \quad  i\rightarrow \infty.
\end{eqnarray*}

Then, we can prove the uniform convergence of $T_{D_i}(t,\tau)$ by the dominated convergence theorem in the similar way with Lemma \ref{per lemma}.
\end{proof}

\begin{theorem}\label{ex nuclear}
We consider the time-varying system \eqref{system} with the location-dependent input operators and the cost functional \eqref{cost}. Assume $\{B_r\}_{r\in\Omega^m}$ 
satisfies $\lim_{r\rightarrow r_0}\Vert B_r-B_{r_0}\Vert_\infty=0$, for some $r_0\in \Omega^m$,  $U$ and $Y$ are finite-dimensional and $G$ is a nuclear operator, then
\begin{equation*}
\lim_{r\rightarrow r_0}\Vert \Pi_r(t)-\Pi_{r_0}(t)\Vert_1=0,  \quad  t\in[t_0,b]
\end{equation*}
and there exists an optimal location $\hat{r}$ such that
\begin{equation*}
\hat{\ell}_1(t)=\Vert\Pi_{\hat{r}}(t_0)\Vert_1=\inf_{r\in \Omega^m}\Vert\Pi_r(t_0)\Vert_1.
\end{equation*}
\end{theorem}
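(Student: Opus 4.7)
The plan is to exploit the nuclearity decomposition from Corollary \ref{coro} together with the operator-norm continuity already in hand from Theorem \ref{ex operator}. Since $U$ is finite-dimensional, every $B_r(s)\colon U\to X$ has finite rank and is therefore compact, so Theorem \ref{ex operator} yields $\Vert\Pi_r(t)-\Pi_{r_0}(t)\Vert\to 0$; inspecting that proof I also inherit the pointwise convergence $\Vert L_r(s)-L_{r_0}(s)\Vert\to 0$ together with a uniform bound $\Vert L_r\Vert_{\infty}\le\lambda_{L}$. Setting $D_r(s):=-B_r(s)L_r(s)$, one then has pointwise norm convergence $\Vert D_r(s)-D_{r_0}(s)\Vert\to 0$ with a uniform $L^{\infty}_{s}$ bound, so Corollary \ref{per corollary} delivers $\Vert T_{L_r}(t,\tau)-T_{L_{r_0}}(t,\tau)\Vert\to 0$ in operator norm for every $(t,\tau)\in\Gamma_{a}^{b}$.

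By Corollary \ref{coro}, for every $r$ one has the representation
$$\Pi_r(t_0)=T_{L_r}^{\ast}(b,t_0)\,G\,T_{L_r}(b,t_0)+\mathcal{C}_{t_0,r}^{\ast}\,\mathcal{C}_{t_0,r},$$
where $\mathcal{C}_{t_0,r}\colon X\to L^{2}(t_0,b;U\times Y)$ acts as $(\mathcal{C}_{t_0,r}x)(s)=M_r(s)T_{L_r}(s,t_0)x$ with $M_r(s)=\bigl(\begin{smallmatrix}C(s)\\ F^{1/2}(s)L_r(s)\end{smallmatrix}\bigr)$. The first summand converges in nuclear norm because $G$ is nuclear and the two-sided ideal inequality $\Vert AKB\Vert_{1}\le\Vert A\Vert\,\Vert K\Vert_{1}\,\Vert B\Vert$ applies; combined with the operator-norm convergence of $T_{L_r}(b,t_0)$, a single triangle estimate yields $\Vert T_{L_r}^{\ast}(b,t_0)GT_{L_r}(b,t_0)-T_{L_{r_0}}^{\ast}(b,t_0)GT_{L_{r_0}}(b,t_0)\Vert_{1}\to 0$.

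For the second summand the strategy is to show $\Vert\mathcal{C}_{t_0,r}-\mathcal{C}_{t_0,r_0}\Vert_{HS}\to 0$, after which the bound $\Vert A^{\ast}A-B^{\ast}B\Vert_{1}\le\Vert A-B\Vert_{HS}\bigl(\Vert A\Vert_{HS}+\Vert B\Vert_{HS}\bigr)$ finishes the job. This is where I expect the main technical difficulty: passing from pointwise operator-norm convergence of the integrand $M_r(s)T_{L_r}(s,t_0)$ to Hilbert--Schmidt norm convergence of the whole operator $\mathcal{C}_{t_0,r}$. Here the finite-dimensionality of $U\times Y$ is indispensable, since any bounded operator with range of dimension at most $d$ satisfies $\Vert\cdot\Vert_{HS}\le\sqrt{d}\,\Vert\cdot\Vert$, yielding
$$\Vert\mathcal{C}_{t_0,r}-\mathcal{C}_{t_0,r_0}\Vert_{HS}^{2}\le d\int_{t_0}^{b}\bigl\Vert M_r(s)T_{L_r}(s,t_0)-M_{r_0}(s)T_{L_{r_0}}(s,t_0)\bigr\Vert^{2}ds,$$
whose integrand tends to $0$ pointwise in $s$ (via the operator-norm convergence of $L_r(s)$ and of $T_{L_r}(s,t_0)$) and is uniformly bounded, so dominated convergence closes the argument.

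Finally, since $r_0\in\Omega^{m}$ was arbitrary, $r\mapsto\Pi_r(t_0)$ is continuous from the compact set $\Omega^{m}$ into the Banach space of nuclear operators, so $r\mapsto\Vert\Pi_r(t_0)\Vert_{1}$ is continuous by the reverse triangle inequality and attains its infimum at some $\hat r\in\Omega^{m}$.
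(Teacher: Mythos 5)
Your proposal is correct and follows essentially the same route as the paper: the decomposition $\Pi_r(t_0)=T_{L_r}^*(b,t_0)GT_{L_r}(b,t_0)+\mathcal C_{t_0,r}^*\mathcal C_{t_0,r}$ from Corollary \ref{coro}, operator-norm continuity of $\Pi_r$, $L_r$ and $T_{L_r}$ via Theorem \ref{ex operator} and Corollary \ref{per corollary}, Hilbert--Schmidt convergence of $\mathcal C_{t_0,r}$ by dominated convergence exploiting the finite-dimensionality of $U\times Y$, and nuclear-norm triangle estimates plus compactness of $\Omega^m$. The only cosmetic difference is that you bound the fiberwise Hilbert--Schmidt norm by $\sqrt{d}$ times the operator norm instead of expanding it exactly in an orthonormal basis of $U\times Y$ as the paper does, which changes nothing of substance.
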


\begin{proof}
Similar to Theorem \ref{ex operator}, there exists $\delta >0$ such that
$
\sup_{r\in \mathbf B(r_0, \delta)}\Vert B_r\Vert<\infty,  r_0\in \Omega^m
$
and for every $x\in X$ and $t\in [t_0,b]$,
\begin{equation*}
\Pi_r(t)x\rightarrow \Pi_{r_0}(t)x,\quad r\rightarrow r_0.
\end{equation*}

Further, from \eqref{Pi bound}, we have $\Pi_r$ are uniformly bounded with $\lambda_\Pi$ for any $t\in[t_0,b]$ and $r\in \mathbf B(r_0, \delta)$.

Defining the operator $\mathcal C_{t,r}: X\rightarrow L^2(t,b; U\times Y)$, $t\in[t_0,b]$,
\begin{equation}\label{definition C}
(\mathcal C_{t,r}x(t))(\cdot) =\left(\begin{array}{cc}
                    C(\cdot)\\
                    -F^{\frac{1}{2}}(\cdot)B_r^*(\cdot)\Pi_r(\cdot)
                  \end{array}\right)T_{L,r}(\cdot, t)x(t).
\end{equation}
Corollary \ref{coro} has shown $\mathcal C_{t,r}$ is a Hilbert-Schmidt operator and
\begin{equation*}
\Pi_r(t)= T_{L,r}^*(b,t)GT_{L,r}(b,t)+\mathcal C_{t,r}^*\mathcal C_{t,r}.
\end{equation*} is nuclear if $G$ is nuclear.

Now let us show that $\mathcal C_{t,r}$ uniformly converges to $\mathcal C_{t,r_0}$ in Hilbert-Schmidt norm. Let $\{e_i\}_{i=1}^{p+q}$ and $\{\bar{e}_i\}_{i=1}^{\infty}$ be respectively the orthogonal basis of $U\times Y$ and $X$, then
\allowdisplaybreaks
\begin{eqnarray*}
&&\Vert\mathcal C_{t,r}-\mathcal C_{t,r_0}\Vert_{HS}\\
&=&\sum_{i=1}^{\infty}\int_{t}^{b}\sum_{j=1}^{p+q}\langle(\mathcal C_{t,r}\bar{e}_i)(s)-(\mathcal C_{t,r_0}\bar{e}_i)(s),e_j\rangle_{U\times Y} ds\\
&=&\sum_{i=1}^{\infty}\int_{t}^{b}\sum_{j=1}^{p+q}\vert\langle\bar{e}_i,
                        T_{L,r}^*(s, t)[C^*(s),L_r^*(s)F^{\frac{1}{2}}(s)]e_j\\
                        &&-T_{L,r_0}^*(s, t)[C^*(s),L_{r_0}^*(s)F^{\frac{1}{2}}(s)]e_j
                   \rangle_{X}\vert^2ds\\
&=&\int_{t}^{b}\sum_{i=1}^{\infty}\sum_{j=1}^{p+q}\vert\langle\bar{e}_i,
                       T_{L,r}^*(s, t)[C^*(s), L_r^*(s)F^{\frac{1}{2}}(s)]e_j\\&&-T_{L,r_0}^*(s, t)[C^*(s), L_{r_0}^*(s)F^{\frac{1}{2}}(s)]e_j
                   \rangle_{X}\vert^2ds\\
&=&\sum_{j=1}^{p+q}\int_{t}^{b}\Vert T_{L,r}^*(s, t)[C^*(s), L_r^*(s)F^{\frac{1}{2}}(s)]e_j-T_{L,r_0}^*(s, t)[C^*(s), L_{r_0}^*(s)F^{\frac{1}{2}}(s)]e_j\Vert_X^2ds,
\end{eqnarray*}
where $L_r=F^{-1}B_r^*\Pi_r$.

From Theorem \ref{ex operator}, we have
$
\lim_{ r\rightarrow r_0}\Vert L_r(t)-L_{r_0}(t)\Vert= 0
$
and $\Vert L_r\Vert_\infty<\infty$.
Then,
$$
\lim_{ r\rightarrow r_0}\Vert B_r(t)L_r(t)-B_{r_0}(t)L_{r_0}(t)\Vert=0
$$
and
$
\Vert B_rL_r\Vert_\infty <\infty.
$
Hence, from Corollary \ref{per corollary}, for any $(s,t)\in \Gamma_{t_0}^{b}$,
$T_{L,r}(s, t)$ uniformly converges to $T_{L, r_0}(s, t)$. Therefore,
\begin{eqnarray}\label{inequality}
&&\Vert T_{L,r}^*(s, t)[C^*(s), L_r^*(s)F^{\frac{1}{2}}(s)]e_j-T_{L,r_0}^*(s, t)[C^*(s), L_{r_0}^*(s)F^{\frac{1}{2}}(s)]e_j\Vert_X\nonumber\\
&\leqslant&\Vert (T_{L,r}^*(s, t)-T_{L,r_0}(s, t)^*)[C^*(s),L_r^*(s)F^{\frac{1}{2}}(s)]e_j]\Vert\nonumber\\
&&+\Vert T_{L,r_0}^*(s, t)[0,(L_r^*(s)-L_{r_0}^*(s))F^{\frac{1}{2}}(s)]e_j]\Vert\longrightarrow0, \quad  r\rightarrow r_0.
\end{eqnarray}

By dominated convergence theorem,
$
\Vert\mathcal C_{t,r}-\mathcal C_{t,r_0}\Vert_{HS}\rightarrow 0, \quad r\rightarrow r_0.
$
Further, if $G$ is a nuclear operator,
\begin{align*}
&\Vert\Pi_r(t)-\Pi_{r_0}(t)\Vert_1\\&\leqslant\Vert T_{L,r}^*(b,t)-T_{L,r_0}^*(b,t)\Vert \Vert GT_{L,r}(b,t)\Vert_1+\Vert T_{L,r_0}^*(b,t)G\Vert_1\Vert T_{L,r}(b,t)-T_{L,r_0}(b,t)\Vert\\
&+\Vert\mathcal C_{t,r}^*-C_{t,r_0}^*\Vert_{HS}\Vert\mathcal C_{t,r}\Vert_{HS}+\Vert C_{t,r_0}^*\Vert_{HS}\Vert\mathcal C_{t,r}-C_{t,r_0}\Vert_{HS}\rightarrow 0, \ r\rightarrow r_0.
\end{align*}

By the compactness of $\Omega^m$, the optimal location $\hat{r}$ exists in nuclear norm.
\end{proof}

\section{Convergence of optimal control locations}\label{approximation of opt control}

In practice, the integral Riccati equation in an infinite-dimensional space cannot be solved directly. Usually, we approximate and solve it in finite-dimensional space by a sequence of approximations from various numerical methods. Let ${X_n}$ be a family of finite-dimensional subspaces of $X$ and $\mathbf P_n$ be the corresponding orthogonal projection of $X$ onto $X_n$. The finite-dimensional spaces $\{X_n\}$ inherit the norm from
$X$. For every $n\in\mathds N$, let $T_n(\cdot, \cdot)$ be a mild evolution operator on $X_n$, $B_n(t)\in L^\infty_s(t_0, b;U,X_n)$ and $C_n(t)=C(t)\mathbf P_n$, $G_n\in \mathcal L(X_n)$.
This defines a sequence of approximations
\begin{equation*}
x(t)=T_n(t,t_0)x(t_0)+\int_{t_0}^{t}T_n(t,s)B_n(s)u(s)ds, \quad t\in [t_0, b]
\end{equation*}
with the cost functional
\begin{equation*}
J_n(t,x,u)=\langle x(b), G_nx(b)\rangle+\int_{t}^{b}\langle C_n(s)x(s), C_n(s)x(s)\rangle+\langle u(s), F(s)u(s)\rangle ds.
\end{equation*}

We denote the optimal control of the approximation by $u_n(t)=-L_n(t)\mathbf P_nx(t)$, $t\in [t_0,b]$, where $L_n(t)=F^{-1}B_n^*\Pi_n$, the perturbed evolution operator of $T_n(\cdot,\cdot)$ by $-B_nL_n$ by $T_{L_n}(\cdot,\cdot)$ and the Riccati operator of the approximation by $\Pi_n$.

In order to guarantee that $\Pi_n(t)$ converges to $\Pi(t)$, the following assumptions are needed in the approximation of control problem for partial differential equations \cite{Gibson79}. For each $x\in X$, $u\in U$, $y\in Y$, when $n\rightarrow\infty$,\\
(a1) $(i)\quad  T_n(t,s)\mathbf P_nx\rightarrow T(t,s)x$; \qquad $(ii)\quad  T_n^*(t,s)\mathbf P_nx\rightarrow T^*(t,s)x$\\
and $\sup_{n}\Vert T_n(t,s)\Vert<\infty$, $(t,s)\in \Gamma_{t_0}^{b}$.\\
(a2) $(i)\quad  B_n(t)u\rightarrow B(t)u;$ \qquad \qquad\quad $(ii)\quad B_n^*(t)\mathbf P_nx\rightarrow B^*(t)x, \  a.e..$\\
(a3) $(i)\quad C_n(t)\mathbf P_nx\rightarrow C(t)x;$ \qquad \qquad $(ii)\quad C_n^*(t)y\rightarrow C^*(t)y, \  a.e..$ \\
(a4) $\sup_{n}\Vert G_n\Vert<\infty$ and
$
G_n\mathbf P_nx\rightarrow Gx.
$

Before we study the uniform convergence from $\Pi_n(t)$ to $\Pi(t)$, we study under which condition the compactness of $\Pi(t)$ can be guaranteed. The following lemma shows this.
\begin{lemma}
We consider the time-varying system \eqref{system} with the cost functional \eqref{cost}. If $B(t)$, $C(t)$, $t\in[t_0,b]$ and $G$ are compact operators, then the unique solution $\Pi(t)$ of the integral Riccati equation \eqref{ire2} is compact.
\end{lemma}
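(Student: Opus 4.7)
The plan is to mimic the decomposition from Corollary~\ref{coro}: rewrite the second integral Riccati equation as $\Pi(t) = T_\Pi^*(b,t) G T_\Pi(b,t) + \mathcal{C}_t^* \mathcal{C}_t$, where $\mathcal{C}_t : X \to L^2(t, b; U \times Y)$ sends $x$ to the $L^2$ function $s \mapsto \bigl(C(s) T_\Pi(s,t) x,\ F^{-1/2}(s) B^*(s)\Pi(s) T_\Pi(s,t) x\bigr)$. This is exactly the operator from Corollary~\ref{coro}, since $F^{1/2} L = F^{-1/2} B^* \Pi$. In that corollary, finite-dimensionality of $U, Y$ made $\mathcal{C}_t$ Hilbert--Schmidt; here, for arbitrary Hilbert $U$ and $Y$, the compactness of $B$ and $C$ should still deliver compactness (but not necessarily Hilbert--Schmidtness) of $\mathcal{C}_t$, which is all that is needed.

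The first summand $T_\Pi^*(b,t) G T_\Pi(b,t)$ is compact since $G$ is compact and the evolution operators are bounded. For the second summand, I would first observe that for a.e.~$s \in [t_0,b]$ the operator $M(s) : X \to U \times Y$ defined by $M(s) x = \bigl(C(s) T_\Pi(s,t) x,\ F^{-1/2}(s) B^*(s) \Pi(s) T_\Pi(s,t) x\bigr)$ is compact: each component is of the form (compact)(bounded), because $C(s)$ is compact by hypothesis, $B^*(s)$ is compact since $B(s)$ is, and the remaining factors $F^{-1/2}, \Pi, T_\Pi$ are bounded (in particular $\Vert\Pi\Vert_\infty < \infty$ was established in the proof of Theorem~\ref{ex operator}).

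The main step, which I expect to be the main obstacle, is to upgrade pointwise compactness of $M(s)$ to compactness of the integrated operator $\mathcal{C}_t$. I would use complete continuity in a Hilbert space: given any bounded sequence $(x_n) \subset X$, reflexivity provides a weakly convergent subsequence $x_{n_k} \rightharpoonup x$. For each $s$, compactness of $M(s)$ gives $\Vert M(s)(x_{n_k} - x)\Vert \to 0$, and this quantity is uniformly dominated by the constant $\Vert M\Vert_\infty \sup_k \Vert x_{n_k}-x\Vert$, which is integrable on $[t,b]$. The dominated convergence theorem then yields $\Vert \mathcal{C}_t x_{n_k} - \mathcal{C}_t x\Vert_{L^2}^2 = \int_t^b \Vert M(s)(x_{n_k}-x)\Vert^2 ds \to 0$, so $\mathcal{C}_t$, and hence $\mathcal{C}_t^* \mathcal{C}_t$, is compact. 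Summing the two compact terms yields compactness of $\Pi(t)$.
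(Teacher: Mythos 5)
Your proof is correct, but the key step is handled differently from the paper. Both arguments start from the same decomposition of the second IRE, $\Pi(t)=T_L^*(b,t)GT_L(b,t)+\int_t^b T_L^*(s,t)\bigl[C^*(s)C(s)+L^*(s)F(s)L(s)\bigr]T_L(s,t)\,ds$ (your $T_\Pi$ is the paper's $T_L$), and both dispose of the boundary term by compactness of $G$. For the integral term, however, the paper does not factor it as $\mathcal C_t^*\mathcal C_t$: it observes that the operator-valued integrand $T_L^*(s,t)S(s)T_L(s,t)$ is compact for each $s$, picks orthogonal projections $\mathbf P_n$ onto finite-dimensional subspaces converging strongly to $I$ (so that $\Vert \mathbf P_nK-K\Vert\to 0$ for every compact $K$), and uses uniform boundedness plus dominated convergence to show that the finite-rank operators $\int_t^b\mathbf P_nT_L^*(s,t)S(s)T_L(s,t)\,ds$ converge in operator norm to the integral, which is therefore compact. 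You instead extend the factorization of Corollary \ref{coro} to infinite-dimensional $U$ and $Y$, replacing Hilbert--Schmidtness of $\mathcal C_t$ by compactness, and prove compactness of $\mathcal C_t$ by complete continuity: a weakly convergent subsequence $x_{n_k}\rightharpoonup x$, pointwise norm convergence $\Vert M(s)(x_{n_k}-x)\Vert\to 0$ from compactness of $M(s)$ (using that $B^*(s)$ is compact and $F^{-1/2}$, $\Pi$, $T_L$ are uniformly bounded), and dominated convergence in $s$. Both routes are sound and of comparable length; yours makes the analogy with the nuclear case (Corollary \ref{coro}) explicit and avoids choosing a projection sequence, while the paper's argument works directly with the operator-valued integrand, exhibits the integral as a norm limit of finite-rank operators, and does not need the square root $F^{-1/2}$ or the closed-loop feedback rewriting. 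Only cosmetic points remain on your side: strong measurability of $s\mapsto M(s)y$ (which justifies the measurability needed for the dominated convergence step) should be noted, and the uniform bound $\Vert\Pi\Vert_\infty<\infty$ for the single system follows exactly as in the uniform-boundedness argument of Theorem \ref{ex operator}.
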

\begin{proof}
Denote $S=C^*C+\Pi BF^{-1}B^*\Pi$,
\begin{equation*}
\Pi(t)=T_L^*(b,t)GT_L(b,t)+\int_t^{b}T_L^*(s,t)S(s)T_L(s,t)ds.
\end{equation*}

Since $B(t)$, $C(t)$ and $G$ are compact,
$T_L^*(b,t)GT_L(b,t)$
and
$T_L^*(s,t)S(s)T_L(s,t)$, $(s,t)\in \Gamma_{t_0}^{b}$
are compact.
Let us only consider the integral part of $\Pi(t)$ firstly. It is clear that there exists a set of orthogonal projections $\{\mathbf P_n\}$ to some finite-dimensional spaces $X_n$, $n\in \mathds N$ such that
$
\lim_{n\rightarrow\infty}\Vert \mathbf P_nT_L^*(s,t)S(s)T_L(s,t)-T_L^*(s,t)S(s)T_L(s,t)\Vert=0.
$
Then, since $T_L(\cdot,\cdot)$ and $S(\cdot)$ are uniformly bounded, it is easy to obtain $\mathbf P_nT_L^*ST_L$ is also uniformly bounded in any time and $n$.
By the dominated convergence theorem,
\begin{equation*}
\lim_{n\rightarrow\infty}\Vert \int_{t}^{b}\mathbf P_nT_L^*(s,t)S(s)T_L(s,t)ds-\int_t^{b}T_L^*(s,t)S(s)T_L(s,t)ds\Vert=0.
\end{equation*}

Obviously, $\int_{t}^{b}\mathbf P_nT_L^*(s,t)S(s)T_L(s,t)ds$ is still finite-rank operator and bounded, so it is compact.

Therefore,
$\int_t^{b}T_L^*(s,t)S(s)T_L(s,t)ds$ is compact. Further, $\Pi(t)$ is compact.
\end{proof}

The following theorem shows the uniform convergence of $\Pi_n(t)$.
\begin{theorem}\label{approximation convergence}
For the sequence of approximations under the assumptions $(a1)-(a4)$, if $B(t)$, $C(t)$, $t\in[t_0,b]$ and $G$ are compact operators and $\lim_{n\rightarrow \infty}\Vert B_n-\mathbf P_n B\Vert_\infty=0$, then
$$\lim_{n\rightarrow \infty}\Vert \Pi_n(t)\mathbf P_n-\Pi(t)\Vert=0, \quad t\in[t_0,b].$$
\end{theorem}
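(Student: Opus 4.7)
My plan is to piggy-back on the strong convergence coming from \cite[Theorem 5.1]{Gibson79} (which, together with the uniform boundedness $\sup_n\|\Pi_n\|_\infty<\infty$, is available under the assumptions (a1)--(a4)), and then to use the compactness of $B(t)$, $C(t)$, $G$ to upgrade strong convergence to norm convergence via the standard trick: a compact operator sandwiched between strongly convergent, uniformly bounded factors whose adjoints also converge strongly yields a norm-convergent product. Because (a1)(ii) already provides strong convergence of $T_n^*(t,s)\mathbf P_nx\to T^*(t,s)x$, the cleaner route is to use the \emph{first} integral Riccati equation \eqref{ire} (rather than \eqref{ire2}), so I can work with the original evolutions $T_n(\cdot,\cdot)$, $T(\cdot,\cdot)$ directly and avoid invoking Lemma~\ref{per lemma} for the perturbed semigroups.

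First I would upgrade the approximation hypothesis from strong to norm convergence for the data. Compactness of $B(t)$ and $P_n\to I$ strongly give $\|\mathbf P_nB(t)-B(t)\|\to 0$, so the assumption $\|B_n-\mathbf P_n B\|_\infty\to 0$ yields $\|B_n-B\|_\infty\to 0$ and, by taking adjoints, $\|B_n^*-B^*\|_\infty\to 0$. Because $C_n(t)=C(t)\mathbf P_n$ and $C(t)$ is compact, the same argument applied to $C^*$ gives $\|C_n-C\|_\infty\to 0$ and $\|C_n^*-C_n^*\|_\infty\to 0$. Using also the strong convergence $\Pi_n(t)\mathbf P_n\to\Pi(t)$, I would then show that the zeroth-order coefficient
\[
\Theta_n(s):=C_n^*(s)C_n(s)-\Pi_n(s)B_n(s)F^{-1}(s)B_n^*(s)\Pi_n(s)
\]
converges to $\Theta(s):=C^*(s)C(s)-\Pi(s)B(s)F^{-1}(s)B^*(s)\Pi(s)$ \emph{in norm} for every $s$. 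The $C^*C$-piece is immediate from the norm convergence of $C_n$; for the $\Pi B F^{-1}B^*\Pi$-piece I would write $K:=BF^{-1}B^*$, $K_n:=B_nF^{-1}B_n^*$, use $\|K_n-K\|_\infty\to 0$, and split $\Pi_n K_n\Pi_n\mathbf P_n-\Pi K\Pi$ into pieces of the form $(\Pi_n\mathbf P_n-\Pi)K\Pi$, $\Pi K(\Pi_n\mathbf P_n-\Pi)$, $(\Pi_n\mathbf P_n-\Pi)K(\Pi_n\mathbf P_n-\Pi)$ and $\Pi_n(K_n-K)\Pi_n\mathbf P_n$; since $K$ is compact (product with a compact $B$), each of the first three pieces converges to $0$ in norm by the standard "strong $+$ compact" argument, and the last by norm convergence of $K_n$ with uniformly bounded $\Pi_n$. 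The previous lemma already guarantees that $\Pi(t)$ and hence $\Theta(s)$ is compact.

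Next I would subtract the two first IREs to obtain
\[
\Pi_n(t)\mathbf P_n-\Pi(t)=\bigl[T_n^*(b,t)G_nT_n(b,t)\mathbf P_n-T^*(b,t)GT(b,t)\bigr]+\int_t^b\bigl[T_n^*(s,t)\Theta_n(s)T_n(s,t)\mathbf P_n-T^*(s,t)\Theta(s)T(s,t)\bigr]ds
\]
and treat each bracketed expression by the decomposition
\[
T_n^*\Theta_nT_n\mathbf P_n-T^*\Theta T=T_n^*(\Theta_n-\Theta)T_n\mathbf P_n+(T_n^*-T^*)\Theta T_n\mathbf P_n+T^*\Theta(T_n\mathbf P_n-T).
\]
The first summand is controlled in norm by $\lambda_T^2\|\Theta_n(s)-\Theta(s)\|\to0$. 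For the second and third summands I use that $\Theta(s)$ is compact together with strong convergence of $T_n\mathbf P_n\to T$ \emph{and} $T_n^*\mathbf P_n\to T^*$ from (a1): the standard adjoint argument (uniform convergence on the precompact image $\Theta(B_X)$) gives $\Theta(T_n\mathbf P_n-T)\to 0$ and $(T_n^*-T^*)\Theta\to 0$ in operator norm, hence both composed terms vanish in norm. The terminal bracket is handled identically with $G$ in place of $\Theta$. A uniform bound on the integrand (via the uniform bounds on $T_n$, $T$, $\Theta_n$, $\Theta$) then lets the dominated convergence theorem deliver the norm convergence of the integral, and the proof concludes.

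The main obstacle I anticipate is the bookkeeping in showing the norm convergence $\Theta_n\to\Theta$: this requires passing from the merely strong convergence $\Pi_n\mathbf P_n\to\Pi$ to a norm statement, which is possible only because of the compactness of $B$ and because the quadratic in $\Pi$ sits between \emph{two} copies of $\Pi$, allowing each strong-convergence factor to be absorbed by the compact kernel $K$ on the neighbouring side. Without the compactness hypotheses on $B$, $C$ (and the compactness of $\Pi(t)$ obtained in the previous lemma) this upgrade would fail, which is exactly why the compactness assumption is essential in the theorem.
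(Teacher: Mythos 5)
Your proposal is correct in substance but takes a genuinely different route from the paper. The paper works with the second IRE \eqref{ire2}: it writes $\Pi_n$ and $\Pi$ in closed-loop form with the perturbed evolution operators $T_{L_n}$, $T_L$, proves $\Vert L_n(t)\mathbf P_n-L(t)\Vert\rightarrow 0$ (the same compactness-of-$B$ upgrade you use), invokes Lemma \ref{per lemma} to transfer the strong convergences of (a1) to $T_{L_n}(\cdot,\cdot)\mathbf P_n$ and its adjoints, and then converts strong to norm convergence through the compact factors $G$ and $S_n=C_n^*C_n+\Pi_nB_nF^{-1}B_n^*\Pi_n$ before applying dominated convergence. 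You instead stay with the first IRE \eqref{ire}, so the outer factors are the original $T_n$, $T$, for which (a1) directly supplies both strong convergences, and the entire compactness upgrade is concentrated in the coefficient $\Theta_n=C_n^*C_n-\Pi_nB_nF^{-1}B_n^*\Pi_n$, where self-adjointness of $\Pi_n\mathbf P_n$ lets you absorb the strongly convergent factors on either side of the compact kernel $K=BF^{-1}B^*$; this buys independence from Lemma \ref{per lemma} and from any property of the closed-loop evolutions, at the cost of routine bookkeeping with projections (since $T_n^*$, $\Theta_n$, $B_n^*$ act on $X_n$, the cross terms should read $(T_n^*\mathbf P_n-T^*)\Theta T_n\mathbf P_n$, $\Vert\Theta_n\mathbf P_n-\Theta\Vert$, and so on). Two caveats. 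First, the convergences $\Vert B_n(t)-B(t)\Vert$, $\Vert C_n(t)-C(t)\Vert$, $\Vert\Theta_n(t)\mathbf P_n-\Theta(t)\Vert$ are pointwise in $t$, not uniform (compactness of $B(t)$ gives $\Vert\mathbf P_nB(t)-B(t)\Vert\rightarrow 0$ for each fixed $t$ only), so you should not write $\Vert B_n-B\Vert_\infty\rightarrow 0$; pointwise convergence plus uniform bounds is all your dominated-convergence step needs, and it is also all the paper claims. Second, the terminal term is not handled \emph{identically} with $G$ in place of $\Theta$: for $\Theta_n$ you had norm convergence of the data $B_n$, $C_n$, whereas (a4) gives only strong convergence $G_n\mathbf P_nx\rightarrow Gx$, so the piece $T_n^*(G_n\mathbf P_n-G)T_n\mathbf P_n$ does not obviously vanish in operator norm. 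This soft spot is shared with the paper's own proof (its term $\Vert(G_n\mathbf P_n-G)T_{L_n}(b,t)\mathbf P_n\Vert$ is asserted to vanish merely by appeal to compactness of $G_n$ and $G$); it disappears if one takes the natural choice $G_n=\mathbf P_nG\mathbf P_n$, for which compactness of $G$ yields $\Vert G_n\mathbf P_n-G\Vert\rightarrow 0$, or adds the hypothesis $\Vert G_n\mathbf P_n-G\Vert\rightarrow 0$, as the nuclear-norm analogue Theorem \ref{approximation nuclear} explicitly does.
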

\begin{proof}
From $\lim_{n\rightarrow \infty}\Vert B_n-\mathbf P_nB\Vert_\infty=0$ and $\sup_{t\in[t_0,b]}\Vert B(t)\Vert<\infty$, we have
$$\sup_{n\in \mathds N, \\t\in[t_0,b]}\Vert B_n(t)\Vert<\infty.$$
Moreover, because $B(t)$ is compact and $\mathbf P_n$ is strongly convergent to the identity operator $I$,
$
\lim_{n\rightarrow \infty}\Vert\mathbf  P_nB(t)-B(t)\Vert=0,\ t\in[t_0,b].
$
Further,
\begin{equation*}
\Vert B_n(t)-B(t)\Vert\leqslant \Vert B_n(t)-\mathbf P_nB(t)\Vert+\Vert\mathbf P_nB(t)-B(t)\Vert\rightarrow 0, \quad t\in[t_0,b], \quad n\rightarrow\infty.
\end{equation*}

Meanwhile, by the uniform boundedness of $\Vert T_n(\cdot,\cdot)\Vert$, $\Vert C_n\Vert_\infty$ and $\Vert G_n\Vert$ and \cite[Theorem 5.1]{Gibson79},
for any $x\in X$,
\begin{equation*}
\lim_{n\rightarrow\infty}\Vert\Pi_n(t)x-\Pi(t)x\Vert=0,\quad t\in[t_0,b].
\end{equation*}

Similar to the proof of the uniform boundedness of $\Pi_r$ in Theorem \ref{ex operator}, for the approximations with arbitrary feedback control 
$\tilde u_n(t)=\tilde{L}_n(t)x(t)=\tilde{L}(t)\mathbf P_nx(t)$, $\tilde{L}\in L^\infty_s(t_0, b;X,U)$,
there exists $\lambda_\Pi>0$, such that
$
\sup_{n}\Vert \Pi_n\Vert_\infty<\lambda_\Pi.
$

To proof the uniform convergence of $\Pi_n(t)$, we define $S_n=C_{n}^*C_n+\Pi_nB_nF^{-1}B_{n}^*\Pi_n$ and $S$ with the similar way, then
\begin{align*}
&\Vert \Pi_n(t)\mathbf P_n-\Pi(t) \Vert\leqslant\Vert (T_{L_n}^*(b,t)-T_{L}^*(b,t))G_n\mathbf P_n\Vert\Vert T_{L_n}(b,t)\mathbf P_n\Vert\\
&+\Vert T_{L}^*(b,t)\Vert\Vert (G_n\mathbf P_n-G)T_{L_n}(b,t)\mathbf P_n\Vert+\Vert T_{L}^*(b,t)\Vert\Vert G (T_{L_n}(b,t)\mathbf P_n-T_L(b,t))\Vert\\
&+\int_t^{b}\Vert[T_{L_n}^*(s,t)-T_{L}^*(s,t)]S_n(s)\mathbf P_n\Vert \Vert T_{L_n}(s,t)\mathbf P_n\Vert ds\\
&+\int_t^{b}\Vert T_{L}^*(s,t)\Vert \Vert S_n(s)\mathbf P_n-S(s)\Vert \Vert T_{L_n}(s,t)\mathbf P_n\Vert ds\\
&+\int_t^{b}\Vert T_{L}(s,t)\Vert \Vert S(s)(T_{L_n}(s,t)\mathbf P_n-T_L(s,t))\Vert ds.
\end{align*}

As a result of the uniform boundedness of $\Vert T_n(\cdot,\cdot)\Vert$, $\Vert \Pi_n\Vert_\infty$ and $\Vert B_n\Vert_\infty$ in $n$, $\Vert L_n\Vert_\infty$ is uniform bounded
and
\begin{eqnarray*}
\Vert L_n^*(t)-\mathbf P_nL^*(t)\Vert&\leqslant&\Vert F^{-1}\Vert_\infty(\Vert \Pi_n(t)\Vert\Vert B_n(t)-\mathbf P_nB(t)\Vert\\
&&+\Vert\Pi_n(t)-\mathbf P_n\Pi(t)\Vert\Vert B(t)\Vert\longrightarrow 0, \quad  r\rightarrow r_0.
\end{eqnarray*}
so
$
\lim_{n\rightarrow\infty}\Vert L_n(t)\mathbf P_n-L(t)\Vert=0
$
and
$
\lim_{n\rightarrow\infty}\Vert B_n(t)L_n(t)\mathbf P_n-B(t)L(t)\Vert=0.
$

According to Lemma \ref{per lemma} and assumption (a1),
\begin{align*}
&\lim_{n\rightarrow\infty}\Vert T_{L_n}(t,s)\mathbf P_nx-T_L(t,s)x\Vert=0,\\
&\lim_{n\rightarrow\infty}\Vert T_{L_n}^*(t,s)\mathbf P_nx-T_L^*(t,s)x\Vert=0, \quad  x\in X.
\end{align*}
Finally, because of the compactness of the self-adjoint operator $G_n$ and $G$, we have
$$
\lim_{n\rightarrow\infty}\Vert(T_{L_n}(t,s)-T_L(t,s))G_n\mathbf P_n\Vert=0
$$
and
$
\lim_{n\rightarrow\infty}\Vert G(T_{L_n}(t,s)\mathbf P_n-T_L(t,s))\Vert=0.
$
Meanwhile,
$\Vert S_n\Vert_\infty\leqslant\Vert C_{n}^*C_n\Vert_\infty+\Vert\Pi_nB_nF^{-1}B_{n}^*\Pi_n\Vert_\infty<\infty,  \quad n\in\mathds N$.
Since $C_n=C\mathbf P_n$ is compact,
\begin{eqnarray*}
&&\Vert S_n(t)\mathbf P_n-S(t)\Vert \\&\leqslant& \Vert C_{n}^*(t)C_n(t)-C^*(t)C(t)\Vert+\Vert L_n^*(t)F(t)L_n(t)\mathbf P_n-L(t)F(t)L(t)\Vert\\
&\leqslant& \Vert C^*(t)\mathbf P_n-C^*(t)\Vert\Vert C_n\Vert_\infty+\Vert C^*\Vert_\infty\Vert C(t)\mathbf P_n-C(t)\Vert\\
&&+\Vert L_n^*(t)-L^*(t) \Vert \Vert F\Vert_\infty\Vert L_n\Vert_\infty+\Vert L^*\Vert_\infty\Vert F\Vert_\infty\Vert L_n(t)\mathbf P_n-L(t) \Vert\rightarrow 0,\quad n\rightarrow\infty.
\end{eqnarray*}

By dominated convergence theorem,
$
 \Vert \Pi_n(t)\mathbf P_n-\Pi(t) \Vert\longrightarrow 0,\quad n\rightarrow\infty.
$
\end{proof}

Next we show that the optimal control locations of approximations converge to the optimal control location of the original system.
\begin{theorem}\label{optimal convergence}
Under the assumptions $(a1)-(a4)$ and further assume $B_{r,n}=\mathbf P_nB_{r}$, $r\in\Omega^m$, if $B_r(t)$, $C(t)$ and $G$, $t\in[t_0,b]$ are compact operators 
and $\lim_{r\rightarrow r_0}\Vert B_{r}-B_{r_0}\Vert=0$,
then
\begin{equation*}
\hat{\ell}_n(t)\rightarrow \hat \ell(t), \quad  \hat{r}_n\rightarrow \hat r,\quad n\rightarrow\infty.
\end{equation*}
\end{theorem}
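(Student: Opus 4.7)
The plan is to combine the parametric continuity from Theorem \ref{ex operator} with the approximation result of Theorem \ref{approximation convergence} via an Arzel\`a--Ascoli argument on the compact parameter set $\Omega^m$. First I would verify existence of $\hat r_n$: since $B_{r,n}=\mathbf P_n B_{r}$ inherits the continuity $\|B_{r,n}-B_{r_0,n}\|_\infty\to 0$ from $\{B_r\}$ (with $\|\mathbf P_n\|\le 1$), Theorem \ref{ex operator} applied to the approximating system on $X_n$ yields continuity of $r\mapsto\|\Pi_{r,n}(t)\mathbf P_n\|$ on the compact set $\Omega^m$ and hence the existence of $\hat r_n$ attaining $\hat\ell_n(t)$. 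Existence of $\hat r$ is Theorem \ref{ex operator} itself.

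Set $f_n(r):=\|\Pi_{r,n}(t)\mathbf P_n\|$ and $f(r):=\|\Pi_r(t)\|$. Theorem \ref{approximation convergence}, applied at each fixed $r$, gives the pointwise convergence $f_n(r)\to f(r)$ on $\Omega^m$. The heart of the argument is to upgrade this to convergence uniform in $r$. I would do so by establishing equicontinuity of $\{f_n\}$: by compactness of $\Omega^m$ together with the hypothesis $\|B_r-B_{r_0}\|\to 0$ one has a finite bound $\lambda_B:=\sup_{r\in\Omega^m}\|B_r\|_\infty$, and the feedback-control estimate used at the start of the proof of Theorem \ref{ex operator} (taking $\tilde u\equiv 0$) yields $\lambda_\Pi:=\sup_{r\in\Omega^m,\,n}\|\Pi_{r,n}\|_\infty<\infty$, since all ingredients in that bound ($\sup_n\|T_n\|$, $\sup_n\|G_n\|$, $\|C\|_\infty$) are uniform in $n$ by $(a1)$ and $(a4)$. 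Re-running the estimates in the proof of Theorem \ref{ex operator} with these uniform constants in place of the pointwise ones shows that the modulus of continuity of $f_n$ can be chosen independent of $n$. Combined with pointwise convergence on the compact set, Arzel\`a--Ascoli delivers
\begin{equation*}
\sup_{r\in\Omega^m}|f_n(r)-f(r)|\longrightarrow 0,\quad n\to\infty.
\end{equation*}

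From uniform convergence, $\hat\ell_n(t)=\inf_r f_n(r)\to\inf_r f(r)=\hat\ell(t)$ is immediate. For the location, compactness of $\Omega^m$ lets me extract from any subsequence of $\{\hat r_n\}$ a further sub-subsequence $\hat r_{n_k}\to\tilde r$; then
\begin{equation*}
|f(\tilde r)-\hat\ell(t)|\le|f(\tilde r)-f(\hat r_{n_k})|+\sup_{r\in\Omega^m}|f(r)-f_{n_k}(r)|+|\hat\ell_{n_k}(t)-\hat\ell(t)|,
\end{equation*}
and continuity of $f$, the uniform convergence just proved, and $\hat\ell_n\to\hat\ell$ force $f(\tilde r)=\hat\ell(t)$. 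Hence every cluster point of $\{\hat r_n\}$ is an optimal location, giving the claimed $\hat r_n\to\hat r$ (a genuine limit when $\hat r$ is unique, otherwise in the sense of subsequential convergence to the set of optimizers). The main obstacle is the uniform-in-$r$ upgrade of Theorem \ref{approximation convergence}: one must confirm that the various $r$-dependent constants appearing in the proofs of Theorems \ref{ex operator} and \ref{approximation convergence} can be bounded uniformly on the compact set $\Omega^m$ and simultaneously uniformly in $n$; once this is done the remaining compactness bookkeeping is routine.
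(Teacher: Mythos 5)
The overall skeleton of your proposal (existence of $\hat r_n$, value convergence, cluster-point argument for the locations) is sound, but the step you yourself call the heart of the argument—equicontinuity of $f_n(r)=\Vert\Pi_{r,n}(t)\mathbf P_n\Vert$ with a modulus independent of $n$—is not justified by what you invoke, and this is a genuine gap. The proof of Theorem \ref{ex operator} is purely qualitative: operator-norm continuity of $r\mapsto\Pi_r(t)$ is obtained by combining the strong convergence $\Pi_r(t)x\to\Pi_{r_0}(t)x$ (Gibson's perturbation theorem), the compactness of the \emph{fixed} operator $B_{r_0}(t)$ to upgrade strong convergence to norm convergence of $(\Pi_r(t)-\Pi_{r_0}(t))B_{r_0}(t)$, and the dominated convergence theorem. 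None of these steps yields a modulus of continuity, so \emph{re-running the estimates with uniform constants} does not produce an $n$-independent modulus; worse, for the approximating systems the compact operators $B_{r_0,n}=\mathbf P_nB_{r_0}$ vary with $n$, so the compactness trick is not uniform in $n$ without an additional collective-compactness argument. To make your Arzel\`a--Ascoli route rigorous you would need a genuinely quantitative bound, e.g.\ a Gronwall estimate on the first IRE giving $\Vert\Pi_{r,n}(t)-\Pi_{r_0,n}(t)\Vert\leqslant C\Vert B_r-B_{r_0}\Vert_\infty$ with $C$ depending only on the $n$-uniform quantities $\sup_n\Vert T_n\Vert$, $\sup_n\Vert G_n\Vert$, $\lambda_\Pi$, $\Vert F^{-1}\Vert_\infty$ and $b-a$; such an estimate can be proved, but it is not contained in the theorems you cite and must be supplied.

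Note that the paper avoids uniform-in-$r$ convergence altogether. It shows $\limsup_n\hat\ell_n(t)\leqslant\hat\ell(t)$ by applying the pointwise approximation result at the fixed optimum $\hat r$, and for the reverse inequality extracts a subsequence with $\hat r_{n_k}\to\bar r$, observes $\Vert B_{\hat r_{n_k},n_k}-\mathbf P_{n_k}B_{\bar r}\Vert_\infty\to 0$, and uses the approximation convergence of Theorem \ref{approximation convergence} along this sequence to conclude $\Vert\Pi_{\hat r_{n_k},n_k}(t)-\Pi_{\bar r}(t)\Vert\to0$, whence $\liminf_n\hat\ell_n(t)\geqslant\Vert\Pi_{\bar r}(t)\Vert\geqslant\hat\ell(t)$. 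Your closing cluster-point argument for $\hat r_n\to\hat r$ (including the caveat that without uniqueness one only gets subsequential convergence to the set of minimizers) matches the paper's. So the conclusion stands, but as written your proof has a hole precisely at the equicontinuity step: either supply the Gronwall-type Lipschitz bound, or switch to the subsequence argument, which needs only pointwise-in-$r$ convergence.
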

\begin{proof}
From Theorem \ref{approximation convergence},  $\lim_{n\rightarrow \infty}\Vert \Pi_{r,n}(t)\mathbf P_n-\Pi_{r}(t)\Vert=0$
, $r\in\Omega^m$.

Since  $\lim_{r\rightarrow r_0}\Vert B_{r}-B_{r_0}\Vert_\infty=0$,
\begin{equation*}
\Vert B_{r,n}-B_{r_0,n}\Vert_\infty\leqslant\Vert \mathbf  P_n\Vert\Vert B_{r}-B_{r_0}\Vert_\infty\rightarrow0, \quad r\rightarrow r_0.
\end{equation*}

From Theorem \ref{ex operator}, for any $n\in \mathds N$, there exists $\hat l_n(t)=\inf_{r\in \Omega^m}\Vert \Pi_{r,n}(t)\Vert$.

On one hand,
\begin{eqnarray*}
\hat \ell_n(t)&=&\inf_{r\in \Omega^m}\Vert \Pi_{r,n}(t)\Vert\leqslant\Vert \Pi_{\hat r,n}(t)\Vert\leqslant\Vert \Pi_{\hat r,n}(t)-\Pi_{\hat r}(t)\Vert+\Vert \Pi_{\hat r}(t)\Vert\\
&\rightarrow& \Vert \Pi_{\hat r}(t)\Vert=\hat \ell(t), \quad n\rightarrow\infty,
\end{eqnarray*}
so
$\lim_{n\rightarrow \infty}\sup_{n}\hat l_n(t)\leqslant\hat l.$

On the other hand, there exists a subsequence $\{\hat \ell_{n_k}(t)\}$ such that $\lim_{k\rightarrow\infty}\hat \ell_{n_k}(t)= \lim_{n\rightarrow\infty}\inf_n \hat \ell_n(t),$
where $\hat \ell_{n_k}(t)=\inf_{r\in \Omega^m}\Vert\Pi_{r,n_k}(t)\Vert=\Vert\Pi_{r_{n_k},n_k}(t)\Vert.$
Due to the compactness of $\Omega^m$, without loss of the generality, we assume $\lim_{k\rightarrow\infty}\hat r_{n_k}=\bar r$,
\begin{equation*}
\Vert B_{\hat r_{n_k},n_k}-B_{\bar r}\Vert_\infty
\leqslant\Vert \mathbf P_{n_k}\Vert\Vert B_{\hat r_{n_k}}-B_{\bar r}\Vert_\infty+\Vert\mathbf  P_{n_k}B_{\bar r}-B_{\bar r}\Vert_\infty\longrightarrow 0,\quad k\rightarrow\infty
\end{equation*}
and
\begin{equation}\label{sublocation continuous}
\Vert \Pi_{\hat r_{n_k},n_k}(t)-\Pi_{\bar r}(t)\Vert\leqslant\Vert \Pi_{\hat r_{n_k},n_k}(t)-\Pi_{r_{n_k}}(t)\Vert+\Vert \Pi_{r_{n_k}}(t)-\Pi_{\bar r}(t)\Vert\\
\longrightarrow 0, \quad k\rightarrow\infty.
\end{equation}
Hence,
\begin{equation*}
\lim_{n\rightarrow \infty}\inf_{n}\hat \ell_n(t)=\lim_{k\rightarrow\infty}\hat \ell_{n_k}(t)=\lim_{k\rightarrow\infty}\Vert \Pi_{\hat r_{n_k},n_k}(t)\Vert=\Vert\Pi_{\bar r}(t)\Vert\geqslant\Vert\Pi_{\hat r}(t)\Vert=\hat \ell_r(t),
\end{equation*}
so
$\lim_{n\rightarrow\infty}\hat{\ell}_n(t)=\hat \ell(t).$
Further, $\lim_{n\rightarrow\infty}\hat{\ell}_n(t)=\lim_{n\rightarrow \infty}\inf_{n}\hat \ell_n(t)=\hat \ell(t)$, so
\begin{equation*}
\lim_{k\rightarrow\infty}\Vert \Pi_{\hat r_{n_k},n_k}(t)\Vert=\Vert\Pi_{\bar r}(t)\Vert=\Vert\Pi_{\hat r}(t)\Vert.
\end{equation*}
By the continuity with respect to $r_{n_k}$ in \eqref{sublocation continuous}, $\lim_{k\rightarrow\infty}\hat r_{n_k}=\hat r$.
\end{proof}

For the proof of the uniform convergence of the Riccati operators of the approximations in nuclear norm for stochastic systems, we need the following lemma.
\begin{lemma}
Let G be a nonnegative nuclear operator in a separable Hilbert space $X$ and assume that $T_n$ strongly converges to $T$, $T_n$, $T\in \mathcal L(X)$ are uniformly bounded by $\lambda_T$. Then
\begin{equation*}
\lim_{n\rightarrow\infty}\Vert (T_n-T)G\Vert_1=0.
\end{equation*}
\end{lemma}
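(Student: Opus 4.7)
The plan is to exploit the factorization of the nonnegative nuclear operator $G$ through its square root, reducing the nuclear-norm estimate to a Hilbert--Schmidt estimate that can be controlled by the strong convergence of $T_n$.

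First I would recall that since $G$ is nonnegative nuclear, its square root $G^{1/2}$ is well-defined, nonnegative and self-adjoint, and moreover $G^{1/2}$ is Hilbert--Schmidt with $\|G^{1/2}\|_{HS}^2 = \operatorname{tr}(G) = \|G\|_1 < \infty$. Using the spectral decomposition $G = \sum_k \lambda_k \langle \cdot, e_k\rangle e_k$ with $\lambda_k\geqslant 0$ and $\sum_k \lambda_k <\infty$, we have $G^{1/2}e_k = \lambda_k^{1/2} e_k$. The factorization $G = G^{1/2}\,G^{1/2}$ then gives
\begin{equation*}
\|(T_n - T)G\|_1 \;=\; \|(T_n-T)G^{1/2}\,G^{1/2}\|_1 \;\leqslant\; \|(T_n-T)G^{1/2}\|_{HS}\,\|G^{1/2}\|_{HS},
\end{equation*}
using the standard inequality $\|AB\|_1\leqslant \|A\|_{HS}\|B\|_{HS}$. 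Since $\|G^{1/2}\|_{HS}$ is a fixed finite constant, it suffices to show that $\|(T_n-T)G^{1/2}\|_{HS}\to 0$.

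Next I would compute the Hilbert--Schmidt norm in the eigenbasis $\{e_k\}$:
\begin{equation*}
\|(T_n-T)G^{1/2}\|_{HS}^2 \;=\; \sum_{k=1}^\infty \|(T_n-T)G^{1/2}e_k\|^2 \;=\; \sum_{k=1}^\infty \lambda_k\,\|(T_n-T)e_k\|^2.
\end{equation*}
By the strong convergence assumption, for every fixed $k$ we have $\|(T_n-T)e_k\|\to 0$ as $n\to\infty$. Moreover, by the uniform bound $\|T_n\|,\|T\|\leqslant \lambda_T$ we obtain the pointwise majorization $\lambda_k\|(T_n-T)e_k\|^2 \leqslant 4\lambda_T^2 \lambda_k$, and $\sum_k 4\lambda_T^2\lambda_k = 4\lambda_T^2 \|G\|_1 < \infty$. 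Therefore the dominated convergence theorem (for series, i.e.\ applied to counting measure on $\mathbb N$) yields $\sum_k \lambda_k\|(T_n-T)e_k\|^2 \to 0$, which combined with the inequality displayed above gives $\|(T_n-T)G\|_1\to 0$.

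I do not anticipate any serious obstacle here; the only point that requires a small argument is the sub-multiplicativity $\|AB\|_1\leqslant\|A\|_{HS}\|B\|_{HS}$, which is a classical fact about Schatten ideals and can simply be quoted. The essential ingredients are the trace-class property of $G$ (which supplies a summable eigenvalue sequence acting as a dominating envelope) and the strong convergence of $T_n$ on each eigenvector $e_k$; the uniform bound $\lambda_T$ is what permits the interchange of the limit with the infinite sum.
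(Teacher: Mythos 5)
Your proof is correct and follows essentially the same route as the paper: factor $G$ through its square root, bound $\Vert (T_n-T)G\Vert_1\leqslant \Vert (T_n-T)G^{1/2}\Vert_{HS}\Vert G^{1/2}\Vert_{HS}$, and use the uniform bound $\lambda_T$ together with strong convergence and dominated convergence to kill the Hilbert--Schmidt factor. The paper phrases the factorization via the polar decomposition $G=V\vert G\vert$, which for nonnegative $G$ reduces to exactly your $G^{1/2}G^{1/2}$ splitting, so the two arguments are the same in substance (and your version states the squared Hilbert--Schmidt sums more carefully).
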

\begin{proof}
Assume $\{e_i\}$ is the orthogonal basis in $X$ and there exist a partial isometry $V$ such that $G=V\vert G\vert,$
where $\vert G\vert=(G^*G)^\frac{1}{2}$, then,
\begin{eqnarray*}
\Vert (T_n-T)V\vert G\vert^{\frac{1}{2}}e_i\Vert\leqslant\Vert T_n-T\Vert\Vert V\vert G\vert^{\frac{1}{2}}e_i\Vert\leqslant2\lambda_T\Vert V\vert G\vert^{\frac{1}{2}}e_i\Vert.
\end{eqnarray*}
Because of the strong convergence of $T_n$, $\lim_{n\rightarrow\infty}\Vert (T_n-T)V\vert G\vert^{\frac{1}{2}}e_i\Vert=0$.

Since G is a nuclear operator, then $\vert G\vert^{\frac{1}{2}}$ is a Hilbert-Schmidt operator, so
\begin{equation*}
\sum_{i=1}^\infty\Vert (T_n-T)V\vert G\vert^{\frac{1}{2}} e_i\Vert=2\lambda_T\sum_{i=1}^\infty\Vert V\vert G\vert^{\frac{1}{2}}e_i\Vert<\infty.
\end{equation*}
By the dominated convergence theorem,
\begin{eqnarray*}
&&\lim_{n\rightarrow\infty}\Vert (T_n-T)V\vert G\vert^{\frac{1}{2}}\Vert_{HS}\\&=&\lim_{n\rightarrow\infty}\sum_{i=1}^\infty\Vert (T_n-T)V\vert G\vert^{\frac{1}{2}}e_i\Vert=\sum_{i=1}^\infty\lim_{n\rightarrow\infty}\Vert (T_n-T)V\vert G\vert^{\frac{1}{2}}e_i\Vert=0.
\end{eqnarray*}
Then,
$
\Vert (T_n-T)G\Vert_{1}\leqslant \Vert (T_n-T)V\vert G\vert^{\frac{1}{2}}\Vert_{HS}\Vert \vert G\vert^{\frac{1}{2}}\Vert_{HS}\rightarrow0,\quad n\rightarrow\infty.
$
\end{proof}

Associated with Corollary \ref{coro}, the following theorem guarantees the uniform convergence of the Riccati operators of approximations to the Riccati operator of the original system in nuclear norm.
\begin{theorem}\label{approximation nuclear}
For the sequence of approximations under the assumptions $(a1)-(a4)$, if  $U$ and $Y$ are finite dimensional, $\lim_{n\rightarrow \infty}\Vert B_n-\mathbf P_nB\Vert_\infty=0$, $G$ is nuclear operator and $\lim_{n\rightarrow\infty}\Vert G_n\mathbf P_n-G\Vert_1=0$, then
$$\lim_{n\rightarrow \infty}\Vert \Pi_n(t)\mathbf P_n-\Pi(t)\Vert_1=0.$$
\end{theorem}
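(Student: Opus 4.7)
The plan is to parallel the argument of Theorem \ref{ex nuclear}, but with the parameter limit $r\to r_0$ replaced by the approximation limit $n\to\infty$, and to use the lemma just proved to upgrade strong convergence of evolution operators into nuclear-norm convergence whenever a nuclear factor is present. Applying Corollary \ref{coro} to both the exact and the approximate system via the second IRE \eqref{ire2}, I would write
\[
\Pi_n(t)\mathbf P_n = T_{L_n}^*(b,t)G_n T_{L_n}(b,t)\mathbf P_n + \mathcal C_{t,n}^*\mathcal C_{t,n},\qquad \Pi(t)=T_L^*(b,t)GT_L(b,t)+\mathcal C_t^*\mathcal C_t,
\]
where $\mathcal C_{t,n}:X\to L^2(t,b;U\times Y)$ is the evident analogue of \eqref{definition C} with $B_r,L_r,T_{L,r}$ replaced by $B_n,L_n,T_{L_n}\mathbf P_n$. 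This reduces the proof to an endpoint term and a Hilbert-Schmidt-product term.

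For the endpoint term I would use the decomposition
\[
T_{L_n}^*G_nT_{L_n}\mathbf P_n - T_L^*GT_L = T_{L_n}^*(G_n\mathbf P_n-G)T_{L_n}\mathbf P_n + T_{L_n}^*G(T_{L_n}\mathbf P_n-T_L) + (T_{L_n}^*\mathbf P_n-T_L^*)GT_L.
\]
The first summand is bounded in nuclear norm by $\|T_{L_n}\|\cdot\|G_n\mathbf P_n-G\|_1\cdot\|T_{L_n}\mathbf P_n\|$, which tends to zero by hypothesis and by the uniform bound on $T_{L_n}$ given in the proof of Theorem \ref{approximation convergence}. For the remaining two summands I would apply $\|AB\|_1\le\|A\|\cdot\|B\|_1$ together with self-adjointness of $G$ and $\|A\|_1=\|A^*\|_1$ to reduce each to an expression of the form $\|(S_n-S)G\|_1$, where $S_n$ converges strongly to $S$ and is uniformly bounded; the lemma preceding this theorem then yields convergence to zero. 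The requisite strong convergences $T_{L_n}\mathbf P_n\to T_L$ and $T_{L_n}^*\mathbf P_n\to T_L^*$ follow from assumption (a1) together with Lemma \ref{per lemma}, using also $\|L_n\mathbf P_n-L\|\to 0$, which was established inside the proof of Theorem \ref{approximation convergence}.

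For the integral term I would estimate, exactly as at the end of the proof of Theorem \ref{ex nuclear},
\[
\|\mathcal C_{t,n}^*\mathcal C_{t,n}-\mathcal C_t^*\mathcal C_t\|_1 \le \|\mathcal C_{t,n}^*-\mathcal C_t^*\|_{HS}\|\mathcal C_{t,n}\|_{HS} + \|\mathcal C_t^*\|_{HS}\|\mathcal C_{t,n}-\mathcal C_t\|_{HS},
\]
so it suffices to prove $\|\mathcal C_{t,n}-\mathcal C_t\|_{HS}\to 0$. Expanding the Hilbert-Schmidt norm as a sum over the orthonormal basis of $U\times Y$, which is \emph{finite}, and integrating over $[t,b]$ as in the calculation preceding \eqref{inequality}, the integrand is dominated uniformly in $n$ by the uniform bounds on $T_{L_n}$, $B_n$, $L_n$, $\Pi_n$, $C$, $F$, and it converges pointwise to zero by the strong convergences quoted above. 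Dominated convergence then closes the estimate, and combining with the endpoint bound gives $\|\Pi_n(t)\mathbf P_n-\Pi(t)\|_1\to 0$.

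The main obstacle is that $T_{L_n}\mathbf P_n$ converges to $T_L$ only in the strong operator topology, not in norm, so in any nuclear-norm estimate a difference of evolution operators can be tolerated only when paired with a nuclear operator on one side. This forces the specific grouping $(T_{L_n}^*\mathbf P_n-T_L^*)G$ and $G(T_{L_n}\mathbf P_n-T_L)$ in the endpoint decomposition and is precisely why the preceding lemma was needed; once the decomposition is arranged this way, every surviving piece reduces to one of four ingredients already in hand, namely the hypothesis $G_n\mathbf P_n\to G$ in nuclear norm, the two strong convergences of $T_{L_n}^{(*)}\mathbf P_n$, and the Hilbert-Schmidt convergence of $\mathcal C_{t,n}$.
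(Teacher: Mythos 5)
Your proposal is correct and follows essentially the same route as the paper: the same representation $\Pi_n(t)\mathbf P_n=T_{L_n}^*(b,t)G_nT_{L_n}(b,t)\mathbf P_n+\mathcal C_{t,n}^*\mathcal C_{t,n}$ via Corollary \ref{coro}, the same borrowing of the uniform bounds and the strong convergences $T_{L_n}^{(*)}\mathbf P_n\to T_L^{(*)}$, $L_n\mathbf P_n\to L$ from Theorem \ref{approximation convergence}, the preceding lemma to turn strong convergence into nuclear-norm convergence against the nuclear factor, and dominated convergence over the finite basis of $U\times Y$ to get $\Vert\mathcal C_{t,n}-\mathcal C_t\Vert_{HS}\to 0$. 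Your grouping of the endpoint term (isolating $G_n\mathbf P_n-G$ between uniformly bounded operators and pairing the evolution-operator differences with the fixed nuclear $G$) is a harmless variant of the paper's splitting, and if anything slightly cleaner since the lemma then applies verbatim, whereas the paper pairs those differences with the varying $G_n$.
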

\begin{proof}
Defining $\mathcal C_t$ in the same way with Corollary \ref{coro} and define
$\mathcal C_{t,n}$ by substituting $n$ into $r$ in \eqref{definition C},
from Theorem \ref{nuclear}.1, $\Pi_n(t)= T_{L_n}^*(b,t)G_nT_{L_n}(b,t)+\mathcal C_{t,n}^*\mathcal C_{t,n}$ is nuclear.
The same with Theorem \ref{approximation convergence}, we also have the uniform boundedness of $\Vert T_n(\cdot,\cdot)\Vert$, $\Vert \Pi_n\Vert_\infty$ $\Vert B_n\Vert_\infty$, $\Vert L_n\Vert_\infty$ in $n$ and
$
\lim_{n\rightarrow\infty}\Vert L_n(t)\mathbf P_n-L(t)\Vert=0,
$
$
\lim_{n\rightarrow\infty}\Vert T_{L_n}(t,s)\mathbf P_nx-T_L(t,s)x\Vert=0.
$
Hence, similar to Theorem \ref{ex nuclear},
\begin{eqnarray*}
&\Vert\mathcal C_{t,n}-\mathcal C_{t}\Vert_{HS}\leqslant \sum_{j=1}^{p+q}\int_{t}^{b}\Vert (T_{L_n}^*(s, t)-T_{L}(s, t)^*)[C_n^*(s),L_n^*(s)F^{\frac{1}{2}}(s)]e_j]\Vert\\\nonumber
&+\Vert T_{L}^*(s, t)[C_n(s)-C(s),(L_n^*(s)-L^*(s))F^{\frac{1}{2}}(s)]e_j]\Vert ds\longrightarrow0, \ r\rightarrow r_0, \ (s,t)\in\Gamma_{t_0}^{b}.\nonumber
\end{eqnarray*}
Then, since $G$ is nuclear operator with $\lim_{n\rightarrow\infty}\Vert G_n\mathbf P_n-G\Vert_1=0$,
\begin{align*}
&\Vert \Pi_n(t)\mathbf P_n-\Pi(t) \Vert_1\\
&\leqslant\Vert T_{L_n}^*(b,t)\Vert\Vert G_n( T_{L_n}(b,t)\mathbf P_n-T_L(b,t))\Vert_1+\Vert (T_{L_n}^*(b,t)- T_{L}^*(b,t))G_n\Vert_1\Vert T_{L}(b,t)\Vert\\
&+\Vert T_{L}^*(b,t)\Vert\Vert G_n\mathbf P_n-G\Vert_1\Vert T_{L}(b,t)\Vert+\Vert\mathcal C_{t,n}^*-C_{t}^*\Vert_{HS}\Vert\mathcal C_{t,n}\Vert_{HS}\\
&+\Vert C_{t}^*\Vert_{HS}\Vert\mathcal C_{t,n}-C_{t}\Vert_{HS}\rightarrow 0, \ n\rightarrow \infty.
\end{align*}
\end{proof}

\begin{theorem}\label{optimal convergence2}
Under the assumptions $(a1)-(a4)$ and further assume $B_{r,n}=\mathbf P_nB_{r}$, $r\in\Omega^m$, if the input space $U$ and the output space $Y$ are finite dimensional, $\lim_{r\rightarrow r_0}\Vert B_{r}-B_{r_0}\Vert=0$, $G$ is nuclear operator and $\lim_{n\rightarrow\infty}\Vert G_nP_n-G\Vert_1=0$,
then
\begin{equation*}
\hat{\ell}_{1,n}(t)\rightarrow \hat \ell_1(t), \quad \hat{r}_n\rightarrow \hat r,\quad  n\rightarrow\infty.
\end{equation*}
\end{theorem}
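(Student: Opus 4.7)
The plan is to follow the blueprint of Theorem \ref{optimal convergence}, replacing the operator norm everywhere by the nuclear norm, and substituting Theorem \ref{approximation nuclear} for Theorem \ref{approximation convergence} and Theorem \ref{ex nuclear} for Theorem \ref{ex operator}. First I would verify the two structural prerequisites. For each fixed $r\in\Omega^m$, apply Theorem \ref{approximation nuclear} (whose hypotheses are satisfied because $B_{r,n}=\mathbf P_n B_r$ trivially gives $\Vert B_{r,n}-\mathbf P_n B_r\Vert_\infty=0$, $G$ is nuclear by assumption, and $\Vert G_n\mathbf P_n - G\Vert_1\to 0$) to obtain $\lim_{n\to\infty}\Vert\Pi_{r,n}(t)\mathbf P_n-\Pi_r(t)\Vert_1=0$. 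Second, for each fixed $n$, the family $\{B_{r,n}\}_{r\in\Omega^m}$ inherits continuity from $\{B_r\}$ via $\Vert B_{r,n}-B_{r_0,n}\Vert_\infty\le\Vert \mathbf P_n\Vert\,\Vert B_r-B_{r_0}\Vert_\infty\to 0$, so Theorem \ref{ex nuclear} provides an optimal location $\hat r_n\in\Omega^m$ with $\hat\ell_{1,n}(t)=\Vert\Pi_{\hat r_n,n}(t)\Vert_1$.

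For the upper bound I would evaluate the infimum at the limit optimizer $\hat r$:
\begin{equation*}
\hat\ell_{1,n}(t)\leqslant\Vert\Pi_{\hat r,n}(t)\Vert_1\leqslant\Vert\Pi_{\hat r,n}(t)\mathbf P_n-\Pi_{\hat r}(t)\Vert_1+\Vert\Pi_{\hat r}(t)\Vert_1\longrightarrow\hat\ell_1(t),
\end{equation*}
which yields $\limsup_{n\to\infty}\hat\ell_{1,n}(t)\leqslant\hat\ell_1(t)$. For the lower bound I would pass to a subsequence with $\hat\ell_{1,n_k}(t)\to\liminf_{n\to\infty}\hat\ell_{1,n}(t)$, use compactness of $\Omega^m$ to further extract $\hat r_{n_k}\to\bar r$, and then mirror the triangle-inequality split \eqref{sublocation continuous}: write
\begin{equation*}
\Vert\Pi_{\hat r_{n_k},n_k}(t)-\Pi_{\bar r}(t)\Vert_1\leqslant\Vert\Pi_{\hat r_{n_k},n_k}(t)\mathbf P_{n_k}-\Pi_{\hat r_{n_k}}(t)\Vert_1+\Vert\Pi_{\hat r_{n_k}}(t)-\Pi_{\bar r}(t)\Vert_1,
\end{equation*}
where the second summand vanishes by the continuity in $r$ established in Theorem \ref{ex nuclear}. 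Consequently $\liminf_{n\to\infty}\hat\ell_{1,n}(t)=\Vert\Pi_{\bar r}(t)\Vert_1\geqslant\hat\ell_1(t)$, and combining the two bounds gives $\hat\ell_{1,n}(t)\to\hat\ell_1(t)$ as well as $\Vert\Pi_{\bar r}(t)\Vert_1=\hat\ell_1(t)$, hence $\hat r_{n_k}\to\hat r$.

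The delicate point, and the place where the argument could fail, is the first term of the last display: as $k\to\infty$ both the location index $\hat r_{n_k}$ and the discretization index $n_k$ vary simultaneously, whereas Theorem \ref{approximation nuclear} only provides convergence for each fixed $r$. To make the joint convergence rigorous I would either argue that the bounds produced inside the proof of Theorem \ref{approximation nuclear} (uniform boundedness of $\Vert T_n\Vert$, $\Vert\Pi_{r,n}\Vert_\infty$, $\Vert L_{r,n}\Vert_\infty$, and the Hilbert--Schmidt norms of the operators $\mathcal C_{t,r,n}$) depend on $r$ only through $\Vert B_r\Vert_\infty$, which is uniformly bounded on the compact set $\Omega^m$, and therefore the convergence is in fact uniform in $r$ on $\Omega^m$; or, equivalently, insert an $\varepsilon$-net on $\Omega^m$ and combine the continuity in $r$ with the pointwise approximation convergence. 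Once this uniformity is in hand, all remaining steps reduce to straightforward triangle-inequality bookkeeping exactly parallel to the proof of Theorem \ref{optimal convergence}.
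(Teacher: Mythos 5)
Your proposal coincides with the paper's own proof: the paper likewise reruns the argument of Theorem \ref{optimal convergence} in the nuclear norm, using Theorem \ref{ex nuclear} for continuity in $r$ and Theorem \ref{approximation nuclear} for convergence in $n$, with the upper bound obtained by evaluating at $\hat r$ and the lower bound via a subsequence $\hat r_{n_k}\rightarrow\bar r$ extracted by compactness of $\Omega^m$. The joint-limit subtlety you flag (both $\hat r_{n_k}$ and $n_k$ varying in $\Vert\Pi_{\hat r_{n_k},n_k}(t)\mathbf P_{n_k}-\Pi_{\hat r_{n_k}}(t)\Vert_1$) is passed over silently in the paper, which simply applies the fixed-$r$ approximation result along the moving locations, so your uniformity-in-$r$ (or $\varepsilon$-net) remark supplies a justification the paper omits rather than indicating any defect in your route.
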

\begin{proof}
From Theorem \ref{ex nuclear} and Theorem \ref{approximation nuclear}, we have
$
\lim_{r\rightarrow r_0}\Vert \Pi_{r}(t)-\Pi_{r_0}(t)\Vert_1=0
$
and
$
\lim_{n\rightarrow \infty}\Vert \Pi_{r,n}(t)\mathbf P_n-\Pi_{r}(t)\Vert_1=0.
$
The same with Theorem \ref{optimal convergence}, we have
$
\hat \ell_{1,n}(t)\leqslant \Vert \Pi_{\hat r}(t)\Vert_1=\hat \ell_1(t), \quad n\rightarrow\infty.
$
Besides, there exists a subsequence $\{\hat \ell_{1,n_k}(t)\}$ such that
\begin{equation*}
\lim_{n\rightarrow \infty}\inf_{n}\hat \ell_{1,n}(t)=\lim_{k\rightarrow\infty}\hat \ell_{1,n_k}(t)=\lim_{k\rightarrow\infty}\Vert \Pi_{\hat r_{n_k},n_k}(t)\Vert_1=\Vert\Pi_{\bar r}(t)\Vert_1\geqslant\Vert\Pi_{\hat r}(t)\Vert_1=\hat \ell_1(t).
\end{equation*}

Therefore,
$
\lim_{n\rightarrow\infty}\hat{\ell}_{1,n}(t)=\hat \ell_1(t)
$
and
$
\lim_{k\rightarrow\infty}\Vert \Pi_{\hat r_{n_k},n_k}(t)\Vert_1=\Vert\Pi_{\hat r}(t)\Vert_1.
$
By the continuity in Theorem \ref{ex nuclear},
$
\lim_{k\rightarrow\infty}\hat r_{n_k}=\hat r.
$
\end{proof}

\section{Kalman filter in Hilbert spaces and the duality of LQ optimal control problem}\label{kalman filter}
There are several literatures \cite{Catlin09}, \cite{Curtain78}, \cite{Falb67}, \cite{Kalman60} discussing the Kalman filter in different approaches and the duality to the linear-quadratic optimal control. 
However, to author's knowledge, these derivations involve the generator of semigroups or evolution operators.
In this section, without the differentiability of evolution operators, we first derive the Kalman filter in real separable Hilbert spaces. 
Further, we will discuss the duality between Kalman filter and linear-quadratic optimal control.

Let $(\Omega, \mathcal B, \mu)$ be a complete probability space and $\mathcal X$, $\mathcal E$, $\mathcal Y$ be a real separable Hilbert spaces. First, we define some basic concepts of probability theory in Hilbert spaces \cite{Curtain78}, \cite{Parthasarathy67}.

\begin{definition}
The map $x: \Omega\rightarrow \mathcal X$ is a \emph{$\mathcal X-$valued random variable} if it is strong measurable with respect to a measure $\mu$.
\end{definition}

\begin{definition}
$\mu$ is a \emph{totally finite measure} on $\mathcal X$ if for any $\mathcal X-$valued random variable $x$,
$\int_{\Omega}\Vert x\Vert d\mu<\infty$.
Further, if there exists $ \bar x \in \mathcal X$ such that
\begin{equation*}
\langle \bar x,h\rangle =\mathbf E\langle \bar x, h\rangle=\int_{\Omega} \langle \bar x, h\rangle \mu(dx), \quad \forall h\in \mathcal X,
\end{equation*}
$\bar x$ is called the \emph{mean} or \emph{expectation} of $x$ and denoted by $\mathbf Ex$.
\end{definition}

\begin{definition}
For any $\mathcal X-$valued random variable $x$ with mean $\mathbf Ex$, the \emph{covariance operator} $P$ of $x$, also denoted by $Cov(x)$, if it exists, is given by
\begin{equation*}
\langle Ph_1,h_2\rangle=\langle h_1,Ph_2\rangle=\int_{\Omega} \langle x-\mathbf Ex, h_1\rangle \langle x-\mathbf Ex, h_2\rangle \mu(dx), \quad\forall h_1, h_2\in \mathcal X.
\end{equation*}
\end{definition}

\begin{definition}
The random variables $x, y$ whose expectations exist are \emph{independent} if
$\mathbf E(\langle x,y\rangle)=\langle \mathbf E(x), \mathbf E(y)\rangle$.
\end{definition}

\begin{definition}
 Let $\mu$ be a probability measure on $\mathcal X$. If for any $x\in \mathcal X$, the random variable $\langle x, \cdot\rangle$ has a Gaussian distribution, then $\mu$ is called a
\emph{Gaussian measure}. Further, we denote $x$ of the Gaussian measure with mean $\bar x$ and covariance $P$ by $x\sim N(\bar x, P)$.
\end{definition}

\begin{definition}
$\{\omega(t), t\in \mathds R\}$ is a set of \emph{white noises} if for any $t\in[0,+\infty]$, there exists a covariance operator $W(t)$ such that $\omega(t)\sim N(0, W(t))$ and for any $t\neq s$,
$\omega(t)$ and $\omega(s)$ are independent.
\end{definition}

We consider time-varying systems on Hilbert spaces given by
\begin{equation}\label{state}
x(t)=M(t,t_0)x(t_0)+\int_{t_0}^{t}M(t,s)[B(s)u(s)+D(s)\omega(s)]ds, \quad (t,t_0)\in\Gamma_{t_0}^{b},
\end{equation}
where $M(\cdot, \cdot)$ is a mild evolution operator on $\mathcal X$. $x(t)$ and $\omega(t)$ are random variables with values in $\mathcal X$ and $\mathcal E$, respectively and $\omega(t)\sim N(0,W(t))$ is the white noise. Further, we assume $u\in L^2(t_0,b;U)$, $B\in L^\infty_s(t_0, b;U, \mathcal X)$, $B^*\in L^\infty_s(t_0, b; \mathcal X, U )$, $D\in L^\infty_s(t_0, b; \mathcal E, \mathcal X )$.

We consider the following observation system
\begin{equation}\label{observation}
y(t)=H(t)x(t)+E(t)\nu(t), \quad t\in [t_0,b],
\end{equation}
where $H\in L^\infty_s(t_0, b; \mathcal X, \mathcal Y )$, $E\in L^\infty_s(t_0, b; \mathcal E, \mathcal Y )$, $y(t)$ and $\nu(t)$ are random variables with values in $\mathcal Y$ and $\mathcal E$, respectively and $\nu(t)\sim N(0,V(t))$ is the white noise and $V(t)$ is a coercive operator..

In our paper, we only consider the integral form of time-varying systems.
Let $Y_t=\lbrace y(s), t_0 \leqslant s\leqslant t\rbrace$,  the linear unbiased estimation of the filter problem $\hat x(t\vert t)$ of $x(t)$ \cite{Kalman61} has the form
\begin{eqnarray}\label{estimation}
\hat{x}(t\vert t)&=&M(t,t_0)\hat x(t_0\vert t_{-1})\\
&&+\int_{t_0}^{t}M(t,s)B(s)u(s)ds+\int_{t_0}^{t}K_f(t,s)[y(s)-H(s)\hat{x}(s\vert s)]ds, \nonumber
\end{eqnarray}
where $\hat x(t_0\vert t_{-1})=\mathbf E(x(t_0))$, $Cov(x(t_0))=P(t_0\vert t_{-1})$ and $K_f(\cdot,\cdot)$ is an unknown linear gain operator.

Denoting $\tilde x(t\vert t):=x(t)-\hat x(t\vert t)$,  $P(t\vert t):=Cov(\tilde x(t))$ and $R(t):=E(t)V(t)E^*(t)$, we obtain the following theorem
\begin{theorem}\label{kalman gain}
For the time-varying system \eqref{state} with the observation system \eqref{observation}, the linear unbiased estimation of the filter problem $\hat x(t\vert t)$ of $x(t)$ is optimal if the linear gain operator in \eqref{estimation} is given by
$K_f(t,\tau)=M(t,\tau) P(\tau \vert \tau)H^*(\tau)R^{-1}(\tau)$, $\tau\leqslant t$.
\end{theorem}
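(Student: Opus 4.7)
The plan is to use the orthogonality principle for linear minimum-variance estimators in a Hilbert-space setting: a linear unbiased estimator $\hat{x}(t\vert t)$ minimizes $\mathrm{Tr}\,P(t\vert t)$ if and only if the error $\tilde{x}(t\vert t)$ is uncorrelated with every observation $y(\tau)$, $\tau\in[t_0,t]$. It therefore suffices to exhibit a $K_f$ that produces an error satisfying these orthogonality conditions, and the claimed formula is the candidate to verify.

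First I would subtract \eqref{estimation} from \eqref{state} and use \eqref{observation} in the form $y(s)-H(s)\hat{x}(s\vert s)=H(s)\tilde{x}(s\vert s)+E(s)\nu(s)$ to obtain the Volterra integral equation
\[
\tilde{x}(t\vert t) = M(t,t_0)\tilde{x}(t_0\vert t_{-1}) + \int_{t_0}^{t} M(t,s)D(s)\omega(s)\,ds - \int_{t_0}^{t} K_f(t,s)\bigl[H(s)\tilde{x}(s\vert s)+E(s)\nu(s)\bigr]\,ds.
\]
Under the factored ansatz $K_f(t,s)=M(t,s)K(s)$, suggested by the claimed formula and by the semigroup-like propagation structure of the problem, the evolution property of $M$ collapses the first and last integrals on $[t_0,\tau]$ into $M(t,\tau)\tilde{x}(\tau\vert\tau)$, yielding the telescoping form $\tilde{x}(t\vert t)=M(t,\tau)\tilde{x}(\tau\vert\tau)+\xi(\tau,t)$ in which $\xi(\tau,t)$ involves only $\omega(s)$, $\nu(s)$ and $\tilde{x}(s\vert s)$ for $s\in[\tau,t]$. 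Since $\omega(s),\nu(s)$ for $s>\tau$ are independent of $y(\tau)$, and since, inductively in time, $\tilde{x}(s\vert s)$ for $s\ge \tau$ is already orthogonal to $y(\tau)$, the cross-covariance $\mathbf{E}[\xi(\tau,t)\otimes y(\tau)]$ vanishes, and the target orthogonality $\mathbf{E}[\tilde{x}(t\vert t)\otimes y(\tau)]=0$ reduces to $M(t,\tau)\mathbf{E}[\tilde{x}(\tau\vert\tau)\otimes y(\tau)]=0$, an algebraic condition on the instantaneous gain $K(\tau)$. Expanding $y(\tau)=H(\tau)x(\tau)+E(\tau)\nu(\tau)$ and exploiting the delta-correlated structure of $\nu$ to localize $\int_{t_0}^{\tau}K(s)E(s)\nu(s)\,ds$ at $s=\tau$, this condition becomes $P(\tau\vert\tau)H^{*}(\tau)=K(\tau)R(\tau)$; inverting $R(\tau)=E(\tau)V(\tau)E^{*}(\tau)$, which is permissible because $V$ is coercive, gives $K(\tau)=P(\tau\vert\tau)H^{*}(\tau)R^{-1}(\tau)$ and hence the claimed expression for $K_f$.

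The main obstacle will be giving rigorous meaning to the white-noise integrals $\int M(t,s)D(s)\omega(s)\,ds$ on the Hilbert space $\mathcal{X}$ without relying on generators or differentiability of $M(\cdot,\cdot)$. The natural framework is that of $\mathcal{X}$-valued Wiener integrals, for which one has an isometry of the form $\mathrm{Cov}\bigl(\int_{t_0}^{t} F(s)\,dW(s),\int_{t_0}^{t} G(s)\,dW(s)\bigr)=\int_{t_0}^{t} F(s)W(s)G^{*}(s)\,ds$, and this isometry is precisely what justifies the localization of noise cross-covariances at coincident times and endows $P(t\vert t)$ with a bona fide operator meaning. A secondary subtlety is the implicit induction in time for the orthogonality $\tilde{x}(s\vert s)\perp y(\tau)$ with $s\ge\tau$, which I would handle by organizing the argument as a guess-and-verify on the factored ansatz: assume the form, derive the Riccati-type integral equation for $P(\tau\vert\tau)$ directly from the error dynamics, and then confirm the orthogonality conditions at every time by closing the loop.
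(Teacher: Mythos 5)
Your overall strategy --- orthogonality principle, error dynamics, coercivity of $R$ to invert --- is the same principle the paper uses, but as written your verification is circular at its key step. To conclude $\mathbf E[\tilde x(t\vert t)\otimes y(\tau)]=0$ you discard the term $\int_\tau^t M(t,s)K(s)H(s)\tilde x(s\vert s)\,ds$ on the grounds that ``inductively in time, $\tilde x(s\vert s)$ for $s\geqslant\tau$ is already orthogonal to $y(\tau)$''; but that is exactly the statement being proved, applied at the intermediate times $s\in(\tau,t)$, and there is no induction over a continuum of times --- one needs an actual mechanism, e.g.\ a Volterra integral equation for $s\mapsto\mathbf E[\tilde x(s\vert s)\otimes y(\tau)]$ closed by a uniqueness/Gronwall step, or the innovations machinery. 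The same issue enters, silently, in your coincident-time computation: to turn $\mathbf E[\tilde x(\tau\vert\tau)\otimes H(\tau)x(\tau)]$ into $P(\tau\vert\tau)H^*(\tau)$ you need $\tilde x(\tau\vert\tau)\perp\hat x(\tau\vert\tau)$, which is again an orthogonality-to-the-data fact that your guess-and-verify scheme has not yet established (the paper quotes it from \cite{Curtain78}). Saying you will ``close the loop'' names the problem; it does not supply the missing argument.

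The paper avoids the circularity by arguing in the opposite direction: it takes the Wiener--Hopf equation --- orthogonality of the optimal error to the innovations $y(\tau)-H(\tau)\hat x(\tau\vert\tau)$ for all $\tau<t$, cited from \cite{Falb67}, \cite{Kalman61} as a necessary and sufficient optimality criterion --- as the hypothesis, so all intermediate orthogonality relations are available by assumption; it then computes $\mathbf E\langle x(t),h_1\rangle\langle y(\tau)-H(\tau)\hat x(\tau\vert\tau),h_2\rangle=\langle h_1,M(t,\tau)P(\tau\vert\tau)H^*(\tau)h_2\rangle$ and $\mathbf E\langle\hat x(t\vert t),h_1\rangle\langle y(\tau)-H(\tau)\hat x(\tau\vert\tau),h_2\rangle=\langle h_1,K_f(t,\tau)R(\tau)h_2\rangle$, equates them and inverts the coercive $R$. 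In particular the factorization $K_f(t,\tau)=M(t,\tau)K(\tau)$ comes out as a conclusion rather than being imposed as an ansatz, and the value at $\tau=t$ is obtained by strong continuity of $K_f(t,\cdot)$ rather than by asserting orthogonality to $y(t)$ itself. To repair your sufficiency route you would either invoke that same Wiener--Hopf characterization (at which point your proof collapses into the paper's), or genuinely verify the orthogonality conditions for the candidate gain from the explicit representation $\tilde x(t\vert t)=M_K(t,t_0)\tilde x(t_0\vert t_{-1})+\int_{t_0}^tM_K(t,s)[D(s)\omega(s)-K(s)E(s)\nu(s)]ds$ by a closed computation of the relevant cross-covariances --- a substantive step absent from your outline. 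Your concern about rigorous white-noise integrals is legitimate but secondary: the paper works at the same formal level, localizing the $\nu$-correlation at the endpoint exactly as you propose.
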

\begin{proof}
By Wiener-Hopf's equation \cite{Falb67}, \cite{Kalman61},  $\hat x(t\vert t)$ minimizes the minimal covariance if and only if
$\mathbf E\langle \tilde x(t), h_1\rangle \langle y(\tau)-H(\tau)\hat x(\tau\vert \tau), h_2\rangle=0,\quad \tau< t$, $h_1$, $h_2\in \mathcal X$. Further, according to \cite[Corollary 6.3]{Curtain78},
$\mathbf E\langle\hat x(t\vert t),h_1\rangle\langle \tilde x(t\vert t), h_2\rangle=0$.
Hence, on one hand,
\begin{eqnarray*}
&&\mathbf E\langle x(t),h_1\rangle\langle y(\tau)-H(\tau)\hat x(\tau\vert \tau), h_2\rangle\\
&=&\mathbf E\langle x(t),h_1\rangle\langle H(\tau)\tilde x(\tau\vert \tau),h_2\rangle+\mathbf E\langle x(t),h_1\rangle\langle E(\tau)\nu(\tau),h_2\rangle\\
&=&\mathbf E\langle M(t,\tau)x(\tau),h_1\rangle\langle H(\tau)\tilde x(\tau\vert \tau),h_2\rangle-\mathbf E\langle M(t,\tau)\hat x(\tau\vert \tau),h_1\rangle\langle H(\tau)\tilde x(\tau\vert\tau),h_2\rangle\\
&=& \mathbf E\langle M(t,\tau) \tilde x(t\vert t),h_1\rangle\langle H(\tau)\tilde x(\tau\vert \tau),h_2\rangle=\langle h_1, M(t,\tau)P(\tau \vert \tau)H^*(\tau)h_2\rangle.
\end{eqnarray*}
On the other hand,
\begin{eqnarray*}
&&\mathbf E\langle \hat x(t\vert t),h_1\rangle\langle y(\tau)-H(\tau)\hat x(\tau\vert \tau),h_2\rangle\\
&=&\mathbf E\langle \int_{\tau}^{t}K_f(t,s)[H(s)\tilde{x}(s\vert s)+E(s)\nu(s)]ds,h_1\rangle\langle y(\tau)-H(\tau)\hat x(\tau\vert \tau),h_2\rangle\\
&=&\mathbf E\langle \int_{\tau}^{t}K_f(t,s)E(s)\nu(s)ds,h_1\rangle\langle H(\tau)\tilde x(\tau\vert \tau)+ E(\tau)\nu(\tau),h_2\rangle\\
&=& \mathbf E\langle \int_{\tau}^{t}K_f(t,s)E(s)\nu(s)ds,h_1\rangle\langle E(\tau)\nu(\tau),h_2\rangle=\langle h_1,K_f(t,\tau)R(\tau)h_2\rangle
\end{eqnarray*}

Therefore,
$
K_f(t,\tau)R(\tau)= M(t,\tau)P(\tau \vert \tau)H^*(\tau).
$
Since $R(t)$ is coercive, we obtain
$$
K_f(t,\tau)= M(t,\tau)P(\tau \vert \tau)H^*(\tau)R^{-1}(\tau),\quad \tau<t.
$$
If $t=\tau$, by the strong continuity of $K_f(t,\cdot)$, $K_f(t,t)= P(t \vert t)H^*(t)R^{-1}(t)$.
\end{proof}

Defining $K(t):= K_f(t,t)=P(t \vert t)H^*(t)R^{-1}(t)$, Theorem \ref{kalman gain} implies that
\begin{eqnarray}\label{diff estimation}
\tilde{x}(t\vert t)&=&M(t,t_0)\tilde x(t_0\vert t_{-1})-\int_{t_0}^tM(t,s)K(s)H(s)\tilde x(s\vert s)ds\nonumber\\
&&+\int_{t_0}^tM(t,s)[D(s)\omega(s)-K(s)E(s)\nu(s)]ds.
\end{eqnarray}

\begin{theorem}
Equation \eqref{diff estimation} is equivalent to
\begin{equation}\label{diff x(t)}
\tilde{x}(t\vert t)=M_K(t,t_0)\tilde x(t_0\vert t_{-1})+\int_{t_0}^{t}M_K(t,s)\left(D(s)\omega(s)-K(s)E(s)\nu(s)\right)ds,
\end{equation}
where $
M_K(t,\tau)x=M(t,\tau)x-\int_\tau^{t}M_K(t,s)K(s)H(s)M(s,\tau)xds$, $(t,\tau)\in\Gamma_{t_0}^b$
\end{theorem}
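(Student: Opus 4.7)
The plan is to view \eqref{diff estimation} as a Volterra integral equation of the second kind for $\tilde x(\cdot\vert\cdot)$ with bounded kernel $M(t,s)K(s)H(s)$, and to verify that the explicit formula on the right-hand side of \eqref{diff x(t)} satisfies the \emph{same} Volterra equation. Uniqueness, which follows from Picard iteration together with a Gronwall estimate based on $\|M(t,s)\|\le\lambda$ and $K,H\in L^\infty_s$, will then force the two expressions to coincide. All integrals involving $\omega$ and $\nu$ are interpreted pathwise (equivalently, in the weak stochastic sense), and the algebraic manipulations are insensitive to this choice.

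The first step is to obtain a \emph{dual} form of the defining equation of $M_K$. Constructing $M_K=\sum_{n=0}^\infty M_{K,n}$ by the Volterra series used in Lemma \ref{per lemma} (with perturbation $-KH$), I would verify that the left iteration $M_{K,n+1}(t,\tau)x=-\int_\tau^t M_{K,n}(t,s)K(s)H(s)M(s,\tau)x\,ds$ and the right iteration $M_{K,n+1}(t,\tau)x=-\int_\tau^t M(t,s)K(s)H(s)M_{K,n}(s,\tau)x\,ds$ produce the same terms, by swapping the nested time integrals via Fubini; the uniform bound $\|M_{K,n}(t,\tau)\|\le \lambda^{n+1}\|KH\|_\infty^{n}(t-\tau)^{n}/n!$ legitimises termwise summation. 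Consequently $M_K$ also satisfies
\begin{equation*}
M_K(t,\tau)x=M(t,\tau)x-\int_\tau^{t}M(t,s)K(s)H(s)M_K(s,\tau)x\,ds.
\end{equation*}

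The second step is the substitution itself. Set $g(s):=D(s)\omega(s)-K(s)E(s)\nu(s)$ and let $z(t):=M_K(t,t_0)\tilde x(t_0\vert t_{-1})+\int_{t_0}^{t}M_K(t,s)g(s)\,ds$. Plugging the dual identity from Step one into both occurrences of $M_K$ in $z(t)$ and swapping the order of integration in the double integral $\int_{t_0}^{t}\!\int_{s}^{t}M(t,r)K(r)H(r)M_K(r,s)g(s)\,dr\,ds=\int_{t_0}^{t}M(t,r)K(r)H(r)\bigl[\int_{t_0}^{r}M_K(r,s)g(s)\,ds\bigr]dr$, the bracketed term combines with $-\int_{t_0}^{t}M(t,s)K(s)H(s)M_K(s,t_0)\tilde x(t_0\vert t_{-1})\,ds$ into $-\int_{t_0}^{t}M(t,s)K(s)H(s)z(s)\,ds$. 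This yields exactly \eqref{diff estimation} with $z$ in place of $\tilde x(\cdot\vert\cdot)$, so by uniqueness of the Volterra equation $\tilde x(t\vert t)=z(t)$, which is \eqref{diff x(t)}.

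The main obstacle is the first step: although it parallels Lemma \ref{per lemma}, the theorem's statement places the perturbation on the left of the kernel whereas the substitution argument requires it on the right, so the equivalence of the two Volterra formulations must be justified carefully via the series representation and Fubini. Once this dual identity is available, the Fubini exchange in Step two and the Volterra uniqueness in the conclusion are then routine.
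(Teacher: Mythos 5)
Your proposal is correct, but it follows a different route than the paper. The paper works directly from \eqref{diff estimation}: it replaces each occurrence of $M(t,\cdot)$ there by $M_K(t,\cdot)x+\int M_K(t,s)K(s)H(s)M(s,\cdot)x\,ds$ (the defining relation of $M_K$ read backwards), swaps the order of the double integrals, and observes that the resulting bracket $M(s,t_0)\tilde x(t_0\vert t_{-1})-\int_{t_0}^{s}M(s,\eta)K(\eta)H(\eta)\tilde x(\eta\vert\eta)\,d\eta+\int_{t_0}^{s}M(s,\eta)\bigl(D(\eta)\omega(\eta)-K(\eta)E(\eta)\nu(\eta)\bigr)d\eta-\tilde x(s\vert s)$ vanishes because \eqref{diff estimation} holds at the interior time $s$; so it needs neither a uniqueness theorem nor the dual form of the perturbation identity. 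You instead verify that the closed-form candidate $z$ from \eqref{diff x(t)} satisfies the Volterra equation \eqref{diff estimation} and then invoke uniqueness (Picard/Gronwall with the bound $\lambda\Vert KH\Vert_\infty$), which forces you to first prove the dual resolvent identity $M_K(t,\tau)x=M(t,\tau)x-\int_\tau^{t}M(t,s)K(s)H(s)M_K(s,\tau)x\,ds$ via the Neumann series of Lemma \ref{per lemma} and a Fubini interchange on the ordered simplices — a correct and standard step, but one the paper never needs. Your version costs these two extra (routine) ingredients, while buying a cleaner two-way statement of the equivalence — existence of the solution of \eqref{diff estimation} as the explicit formula plus uniqueness — whereas the paper's substitution is shorter and self-contained but reads as a one-directional identity check that quietly reuses \eqref{diff estimation} at interior times. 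Both arguments share the same formal treatment of the white-noise integrals, which you at least flag explicitly.
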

\begin{proof}
From \eqref{diff estimation},
\allowdisplaybreaks
\begin{eqnarray*}
&&\tilde{x}(t\vert t)\\
&=&M_K(t,t_0)\tilde x(t_0\vert t_{-1})+\int_{t_0}^tM_K(t,s)K(s)H(s)M(s,t_0)\tilde x(t_0\vert t_{-1})ds\\
&&-\int_{t_0}^tM_K(t,s)K(s)H(s)\tilde x(s\vert s)ds\\
&&-\int_{t_0}^t\int_s^tM_K(t,\eta)K(\eta)H(\eta)M(\eta,s)K(s)H(s)\tilde x(s\vert s)d\eta ds\\
&&+\int_{t_0}^tM_K(t,s)[D(s)\omega(s)-K(s)E(s)\nu(s)]ds\\
&&+\int_{t_0}^t\int_s^tM_K(t,\eta)K(\eta)H(\eta)M(\eta,s)\left(D(s)\omega(s)-K(s)E(s)\nu(s)\right)d\eta ds\\
&=&M_K(t,t_0)\tilde x(t_0\vert t_{-1})+\int_{t_0}^tM_K(t,s)\left(D(s)\omega(s)-K(s)E(s)\nu(s)\right)ds\\
&&-\int_{t_0}^tM_K(t,s)K(s)H(s)\tilde x(s\vert s)ds+\int_{t_0}^tM_K(t,s)K(s)H(s)M(s,t_0)\tilde x(t_0\vert t_{-1})ds\\
&&-\int_{t_0}^tM_K(t,s)K(s)H(s)\int_{t_0}^sM(s,\eta)K(\eta)H(\eta)\tilde x(\eta\vert\eta)d\eta ds\\
&&+\int_{t_0}^tM_K(t,s)K(s)H(s)\int_{t_0}^sM(s,\eta)\left(D(\eta)\omega(\eta)-K(\eta)E(\eta)\nu(\eta)\right)d\eta ds\\
&=&M_K(t,t_0)\tilde x(t_0\vert t_{-1})+\int_{t_0}^tM_K(t,s)\left(D(s)\omega(s)-K(s)E(s)\nu(s)\right)ds.
\end{eqnarray*}
\end{proof}

For finite-dimensional systems, the trace of the covariance of $\tilde x(t\vert t)$ is considered as an evaluation of the estimation errors. For systems on Hilbert spaces, similarly we consider the nuclear norm of the covariance of $\tilde x(t\vert t)$. Defining $Q(t):=D(t)W(t)D^*(t)$, we obtain the following theorem.
\begin{theorem}
The covariance (if exists) of $\tilde x(t\vert t)$ satisfies the IRE
\begin{eqnarray}\label{cov kf}
P(t \vert t)&=&M_K(t,t_0)P(t_0\vert t_{-1})M_K^*(t,t_0)\\
&+&\int_{t_0}^tM_K(t,s)\left[Q(s)+P(s\vert s)H^*(s)R^{-1}(s)H(s)P(s\vert s)\right]M_K^*(t,s)ds.\nonumber
\end{eqnarray}
\end{theorem}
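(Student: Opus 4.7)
The plan is to start from the closed-form expression \eqref{diff x(t)} for $\tilde x(t\vert t)$, which already represents the error as the action of the perturbed evolution operator $M_K$ on the initial error plus an additive ``driving-noise'' term. Since $P(t\vert t)$ is defined through the bilinear form $\langle P(t\vert t)h_1,h_2\rangle=\mathbf E\langle\tilde x(t\vert t),h_1\rangle\langle\tilde x(t\vert t),h_2\rangle$, I will compute this quadratic form directly and identify the operator that realises it. The three constituents of $\tilde x(t\vert t)$ are (i) $M_K(t,t_0)\tilde x(t_0\vert t_{-1})$, (ii) $\int_{t_0}^t M_K(t,s)D(s)\omega(s)\,ds$, and (iii) $-\int_{t_0}^t M_K(t,s)K(s)E(s)\nu(s)\,ds$. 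The initial error, the process white noise $\omega$ and the measurement white noise $\nu$ are mutually independent and zero-mean, so all six cross terms in the expansion of $\langle\tilde x(t\vert t),h_1\rangle\langle\tilde x(t\vert t),h_2\rangle$ vanish in expectation, leaving precisely three diagonal contributions.

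The first diagonal term is immediate: by definition of the adjoint and of the covariance $P(t_0\vert t_{-1})$,
\begin{equation*}
\mathbf E\langle M_K(t,t_0)\tilde x(t_0\vert t_{-1}),h_1\rangle\langle M_K(t,t_0)\tilde x(t_0\vert t_{-1}),h_2\rangle
=\langle M_K(t,t_0)P(t_0\vert t_{-1})M_K^*(t,t_0)h_1,h_2\rangle.
\end{equation*}
For the two noise integrals, I will use the whiteness property, which says $\mathbf E\langle\omega(s),a\rangle\langle\omega(r),b\rangle=\delta(s-r)\langle W(s)a,b\rangle$ (and analogously for $\nu$ with $V$), together with a Fubini-type exchange of expectation and the double time integral justified by $D,E,K,H,M_K\in L^\infty_s$. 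This collapses the double integrals to single ones and yields
\begin{equation*}
\int_{t_0}^t\langle M_K(t,s)D(s)W(s)D^*(s)M_K^*(t,s)h_1,h_2\rangle\,ds
=\int_{t_0}^t\langle M_K(t,s)Q(s)M_K^*(t,s)h_1,h_2\rangle\,ds
\end{equation*}
for the process-noise term, and
\begin{equation*}
\int_{t_0}^t\langle M_K(t,s)K(s)E(s)V(s)E^*(s)K^*(s)M_K^*(t,s)h_1,h_2\rangle\,ds
=\int_{t_0}^t\langle M_K(t,s)K(s)R(s)K^*(s)M_K^*(t,s)h_1,h_2\rangle\,ds
\end{equation*}
for the measurement-noise term.

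Finally I substitute the gain $K(s)=P(s\vert s)H^*(s)R^{-1}(s)$ from Theorem \ref{kalman gain}. Since $R(s)$ is self-adjoint, $K(s)R(s)K^*(s)=P(s\vert s)H^*(s)R^{-1}(s)H(s)P(s\vert s)$, and adding the two noise terms produces the bracketed integrand $Q(s)+P(s\vert s)H^*(s)R^{-1}(s)H(s)P(s\vert s)$ that appears in \eqref{cov kf}. Combining the three contributions and reading off the operator behind the bilinear form (using that $h_1,h_2\in\mathcal X$ were arbitrary) yields the integral Riccati identity.

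The main obstacle I anticipate is the rigorous justification of the white-noise calculations, namely the use of the formal identity $\mathbf E\langle\omega(s),a\rangle\langle\omega(r),b\rangle=\delta(s-r)\langle W(s)a,b\rangle$ inside a double time integral. The paper already treats all noise terms in integral form, so the cleanest route is to rephrase each noise integral as an $\mathcal X$-valued Gaussian random vector whose covariance operator is read off directly from the integrand through the It\^o isometry; the uniform essential-boundedness of $D$, $E$, $K$, $H$ and of $M_K$ on the compact interval $[t_0,b]$ then guarantees that the resulting covariance operators are well defined and that Fubini applies, so the cross-term vanishing and the diagonal identifications above are legitimate.
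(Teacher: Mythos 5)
Your proposal is correct and follows essentially the same route as the paper: starting from \eqref{diff x(t)}, computing the bilinear form $\mathbf E\langle\tilde x(t\vert t),h_1\rangle\langle\tilde x(t\vert t),h_2\rangle$, dropping cross terms by independence, identifying the noise contribution as $\int_{t_0}^t M_K(t,s)[Q(s)+K(s)R(s)K^*(s)]M_K^*(t,s)\,ds$, and substituting $K(s)=P(s\vert s)H^*(s)R^{-1}(s)$. The only difference is organizational — the paper packages the two noise integrals into a single operator $\mathcal Q_t$ on $L^2(t_0,t;\mathcal E\times\mathcal E)$ and reads off $\mathcal Q_t\,Cov(\omega,\nu)\,\mathcal Q_t^*$, whereas you treat them separately via the delta-correlation/It\^o-isometry argument, which is the same computation in different notation.
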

\begin{proof}
For $\tilde x(t\vert t)$ in \eqref{diff x(t)}, assume its covariance $P(t \vert t)$ exists and define $\mathcal Q_t$: $L^2(t_0,t;\mathcal E\times \mathcal E)\rightarrow \mathcal X$ by
\begin{equation*}
 \mathcal Q_t\left(\begin{array}{cc}
              \omega\\
              \nu
             \end{array}\right)
=\int_{t_0}^t\left[M_K(t,s)D(s), -M_K(t,s)K(s)E(s)\right]\left(\begin{array}{cc}
              \omega(s)\\
              \nu(s)
             \end{array}\right)ds.
\end{equation*}
Its adjoint operator $\mathcal Q_t^*$: $\mathcal X\rightarrow L^2(t_0,t;\mathcal E\times\mathcal E)$ is given by
\begin{equation*}
 \mathcal Q_t^*x=\left(\begin{array}{cc}
              D^{*}(\cdot)M_{K}^*(t,\cdot)\\
              -E^*(\cdot)K^*(\cdot)M_K^*(t,\cdot)
             \end{array}\right)x, \quad x\in \mathcal X.
\end{equation*}

Then, we obtain
\allowdisplaybreaks
\begin{eqnarray*}
&& \mathbf E\langle \tilde x(t\vert t),h_1\rangle\langle\tilde x(t\vert t),h_2\rangle\\
&=&\mathbf E\langle M_K(t,t_0)\tilde x(t_0\vert t_{-1}),h_1\rangle\langle M_K(t,t_0)\tilde x(t_0\vert t_{-1}),h_2\rangle\\
&&+\mathbf E\langle  \mathcal Q_t\left(\begin{array}{cc}
              \omega\\
              \nu
             \end{array}\right), h_1\rangle\langle\mathcal Q_t\left(\begin{array}{cc}
              \omega\\
              \nu
             \end{array}\right),h_2\rangle\\
&=&\langle M_K(t,t_0)P(t_0\vert t_{-1})M_K^*(t,t_0)h_1,h_2\rangle+\langle \mathcal Q_tCov\left(\left(\begin{array}{cc}
              \omega\\
              \nu
             \end{array}\right)\right) \mathcal Q_t^*h_1, h_2\rangle\\
&=&\langle M_K(t,t_0)P(t_0\vert t_{-1})M_K^*(t,t_0)h_1,h_2\rangle\\
&&+\langle\int_{t_0}^tM_K(t,s)\left[Q(s)+K(s)R(s)K^*(s)\right]M_K^*(t,s)h_1ds,h_2\rangle,
\end{eqnarray*}
Hence, for any $x\in \mathcal X$
\begin{eqnarray*}
P(t \vert t)x&=&M_K(t,t_0)P(t_0\vert t_{-1})M_K^*(t,t_0)x\\
&&+\int_{t_0}^tM_K(t,s)\left[Q(s)+K(s)R(s)K^*(s)\right]M_K^*(t,s)xds\\
&=&M_K(t,t_0)P(t_0\vert t_{-1})M_K^*(t,t_0)x\\
&&+\int_{t_0}^tM_K(t,s)\left[Q(s)+P(s\vert s)H^*(s)R^{-1}(s)H(s)P(s\vert s)\right]M_K^*(t,s)xds.
\end{eqnarray*}
\end{proof}

A comparison with the main results of the linear-quadratic optimal control problem in Section \ref{main results of lq} yields: By observing the similarity between \eqref{cov kf} and the second integral Riccati equation related to the linear quadratic
optimal control problem, it is clear that to consider the covariance of $\tilde x(t\vert t)$ of the time-varying system \eqref{state}
with the observations \eqref{observation}
is equivalent to consider the Riccati operator $\Pi(b-t)$ in \eqref{ire2} corresponding to the time-varying system
\begin{equation*}
x(t)=T(t,t_0)x(t_0)+\int_{t_0}^t T(t,s)B(s)u(s)ds.
\end{equation*}
with the cost functional
\begin{equation*}
J(t,x, u)=\langle x(b), Gx(b)\rangle+\int_{t}^{b}\langle C(s)x(s),C(s)x(s)\rangle+\langle u(s), F(s)u(s)\rangle ds,
\end{equation*}
where $T(t,s)=M^*(b-s,b-t)$, $B(s)=H^*(b-s)$, $G=P(t_0\vert t_{-1})$, $C(s)= Q^{\frac{1}{2}}(b-s)$, $F(s)=R(b-s)$, $(t,s)\in \Gamma_{t_0}^b$.

Then, by the duality between the linear quadratic control problem and Kalman filters, Corollary \ref{coro} implies the following condition to guarantee the existence and nuclearity of $P(t\vert t)$.
\begin{theorem}\label{nuclear kf}
For the time-varying system \eqref{state} with the observation system \eqref{observation}, if $\mathcal E$ and  $\mathcal Y$ are finite dimensional and $P(t_0\vert t_{-1})$ is a nuclear operator, then the covariance of $\tilde x(t\vert t)$ based on $Y_t$ satisfying \eqref{cov kf} exists and is a nuclear operator.
\end{theorem}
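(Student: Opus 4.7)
The plan is to invoke the duality observation stated immediately before the theorem, together with Corollary \ref{coro}, to transfer nuclearity from the LQ Riccati operator to the Kalman covariance. Under the correspondence $T(t,s)=M^*(b-s,b-t)$, $B(s)=H^*(b-s)$, $G=P(t_0\vert t_{-1})$, $F(s)=R(b-s)$, and an appropriate factorization of $Q$ as $C^*C$, equation \eqref{cov kf} for the covariance is exactly the second integral Riccati equation \eqref{ire2} of the dual LQ control problem. Hence $P(t\vert t)$ coincides with $\Pi(b-t)$ of the dual problem, which in particular gives existence of the covariance $P(t\vert t)$ from the known existence and uniqueness of the Riccati solution.

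Next I would verify the hypotheses of Corollary \ref{coro} for the dual problem. The role of the input space is played by $\mathcal Y$, which is finite-dimensional by assumption. For the output space, since $Q(s)=D(s)W(s)D^*(s)$ factors through the finite-dimensional noise space $\mathcal E$, I would take the natural factorization $C(s):=W^{\frac{1}{2}}(b-s)D^*(b-s)$ as an element of $L^\infty_s(t_0,b;\mathcal X,\mathcal E)$, so that $C^*(s)C(s)=Q(b-s)$ and the dual output space $\mathcal E$ is also finite-dimensional. The terminal weight $G=P(t_0\vert t_{-1})$ is nuclear by hypothesis, and $F(s)=R(b-s)$ is positive definite because $V(t)$ is coercive and $E$ is bounded (so that the standing assumption $F^{-1}\in L^\infty_s$ is met).

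With all assumptions of Corollary \ref{coro} in force, the dual Riccati operator $\Pi(t)$ is nuclear for every $t\in[t_0,b]$, and consequently $P(t\vert t)=\Pi(b-t)$ is a nuclear operator, as claimed. The step I expect to require the most care, and which constitutes the main technical obstacle, is checking that $\widetilde T(t,s):=M^*(b-s,b-t)$ really is a mild evolution operator on $\Gamma_{t_0}^{b}$ in the sense of Definition \ref{mild evolution}: one must transfer each of the four properties of $M(\cdot,\cdot)$ to its adjoint under the time-reversal $s\mapsto b-s$. The identity and composition laws follow from $(M(t,r)M(r,s))^*=M^*(r,s)M^*(t,r)$; the strong continuity of the adjoint is immediate on a separable Hilbert space from strong continuity of $M(\cdot,\cdot)$ together with uniform boundedness; and the uniform bound on $\widetilde T$ coincides with $\lambda$ of $M$. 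A parallel routine check confirms that $H^*(b-\cdot)$ and $C$ above lie in the appropriate strongly measurable $L^\infty$ classes, after which Corollary \ref{coro} applies directly.
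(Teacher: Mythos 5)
Your proposal is correct and follows essentially the paper's own route: the paper likewise proves this theorem purely by the duality correspondence $T(t,s)=M^*(b-s,b-t)$, $B(s)=H^*(b-s)$, $G=P(t_0\vert t_{-1})$, $F(s)=R(b-s)$ stated just before the theorem, and then invokes Corollary \ref{coro}. Your factorization $C(s)=W^{\frac{1}{2}}(b-s)D^*(b-s)$, which makes the dual output space the finite-dimensional $\mathcal E$, is in fact a more careful choice than the paper's $C(s)=Q^{\frac{1}{2}}(b-s)$ (whose range space is $\mathcal X$), and your check that $M^*(b-\cdot,b-\cdot)$ is again a mild evolution operator is exactly the routine verification the paper leaves implicit.
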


\section{Kalman smoother in Hilbert spaces}\label{ks on hilbert}
\allowdisplaybreaks
In this section, we study the optimal linear unbiased estimation of $x(\tau)$ based on $Y_t$ by $\hat x(\tau\vert t)$, $\tau\leqslant t$.
We still constrain the linear estimation of $x(\tau\vert t)$ has the form
\begin{equation}\label{linear ks}
\hat x(\tau\vert t)=\int_{t_0}^tK_s(\tau,s)[y(s)-H(s)\hat x(s|s)]ds, \quad \tau\leqslant t,
\end{equation}
where $K_s(\cdot,\cdot)$ is an unknown linear operator.

Since in the case $\tau=t$, \eqref{linear ks} with the minimal covariance is equivalent to the optimal linear unbiased estimation based on Kalman filter, in order to determine the optimal estimation of $\hat x(\tau\vert t)$, $\tau\leqslant t$, we can rewrite \eqref{linear ks} as
\begin{equation}\label{estimation smoother}
\hat x(\tau\vert t)=\hat x(\tau\vert \tau)+\int_{\tau}^tK_s(\tau,s)[y(s)-H(s)\hat x(s|s)]ds.
\end{equation}

\begin{theorem}\label{gain of ks}
For the time-varying system \eqref{state} with the observation system \eqref{observation},  the linear unbiased estimation of the filter problem $\hat x(\tau\vert t)$ of $x(\tau)$ is optimal if $K_s(\cdot,\cdot)$ in \eqref{estimation} is given by
$$
K_s(\tau,\eta)=P(\tau\vert \tau)M_K^*(\eta,\tau)H^*(\eta)R^{-1}(\eta),\quad \tau\leqslant \eta\leqslant t.
$$
\end{theorem}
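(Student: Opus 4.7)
The plan is to mimic the Wiener--Hopf argument used in Theorem~\ref{kalman gain}, but applied to the smoother error $\tilde x(\tau\vert t)=x(\tau)-\hat x(\tau\vert t)$ tested against the innovation $y(\eta)-H(\eta)\hat x(\eta\vert\eta)$ at the future times $\eta\in[\tau,t]$. By Wiener--Hopf, the estimator of the form \eqref{linear ks} is optimal if and only if
$\mathbf E\langle \tilde x(\tau\vert t),h_1\rangle\langle y(\eta)-H(\eta)\hat x(\eta\vert\eta),h_2\rangle=0$
for every $\eta\in[t_0,t]$ and every $h_1,h_2\in\mathcal X$. For $\eta\leqslant\tau$, by writing $\hat x(\tau\vert t)=\hat x(\tau\vert\tau)+\int_{\tau}^{t}K_s(\tau,s)[y(s)-H(s)\hat x(s\vert s)]\,ds$ as in \eqref{estimation smoother}, the condition reduces to the Kalman filter orthogonality already established in Theorem~\ref{kalman gain} combined with the mutual uncorrelatedness of the innovations at disjoint times. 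So it suffices to verify the orthogonality for $\tau\leqslant\eta\leqslant t$.

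For this range I would first establish the two-parameter propagation identity
\begin{equation*}
\tilde x(\eta\vert\eta)=M_K(\eta,\tau)\tilde x(\tau\vert\tau)+\int_{\tau}^{\eta}M_K(\eta,s)\bigl[D(s)\omega(s)-K(s)E(s)\nu(s)\bigr]\,ds,\quad \eta\geqslant\tau,
\end{equation*}
which follows from applying \eqref{diff x(t)} with $t_0$ replaced by $\tau$; this is legitimate because $M_K(\cdot,\cdot)$ is a two-parameter mild evolution operator on $\Gamma_{t_0}^{b}$. The key feature is that the remainder integral involves only white-noise increments on $[\tau,\eta]$, which are independent of $\tilde x(\tau\vert\tau)$.

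Next I would compute both sides of the orthogonality relation exactly as in the proof of Theorem~\ref{kalman gain}. Decomposing $x(\tau)=\hat x(\tau\vert\tau)+\tilde x(\tau\vert\tau)$, invoking \cite[Corollary~6.3]{Curtain78} and the independence of $\nu(\eta)$ and $\{\omega(s),\nu(s):s\in[\tau,\eta]\}$ from $\tilde x(\tau\vert\tau)$, one obtains
\begin{equation*}
\mathbf E\langle x(\tau),h_1\rangle\langle y(\eta)-H(\eta)\hat x(\eta\vert\eta),h_2\rangle=\langle h_1,P(\tau\vert\tau)M_K^*(\eta,\tau)H^*(\eta)h_2\rangle.
\end{equation*}
Substituting \eqref{estimation smoother} into the other side and using whiteness of $\nu$ together with the filter orthogonality on the cross terms, only the pointwise $s=\eta$ contribution $K_s(\tau,\eta)E(\eta)V(\eta)E^*(\eta)$ survives, producing $\langle h_1,K_s(\tau,\eta)R(\eta)h_2\rangle$. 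Equating the two and using coercivity of $R$ yields $K_s(\tau,\eta)=P(\tau\vert\tau)M_K^*(\eta,\tau)H^*(\eta)R^{-1}(\eta)$ for $\tau<\eta\leqslant t$; the endpoint $\eta=\tau$ is recovered from the strong continuity of $K_s(\tau,\cdot)$, exactly as in Theorem~\ref{kalman gain}.

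The main obstacle is the rigorous justification, in Hilbert space and in the mild integral formulation, of the collapse of the double-integral cross term to the single diagonal contribution at $s=\eta$; in other words, the whiteness of the innovation process. This mirrors the delicate step in the filter proof and follows from independence of the $\omega$- and $\nu$-increments on disjoint time intervals together with the Kalman filter orthogonality $\mathbf E\langle\hat x(s\vert s),\cdot\rangle\langle\tilde x(s\vert s),\cdot\rangle=0$, pointwise in $s$.
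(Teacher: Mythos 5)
Your proposal is correct and follows essentially the same route as the paper: the Wiener--Hopf orthogonality of the smoother error against the innovations, the propagation of the filter error by the perturbed evolution operator $M_K$ from time $\tau$ to $\eta$, whiteness/independence of the noises to reduce the cross term to the diagonal contribution $K_s(\tau,\eta)R(\eta)$, and coercivity of $R$ to solve for $K_s$. The only cosmetic difference is the intermediate decomposition (you split $x(\tau)=\hat x(\tau\vert\tau)+\tilde x(\tau\vert\tau)$ where the paper subtracts $\hat x(\tau\vert\eta)$), which leads to the same identities.
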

\begin{proof}
By Wiener-Hopf's equation \cite{Kalman61}, \cite{Falb67},
$E\langle\tilde x(\tau\vert t),h_1\rangle\langle y(\eta)-H(\eta)\hat x(\eta|\eta), h_2\rangle=0$, $h_1\in \mathcal X, h_2\in \mathcal Y$, for any $\eta<t$.
It is clear for any $\eta<\tau$,  $E\langle\tilde x(\tau\vert t),h_1\rangle\langle y(\eta)-H(\eta)\hat x(\eta|\eta), h_2\rangle=0$ holds.
Now we assume $\tau\leqslant\eta<t$. On one hand,
\begin{eqnarray*}
&&\mathbf E\langle x(\tau),h_1\rangle\langle y(\eta)-H(\eta)\hat x(\eta|\eta),h_2\rangle\\
&=&\mathbf E\langle x(\tau)-\hat x(\tau|\eta),h_1\rangle\langle H(\eta)\tilde x(\eta\vert \eta),h_2\rangle\\
&=&\mathbf E\langle \tilde x(\tau\vert \tau)-\int_{\tau}^{\eta}K_s(\tau,s)[y(s)-H(s)\hat x(s\vert s)]ds,h_1\rangle\langle H(\eta)\tilde x(\eta\vert \eta),h_2\rangle\\
&=&\mathbf E\langle \tilde x(\tau\vert \tau), h_1\rangle\langle H(\eta)M_K(\eta,\tau)H^*(\eta)\tilde x(\tau\vert \tau),h_2\rangle=\langle h_1, P(\tau\vert \tau)M_K^*(\eta,\tau)H^*(\eta)h_2\rangle.
\end{eqnarray*}
On the other hand,
\begin{eqnarray*}
&&\mathbf E\langle \hat x(\tau|t),h_1\rangle\langle  y(\eta)-H(\eta)\hat x(\eta|\eta),h_2\rangle\\
&=&\mathbf E\langle \hat x(\tau|\eta),h_1\rangle\langle H(\eta)\tilde x(\eta|\eta)+E(\eta)\nu(\eta),h_2\rangle\\
&&+\mathbf E\langle \int_{\eta}^tK_s(\tau,s)[y(s)-H(s)\hat x(s)]ds, h_1\rangle
\langle   y(\eta)-H(\eta)\hat x(\eta|\eta),h_2\rangle\\
&=&\mathbf E\langle \int_{\tau}^tK_s(s,\tau)E(s)\nu(s)ds, h_1\rangle\langle E(\eta)\nu(\eta),h_2\rangle=\langle h_1, K_s(\tau,\eta)R(\eta)h_2\rangle.
\end{eqnarray*}
By the coercivity of $R(t)$, we obtain
$
K_s(\tau,\eta)=P(\tau\vert \tau)M_K^*(\eta,\tau)H^*(\eta)R^{-1}(\eta).
$
\end{proof}

Defining $\tilde x(\tau \vert t)=x(\tau)-\hat x(\tau\vert t)$, Theorem \ref{gain of ks} implies
\begin{eqnarray*}
\tilde x(\tau \vert t)=\tilde x(\tau\vert \tau)-P(\tau\vert \tau)\int_{\tau}^tM_K^*(s,\tau)H^*(s)R^{-1}(s)[y(s)-H(s)\hat x(s\vert s)]ds.
\end{eqnarray*}

Thus, its covariance can be derived by
\begin{theorem}
The covariance (if exists) of $\tilde x(\tau\vert t)$, $(t,\tau)\in\Gamma_{t_0}^{b}$ is
\begin{align}\label{kalman smoother}
&P(\tau\vert t)x=P(\tau\vert \tau)x\\&-P(\tau\vert \tau)\int_{\tau}^tM_K^*(s,\tau)H^*(s)R^{-1}(s)H(s)M_K(s,\tau)P(\tau\vert \tau)xds,\quad x\in \mathcal X.\nonumber
\end{align}
\end{theorem}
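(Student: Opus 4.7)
My plan is to exploit the Wiener-Hopf orthogonality relation already used in the proof of Theorem \ref{gain of ks}, combined with the semigroup property of $M_K(\cdot,\cdot)$, to reduce the second-moment computation to a bilinear form involving only $\tilde x(\tau\vert\tau)$. Starting from the representation $\tilde x(\tau\vert t)=\tilde x(\tau\vert\tau)-W$ with
$W:=P(\tau\vert\tau)\int_{\tau}^t M_K^*(s,\tau)H^*(s)R^{-1}(s)[y(s)-H(s)\hat x(s\vert s)]\,ds$,
I would first observe that $\langle W,h_2\rangle$ is a Lebesgue integral of inner products against the innovation process $s\mapsto y(s)-H(s)\hat x(s\vert s)$. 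Consequently the same Wiener-Hopf identity that was used to derive the smoother gain in Theorem \ref{gain of ks} yields $\mathbf E\langle\tilde x(\tau\vert t),h_1\rangle\langle W,h_2\rangle=0$ for all $h_1,h_2\in\mathcal X$.

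Using this orthogonality, I get $\mathbf E\langle\tilde x(\tau\vert t),h_1\rangle\langle\tilde x(\tau\vert t),h_2\rangle=\mathbf E\langle\tilde x(\tau\vert t),h_1\rangle\langle\tilde x(\tau\vert\tau),h_2\rangle$, and expanding the remaining factor of $\tilde x(\tau\vert t)$ once more produces $\langle P(\tau\vert t)h_1,h_2\rangle=\langle P(\tau\vert\tau)h_1,h_2\rangle-\mathbf E\langle W,h_1\rangle\langle\tilde x(\tau\vert\tau),h_2\rangle$. To evaluate the remaining cross term I would substitute $y(s)-H(s)\hat x(s\vert s)=H(s)\tilde x(s\vert s)+E(s)\nu(s)$ and invoke the decomposition $\tilde x(s\vert s)=M_K(s,\tau)\tilde x(\tau\vert\tau)+\int_\tau^s M_K(s,r)[D(r)\omega(r)-K(r)E(r)\nu(r)]\,dr$ valid for $s\geqslant\tau$, which is obtained by splitting \eqref{diff x(t)} at time $\tau$ and using the evolution-operator identity $M_K(s,\tau)M_K(\tau,t_0)=M_K(s,t_0)$. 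Since the residual integral and the term $E(s)\nu(s)$ depend only on noise increments on $[\tau,s]$, they are independent of $\tilde x(\tau\vert\tau)$ and their contributions to the cross term vanish; only the $H(s)M_K(s,\tau)\tilde x(\tau\vert\tau)$ part survives, producing precisely the integrand required by \eqref{kalman smoother}.

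The main obstacle I anticipate is a clean justification of the Markov-type decomposition of $\tilde x(s\vert s)$, since it is not a direct re-reading of \eqref{diff x(t)} but rather requires the semigroup property of $M_K$ together with the additivity of the stochastic integral across the split point $\tau$, followed by an independence argument that uses the whiteness of $\omega$ and $\nu$ together with the fact that $\tilde x(\tau\vert\tau)$ is built only from noise increments on $[t_0,\tau]$ and the initial error $\tilde x(t_0\vert t_{-1})$. Once that decomposition is in hand, the remainder is bookkeeping: bilinearity and the definition of $P(\tau\vert\tau)$ collapse the surviving piece of the cross term to $\int_\tau^t\langle P(\tau\vert\tau)M_K^*(s,\tau)H^*(s)R^{-1}(s)H(s)M_K(s,\tau)P(\tau\vert\tau)h_1,h_2\rangle ds$, which is exactly the integral appearing in \eqref{kalman smoother}.
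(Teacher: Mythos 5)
Your proposal is correct and follows essentially the same route as the paper's own proof: the splitting $\tilde x(\tau\vert t)=\tilde x(\tau\vert\tau)-W$, the Wiener--Hopf orthogonality to annihilate the cross term with the innovation integral $W$, and the decomposition $\tilde x(s\vert s)=M_K(s,\tau)\tilde x(\tau\vert\tau)+\int_\tau^s M_K(s,\eta)\left[D(\eta)\omega(\eta)-K(\eta)E(\eta)\nu(\eta)\right]d\eta$ together with the independence of the noise on $[\tau,s]$ (and of $E(s)\nu(s)$) from $\tilde x(\tau\vert\tau)$ are exactly the steps used there. The only difference is which factor you strip $W$ from first, which is immaterial by symmetry of the real inner product.
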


\begin{proof}
Denoting the covariance of $\tilde x(\tau\vert t)$ by $P(\tau\vert t)$, we obtain
\allowdisplaybreaks
\begin{align*}
&\langle h_1, P(\tau\vert t)h_2\rangle=\mathbf E\langle\tilde x(\tau\vert t),h_1\rangle\langle\tilde x(\tau\vert t),h_2\rangle\\
&=\mathbf E\langle\tilde x(\tau\vert \tau)-P(\tau\vert \tau)\int_{\tau}^tM_K^*(s,\tau)H^*(s)R^{-1}(s)[y(s)-H(s)\hat x(s)]ds,h_1\rangle\langle\tilde x(\tau\vert t), h_2\rangle\\
&=\mathbf E\langle\tilde x(\tau\vert \tau),h_1\rangle\langle\tilde x(\tau\vert \tau)\\&-P(\tau\vert \tau)\int_{\tau}^tM_K^*(s,\tau)H^*(s)R^{-1}(s)[y(s)-H(s)\hat x(s\vert s)]ds, h_2\rangle\\
&=\mathbf E\langle\tilde x(\tau\vert \tau),h_1\rangle\langle\tilde x(\tau\vert \tau),h_2\rangle\\
&-\mathbf E\langle\tilde x(\tau\vert \tau),h_1\rangle\langle P(\tau\vert \tau)\int_{\tau}^tM_K^*(s,\tau)H^*(s)R^{-1}(s)H(s)\tilde x(s\vert s)ds, h_2\rangle\\
&=\langle h_1, P(\tau\vert \tau)h_2\rangle-E\langle\tilde x(\tau\vert \tau),h_1\rangle\langle P(\tau\vert \tau)\int_{\tau}^tM_K^*(s,\tau)H^*(s)R^{-1}(s)H(s)\\
&\cdot\left(M_K(s,\tau)\tilde x(\tau\vert \tau)+\int_{\tau}^{s}M_K(s,\eta)\left(D(\eta)\omega(\eta)-K(\eta)E(\eta)\nu(\eta)\right)d\eta\right)ds, h_2\rangle\\
&=\langle h_1, P(\tau\vert \tau)h_2\rangle\\
&-\mathbf E\langle\tilde x(\tau\vert \tau),h_1\rangle\langle P(\tau\vert \tau)\int_{\tau}^tM_K^*(s,\tau)H^*(s)R^{-1}(s)H(s)M_K(s,\tau)\tilde x(\tau\vert \tau)ds, h_2\rangle\\
&=\langle h_1, P(\tau\vert \tau)h_2\rangle-\langle h_1,  P(\tau\vert \tau)\int_{\tau}^tM_K^*(s,\tau)H^*(s)R^{-1}(s)H(s)M_K(s,\tau)P(\tau\vert \tau)h_2ds\rangle.
\end{align*}
Hence, for any $x\in \mathcal X$, $(t,\tau)\in\Gamma_{t_0}^{b}$, we get
\begin{equation*}
P(\tau\vert t)x=P(\tau\vert \tau)x- P(\tau\vert \tau)\int_{\tau}^tM_K^*(s,\tau)H^*(s)R^{-1}(s)H(s)M_K(s,\tau)P(\tau\vert \tau)xds.
\end{equation*}
\end{proof}

\begin{theorem}\label{nuclear ks}
For the time-varying system \eqref{state} with the observation system \eqref{observation}, if $\mathcal E$ and  $\mathcal Y$ are finite dimensional and $P(t_0\vert t_{-1})$ is a nuclear operator, then $P(\tau\vert t)$, $(t,\tau)\in\Gamma_{t_0}^{b}$ satisfying \eqref{kalman smoother} exists and is a nuclear operator.
\end{theorem}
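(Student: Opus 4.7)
The plan is to read $P(\tau\vert t)$ off the integral equation \eqref{kalman smoother} as a nuclear operator, combining Theorem \ref{nuclear kf} with the two-sided ideal property of the trace class in $\mathcal L(\mathcal X)$.

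First I would invoke Theorem \ref{nuclear kf}: under the present hypotheses ($\mathcal E$, $\mathcal Y$ finite-dimensional and $P(t_0\vert t_{-1})$ nuclear), the Kalman filter covariance $P(\tau\vert\tau)$ exists and is a nuclear operator on $\mathcal X$ for every $\tau\in[t_0,b]$. Next I would introduce
$$A(t,\tau):=\int_{\tau}^{t}M_K^*(s,\tau)H^*(s)R^{-1}(s)H(s)M_K(s,\tau)\,ds$$
and verify that it is a bounded self-adjoint operator. Indeed, $M_K(\cdot,\cdot)$ is a mild evolution operator and hence uniformly bounded on $\Gamma_{t_0}^{b}$; $H\in L^\infty_s(t_0,b;\mathcal X,\mathcal Y)$; and the coercivity of $V(t)$ combined with $E\in L^\infty_s(t_0,b;\mathcal E,\mathcal Y)$ makes $R^{-1}(t)$ essentially bounded. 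Thus the integrand is strongly measurable with essentially bounded operator norm, so the Bochner integral defining $A(t,\tau)$ exists and is bounded.

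Rewriting \eqref{kalman smoother} in the compact form
$$P(\tau\vert t)=P(\tau\vert\tau)-P(\tau\vert\tau)\,A(t,\tau)\,P(\tau\vert\tau),$$
the two-sided ideal property of the trace class yields that $P(\tau\vert\tau)\,A(t,\tau)\,P(\tau\vert\tau)$ is nuclear, with
$$\Vert P(\tau\vert\tau)\,A(t,\tau)\,P(\tau\vert\tau)\Vert_1\leqslant \Vert P(\tau\vert\tau)\Vert_1\,\Vert A(t,\tau)\Vert\,\Vert P(\tau\vert\tau)\Vert<\infty.$$
Therefore $P(\tau\vert t)$ is nuclear as a difference of two nuclear operators, and this representation simultaneously proves existence. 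The only technical step is the existence of the Bochner integral defining $A(t,\tau)$, which is routine given the strong measurability and uniform norm bound of the integrand; I do not foresee any genuine obstacle, since all heavy lifting has already been done by Theorem \ref{nuclear kf} via the duality with the linear-quadratic control problem.
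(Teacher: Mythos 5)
Your proposal is correct and follows essentially the same route as the paper: invoke Theorem \ref{nuclear kf} for the nuclearity of $P(\tau\vert\tau)$, use the uniform boundedness of $M_K$, $H$ and $R^{-1}$ to control the middle integral factor, and conclude via the trace-class ideal estimate $\Vert P(\tau\vert\tau)A(t,\tau)P(\tau\vert\tau)\Vert_1\leqslant\Vert P(\tau\vert\tau)\Vert_1\Vert A(t,\tau)\Vert\Vert P(\tau\vert\tau)\Vert$. The paper states the same bound (with $\Vert P(\tau\vert\tau)\Vert_1^2$ in place of $\Vert P(\tau\vert\tau)\Vert_1\Vert P(\tau\vert\tau)\Vert$), so there is no substantive difference.
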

\begin{proof}
By Theorem \ref{nuclear kf} and the uniform boundedness of $M_K$, $H$ and $R^{-1}$ in $[t_0,b]$,
\begin{equation*}
\Vert P(\tau\vert t)\Vert_1\leqslant\Vert P(\tau\vert \tau)\Vert_1+\Vert P(\tau\vert \tau)\Vert_1^2\int_{\tau}^t\Vert M_K(s,\tau)\Vert^2 \Vert R^{-1}(s)\Vert\Vert H(s)\Vert^2 ds<\infty,
\end{equation*}
so $P(\tau\vert t)$ is a nuclear operator for any $(t,\tau)\in\Gamma_{t_0}^{b}$.
\end{proof}

\section{Optimal locations of observations based on Kalman filter and smoother}\label{opt loc of kf and ks}

In this section, we also take the observation location problem into account. The location parameter $r$ is defined as in Section \ref{main results of lq}.
The following theorems show the continuity of $P_r(t\vert t)$ and $P_r(\tau\vert t), (t, \tau)\in \Gamma_{t_0}^b$ in nuclear norm.
For the filter problem, due to the duality and Theorem \ref{ex nuclear}, we obtain the following theorem.
\begin{theorem}\label{continuous kf}
Consider the filter problem of the time-varying system \eqref{state} with location-dependent output operators and the observation system \eqref{observation}. If $H_r$ is of the property that $\lim_{r\rightarrow r_0}\Vert H_r-H_{r_0}\Vert_\infty=0$,  $\mathcal E$ and $\mathcal Y$ are finite-dimensional, and $P(t_0\vert t_{-1})$ is nuclear, then
\begin{equation*}
\lim_{r\rightarrow r_0}\Vert P_r(t\vert t)-P_{r_0}(t\vert t)\Vert_1=0, \quad t\in [t_0,b],
\end{equation*}
and there exists an optimal location $\hat{r}^f$ such that,
\begin{equation*}
\hat{\ell}_{1}^f(t)=\Vert P_{\hat{r}^f}(t\vert t)\Vert_1=\inf_{r\in \Omega^m}\Vert P_r(t\vert t)\Vert_1.
\end{equation*}
\end{theorem}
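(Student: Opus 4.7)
The plan is to reduce this theorem to Theorem \ref{ex nuclear} through the duality established at the end of Section \ref{kalman filter}. Recall that under the correspondence $T(t,s)=M^*(b-s,b-t)$, $B(s)=H^*(b-s)$, $G=P(t_0\vert t_{-1})$, $C(s)=Q^{1/2}(b-s)$, $F(s)=R(b-s)$, the covariance $P(t\vert t)$ of the Kalman filter error equals the solution $\Pi(b-t)$ of the second IRE \eqref{ire2} associated with the dual LQ problem. When the output operator is parameterised by $r$, the dual LQ problem has location-dependent input operator $B_r:=H_r^*$, and its Riccati solution $\Pi_r(b-t)$ coincides with $P_r(t\vert t)$.

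With this identification made, I would verify one by one that the hypotheses of Theorem \ref{ex nuclear} are met for the dual LQ problem. First, the assumption $\lim_{r\to r_0}\Vert H_r-H_{r_0}\Vert_\infty=0$ yields $\lim_{r\to r_0}\Vert H_r^*-H_{r_0}^*\Vert_\infty=0$ by the isometric property of the adjoint, so the family $\{B_r\}=\{H_r^*\}$ satisfies the continuity hypothesis. Second, the roles of the input and output spaces in the dual LQ problem are played by $\mathcal Y$ and $\mathcal E$ (since $H_r^*$ maps $\mathcal Y\to \mathcal X$ and the ``$C$'' operator takes values in $\mathcal E$); both are finite-dimensional by assumption, which in particular makes each $H_r^*$ automatically compact, so the compactness hypothesis implicit in Theorem \ref{ex nuclear} (inherited via Theorem \ref{ex operator}) is satisfied. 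Third, the terminal weight in the dual problem is $G=P(t_0\vert t_{-1})$, which is nuclear by hypothesis. Theorem \ref{ex nuclear} then gives
\begin{equation*}
\lim_{r\to r_0}\Vert \Pi_r(b-t)-\Pi_{r_0}(b-t)\Vert_1=0,\quad t\in[t_0,b],
\end{equation*}
which, via the duality, is exactly $\lim_{r\to r_0}\Vert P_r(t\vert t)-P_{r_0}(t\vert t)\Vert_1=0$.

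For existence of the optimal location, the map $r\mapsto \Vert P_r(t\vert t)\Vert_1$ is continuous on $\Omega^m$ by the first part (since the nuclear norm is continuous with respect to itself), and $\Omega^m$ is compact, so this map attains its infimum at some $\hat r^f\in\Omega^m$, giving $\hat\ell_1^f(t)=\Vert P_{\hat r^f}(t\vert t)\Vert_1=\inf_{r\in\Omega^m}\Vert P_r(t\vert t)\Vert_1$.

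The only subtle point I expect is bookkeeping in the duality: one needs to confirm that the time-reversal $s\mapsto b-s$ correctly exchanges the observation interval $[t_0,b]$ used for filtering with the control interval used in the LQ problem, and that the strong measurability and uniform boundedness assumptions required by $L^\infty_s$ classes on $H_r$ transfer to $H_r^*$. These are routine, but the proposal hinges on them being checked carefully. Once the duality is transported cleanly, no independent Kalman-filter argument is needed — Theorem \ref{ex nuclear} does all the analytic work.
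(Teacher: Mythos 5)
Your proposal is correct and follows exactly the paper's route: the paper proves this theorem precisely by invoking the duality $T(t,s)=M^*(b-s,b-t)$, $B_r(s)=H_r^*(b-s)$, $G=P(t_0\vert t_{-1})$ and then applying Theorem \ref{ex nuclear} (the existence of $\hat r^f$ coming from compactness of $\Omega^m$), which is what you do, only spelled out with the hypothesis checks (compactness of the finite-rank $H_r^*$, transfer of the $L^\infty_s$ bounds under the adjoint and time reversal) that the paper leaves implicit.
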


\begin{theorem}\label{continuous ks}
Consider the smoother problem of the time-varying system \eqref{state} with the location-dependent output operators and the observation system \eqref{observation}. 
$H_r$ has the property that $\lim_{r\rightarrow r_0}\Vert H_r-H_{r_0}\Vert_\infty=0$. If $\mathcal E$ and $\mathcal Y$ are finite-dimensional, and $P(t_0\vert t_{-1})$ is nuclear, then,
\begin{equation*}
\lim_{r\rightarrow r_0}\Vert P_r(\tau\vert t)-P_{r_0}(\tau\vert t)\Vert_1=0,  \quad (t,\tau)\in\Gamma_{t_0}^{b},
\end{equation*}
and there exists an optimal location $\hat{r}^s$ such that for any initial time $\tau\in[t_0,b]$, $\tau\leqslant t$,
\begin{equation*}
\hat{\ell}_{1}^s(\tau|t)=\Vert P_{\hat{r}^s}(\tau\vert t)\Vert_1=\inf_{r\in \Omega^m}\Vert P_r(\tau\vert t)\Vert_1.
\end{equation*}
\end{theorem}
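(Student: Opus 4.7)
The plan is to reduce the smoother covariance continuity to the filter covariance continuity already furnished by Theorem \ref{continuous kf}, and then to pass the remaining $r$-dependence through the formula \eqref{kalman smoother} using the perturbation results of Lemma \ref{per lemma} and Corollary \ref{per corollary} together with the ideal property $\Vert AB\Vert_1\leqslant \Vert A\Vert_1\Vert B\Vert$ of the nuclear norm.

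First, I invoke Theorem \ref{continuous kf} to obtain $\Vert P_r(s\vert s)-P_{r_0}(s\vert s)\Vert_1\to 0$ as $r\to r_0$ for every $s\in[t_0,b]$; in particular $P_r(\tau\vert\tau)\to P_{r_0}(\tau\vert\tau)$ in operator norm, and by compactness of $\Omega^m$ there is a neighbourhood of $r_0$ on which $\Vert P_r(s\vert s)\Vert_1$ is uniformly bounded. Next, using $K_r(s)H_r(s)=P_r(s\vert s)H_r^*(s)R^{-1}(s)H_r(s)$, the triangle inequality splits $K_rH_r-K_{r_0}H_{r_0}$ into three pieces each of which tends to zero in operator norm, uniformly in $s$, thanks to $\Vert H_r-H_{r_0}\Vert_\infty\to 0$, the operator-norm convergence of $P_r(s\vert s)$, finite-dimensionality of $\mathcal Y$, and boundedness of $R^{-1}$. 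Corollary \ref{per corollary} then yields $\Vert M_{K_r}(s,\tau)-M_{K_{r_0}}(s,\tau)\Vert\to 0$ for every $(s,\tau)\in\Gamma_{t_0}^b$, with a uniform bound on $\Vert M_{K_r}\Vert$ over a neighbourhood of $r_0$.

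Setting $A_r(s):=M_{K_r}^*(s,\tau)H_r^*(s)R^{-1}(s)H_r(s)M_{K_r}(s,\tau)$, the previous step implies $\Vert A_r(s)-A_{r_0}(s)\Vert\to 0$ pointwise in $s\in[\tau,t]$, with a uniform operator-norm bound. The integrand in \eqref{kalman smoother} with location $r$ reads $P_r(\tau\vert\tau)A_r(s)P_r(\tau\vert\tau)$, which I compare to the $r_0$-integrand by the standard three-term decomposition
\begin{align*}
&P_r(\tau\vert\tau)A_r(s)P_r(\tau\vert\tau)-P_{r_0}(\tau\vert\tau)A_{r_0}(s)P_{r_0}(\tau\vert\tau)\\
&\quad = P_r(\tau\vert\tau)A_r(s)(P_r(\tau\vert\tau)-P_{r_0}(\tau\vert\tau))\\
&\quad\quad + P_r(\tau\vert\tau)(A_r(s)-A_{r_0}(s))P_{r_0}(\tau\vert\tau)\\
&\quad\quad + (P_r(\tau\vert\tau)-P_{r_0}(\tau\vert\tau))A_{r_0}(s)P_{r_0}(\tau\vert\tau).
\end{align*}
Applying $\Vert XY\Vert_1\leqslant\Vert X\Vert_1\Vert Y\Vert$ (or its mirror) to each term and using the uniform bounds from the preceding steps shows pointwise convergence in nuclear norm of the integrand, dominated by an integrable function on $[\tau,t]$. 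Dominated convergence in the nuclear-norm ideal then gives convergence of the integral in $\Vert\cdot\Vert_1$, and combining with Step 1 establishes $\Vert P_r(\tau\vert t)-P_{r_0}(\tau\vert t)\Vert_1\to 0$.

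Finally, since $r\mapsto\Vert P_r(\tau\vert t)\Vert_1$ is continuous on the compact set $\Omega^m$, the infimum is attained at some $\hat r^s\in\Omega^m$, giving $\hat\ell_1^s(\tau\vert t)=\Vert P_{\hat r^s}(\tau\vert t)\Vert_1$. The main obstacle is the third step: the integrand of \eqref{kalman smoother} is trilinear in $r$-dependent objects of mixed regularity (nuclear $P_r$, operator-norm $M_{K_r}$ and $H_r$), so care is needed to route convergence through the correct ideal factor in each term; this is precisely what the nuclear-norm ideal inequality accomplishes, provided one has already secured operator-norm convergence of $M_{K_r}$ through Corollary \ref{per corollary}, whose hypotheses in turn rely on the filter-level Theorem \ref{continuous kf}.
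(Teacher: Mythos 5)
Your proposal is correct and follows essentially the same route as the paper: both reduce to the filter-level continuity of Theorem \ref{continuous kf}, obtain operator-norm convergence of $M_{K,r}$ via the perturbation result, and then pass the difference of the integrands in \eqref{kalman smoother} through the ideal inequality $\Vert AB\Vert_1\leqslant\Vert A\Vert_1\Vert B\Vert$ before applying dominated convergence and compactness of $\Omega^m$; your three-term sandwich decomposition versus the paper's two-factor grouping is only a cosmetic difference, and you are in fact more explicit than the paper about the $M_{K,r}$ step. Two small touch-ups: pointwise-in-$s$ convergence of $K_rH_r$ (not uniform in $s$) is all Corollary \ref{per corollary} needs, and the uniform bound on $\Vert P_r(s\vert s)\Vert$ near $r_0$ comes from the dual LQ bound $\lambda_\Pi$ rather than from compactness of $\Omega^m$.
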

\begin{proof}
From Lemma \ref{nuclear ks}, $P_r(\tau\vert t)$, $r\in\Omega^m$ are nuclear operators. Hence,
\allowdisplaybreaks
\begin{align*}
&\Vert P_r(\tau\vert t)-P_{r_0}(\tau\vert t)\Vert_1\leqslant\Vert P_r(\tau\vert \tau)-P_{r_0}(\tau\vert \tau)\Vert_1\\
&+\int_{t_0}^t \Vert P_{r_0}(\tau\vert \tau)M_{K,r_0}^*(s,\tau)H_{r_0}^*(s)-P_r(\tau\vert \tau)M_{K,r}^*(s,\tau)H_r^*(s)\Vert\\
&\cdot\Vert R^{-1}(s)H_{r_0}(s)M_{K,r_0}(s,\tau)P_{r_0}(\tau\vert \tau)\Vert_1 ds+\int_{t_0}^t \Vert P_{r}(\tau\vert \tau)M_{K,r}^*(s,\tau)H_{r}^*(s)R^{-1}(s)\Vert_1\\
&\cdot\Vert H_{r_0}(s)M_{K,r_0}(s,\tau)P_{r_0}(\tau\vert \tau)-H_{r}(s)M_{K,r}(s,\tau)P_{r}(\tau\vert \tau)\Vert ds,
\end{align*}

Since $P_r(t), r\in\Omega^m$ are nuclear operators and $R^{-1}(t), H_{r}(t), M_{K,r_0}(t,\tau)$ are uniformly bounded for $(t,\tau)\in\Gamma_{t_0}^{b}$, then
$
\Vert R^{-1}(s)H_{r_0}(s)M_{K,r_0}(s,\tau)P_{r_0}(\tau\vert \tau)\Vert_1<\infty
$
and so is its adjoint.

By Theorem \ref{continuous kf} and dominated convergence theorem, we obtian
\begin{equation*}
\Vert P_r(\tau\vert t)-P_{r_0}(\tau\vert t)\Vert_1\rightarrow 0,\quad r\rightarrow r_0.
\end{equation*}
Because of the compactness of $\Omega^m$, there exists the optimal location of observations such that
$
\hat{\ell}_{1}^s(\tau|t)=\Vert P_{\hat{r}^s}(\tau\vert t)\Vert_1=\inf_{r\in \Omega^m}\Vert P_r(\tau\vert t)\Vert_1.
$
\end{proof}

Next we consider a sequence of approximations of time-varying systems in order to study the convergence of optimal observation locations based on Kalman filter and smoother. 
Let ${\mathcal X_n}$ be a family of finite-dimensional subspaces of $\mathcal X$ and $\mathbf P_n$ be the corresponding orthogonal projection of $\mathcal X$ onto $\mathcal X_n$. The finite spaces $\{\mathcal X_n\}$ inherit the norm from
$\mathcal X$. For $n\in\mathds N$, let $M_n(\cdot,\cdot)$ be a mild evolution operator on $\mathcal X_n$, $D_n(t)=\mathbf P_nD(t)$ and $H_n(t)=H(t)\mathbf P_n$, $t\in[t_0,b]$.
In order to guarantee that $P_n(t\vert t)$ converges to $P(t\vert t)$, the following assumptions are needed in the approximation of observation problems for partial differential equations. For each $x\in X$, $\omega\in \mathcal E$, $y\in \mathcal Y$\\
(A1) $(i)\quad  M_n(t,s)\mathbf P_nx\rightarrow M(t,s)x;$\quad
    $(ii)\quad  M_n^*(t,s)\mathbf P_nx\rightarrow M^*(t,s)x$\\
and $\sup_{n}\Vert M_n(t,s)\Vert<\infty$, for any $(t,s)\in \Gamma_{t_0}^{b}$.\\
(A2) $(i)\quad   D_n(t)\omega\rightarrow D(t)\omega;$ \quad
    $(ii)\quad   D_n^{*}(t)\mathbf P_nx\rightarrow D^{*}(t)x, \  a.e.\quad  t\in[t_0,b].$\\
(A3) $(i)\quad  H_n(t)\mathbf P_nx\rightarrow H(t)x;$ \quad  $(ii)\quad   H_n^*(t)y\rightarrow H^*(t)y, \  a.e.\quad  t\in[t_0,b].$\\
(A4) $ P_n(t_0\vert t_{-1})\mathbf P_nx\rightarrow P(t_0\vert t_{-1})x$ and $\sup_{n}\Vert P_n(t_0\vert t_{-1})\Vert<\infty$.

The next theorem shows the uniform convergence of the approximations of covariances of the Kalman filter and smoother in nuclear norm.
\begin{theorem}\label{smoother app2}
Assume that the assumptions $(A1)-(A4)$ are satisfied. If $\mathcal E$ and $\mathcal Y$ are finite-dimensional, $\lim_{n\rightarrow \infty}\Vert P_n(t_0\vert t_{-1})\mathbf P_n-P(t_0\vert t_{-1})\Vert_1=0$ and  $P(t_0\vert t_{-1})$ is nuclear, then
\begin{align*}
&\lim_{n\rightarrow \infty}\Vert P_n(t\vert t)\mathbf P_n-P(t \vert t)\Vert_1=0, \\
&\lim_{n\rightarrow \infty}\Vert P_n(\tau\vert t)\mathbf P_n-P(\tau \vert t)\Vert_1=0, \quad (t,\tau)\in \Gamma_{t_0}^b.
\end{align*}
\end{theorem}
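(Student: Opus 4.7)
The plan is to treat the filter statement and the smoother statement separately, handling the filter case by invoking the duality with the LQ approximation theory and then bootstrapping to the smoother case via the explicit formula \eqref{kalman smoother}.

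First, I would establish $\lim_{n\to\infty}\Vert P_n(t\vert t)\mathbf P_n-P(t\vert t)\Vert_1=0$ by duality. The discussion at the end of Section \ref{kalman filter} shows that the covariance equation \eqref{cov kf} matches the second integral Riccati equation \eqref{ire2} under the correspondences $T(t,s)=M^*(b-s,b-t)$, $B(s)=H^*(b-s)$, $G=P(t_0\vert t_{-1})$, $C(s)=Q^{1/2}(b-s)$, $F(s)=R(b-s)$. Under these identifications, assumptions (A1)--(A4) translate precisely to (a1)--(a4) from Section \ref{approximation of opt control}: (A1) gives (a1) after time-reversal and adjoint; (A3) gives (a2); (A2) gives (a3) using $Q=DWD^*$ and finite-dimensional $\mathcal{E}$; and (A4) together with the hypothesis $\Vert P_n(t_0\vert t_{-1})\mathbf P_n-P(t_0\vert t_{-1})\Vert_1\to 0$ yields (a4) plus the terminal-operator nuclear convergence. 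Since $\mathcal{E}$ and $\mathcal{Y}$ are finite-dimensional, the dual input/output spaces are finite-dimensional, so Theorem \ref{approximation nuclear} applies directly and gives the filter claim.

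Second, I would treat the smoother case using the closed form \eqref{kalman smoother}. Writing $X(s):=M_K^*(s,\tau)H^*(s)R^{-1}(s)H(s)M_K(s,\tau)$ and $X_n(s)$ for its approximation, the triangle inequality in nuclear norm yields
\begin{align*}
\Vert P_n(\tau\vert t)\mathbf P_n-P(\tau\vert t)\Vert_1
\leqslant{}& \Vert P_n(\tau\vert \tau)\mathbf P_n-P(\tau\vert \tau)\Vert_1 \\
&+\int_\tau^t\Vert P_n(\tau\vert \tau)\mathbf P_n X_n(s)P_n(\tau\vert \tau)\mathbf P_n-P(\tau\vert \tau)X(s)P(\tau\vert \tau)\Vert_1\,ds.
\end{align*}
The first term vanishes by the filter step. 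For the integrand, I would split it via a further triangle inequality into three pieces $(P_n\mathbf P_n-P)X_nP_n\mathbf P_n$, $P(X_n-X)P_n\mathbf P_n$, and $PX(P_n\mathbf P_n-P)$, and bound each using $\Vert AB\Vert_1\leqslant\Vert A\Vert_1\Vert B\Vert$ and $\Vert AB\Vert_1\leqslant\Vert A\Vert\,\Vert B\Vert_1$. The outer two pieces go to zero from the filter result, provided $\sup_n\Vert X_n\Vert_\infty<\infty$ and $\sup_n\Vert P_n(\tau\vert \tau)\Vert<\infty$, both of which follow from (A1)--(A4) and Lemma \ref{per lemma} applied to the perturbation of $M_n$ by the Kalman gain $K_n=P_n(\cdot\vert \cdot)H_n^*R^{-1}$.

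For the middle piece $\Vert P(\tau\vert \tau)(X_n-X)P_n(\tau\vert \tau)\mathbf P_n\Vert_1$, I would use self-adjointness of $X_n$ and $X$ together with $P(\tau\vert \tau)=P(\tau\vert \tau)^*$ to rewrite it (after taking adjoints) as $\Vert P_n(\tau\vert \tau)\mathbf P_n(X_n-X)P(\tau\vert \tau)\Vert_1$, and then apply the nuclear-convergence lemma stated just before Theorem \ref{approximation nuclear}: since $P(\tau\vert \tau)$ is nuclear by Theorem \ref{nuclear kf} and $X_n\to X$ strongly with uniform bound (which requires checking strong convergence of $M_{K,n}\mathbf P_n$ to $M_K$ via Lemma \ref{per lemma}, using the already-established convergences of $H_n$, $P_n(\cdot\vert \cdot)\mathbf P_n$, and $M_n$), this gives $\Vert(X_n-X)P(\tau\vert \tau)\Vert_1\to 0$. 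Finally, pointwise convergence of the integrand on $[\tau,t]$ together with a uniform nuclear-norm bound yields the conclusion via the dominated convergence theorem. The main obstacle is precisely this middle step: operator-norm convergence of $X_n-X$ is not available, so one must route through the adjoint identity for the nuclear norm and invoke the nuclear-convergence lemma, and one must verify that the compounded perturbed evolution $M_{K,n}$ converges strongly despite the perturbing feedback $K_n$ itself varying with $n$.
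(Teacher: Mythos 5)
Your proposal is correct, and the filter half is exactly the paper's argument: the paper also settles $\Vert P_n(t\vert t)\mathbf P_n-P(t\vert t)\Vert_1\rightarrow 0$ by the duality with the LQ problem and Theorem \ref{approximation nuclear} (with $B(s)=H^*(b-s)$, $B_n=\mathbf P_nB$, $G=P(t_0\vert t_{-1})$, so the extra hypotheses of that theorem are exactly the stated ones). For the smoother half the strategy is the same (estimate through the formula \eqref{kalman smoother} and integrate over the finite interval), but your decomposition of the integrand differs from the paper's. The paper splits the product into only two pieces, cutting at $R^{-1}$: the operator-norm difference of $P(\tau\vert\tau)M_K^*H^*$ versus $\mathbf P_nP_n(\tau\vert\tau)M_{K,n}^*H_n^*$ multiplied by the nuclear norm of $R^{-1}HM_KP(\tau\vert\tau)$, plus the uniformly bounded nuclear norm of $\mathbf P_nP_n(\tau\vert\tau)M_{K,n}^*H_n^*R^{-1}$ multiplied by the operator-norm difference of $HM_KP(\tau\vert\tau)$ versus $H_nM_{K,n}P_n(\tau\vert\tau)\mathbf P_n$; these differences vanish because the filter step gives $\Vert P_n\mathbf P_n-P\Vert\rightarrow 0$, $H$ has finite rank so $\Vert H-H\mathbf P_n\Vert\rightarrow 0$, and $(M_K-M_{K,n})P_n\mathbf P_n\rightarrow 0$ via Lemma \ref{per lemma} together with the compactness of the nuclear $P(\tau\vert\tau)$. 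You instead peel off the outer covariances (three-term telescoping), which leaves the middle factor $X_n-X$ that converges only strongly; your adjoint-symmetry rewriting plus the lemma preceding Theorem \ref{approximation nuclear} handles it legitimately, since $P(\tau\vert\tau)$ is nonnegative and nuclear by Theorem \ref{nuclear kf} and all operators involved are self-adjoint, and the strong convergence and uniform boundedness of $X_n$ follow, as you note, from Lemma \ref{per lemma} applied with the $n$-dependent gains. In short, the paper's grouping never isolates $X_n-X$ and so avoids the nuclear-convergence lemma at the cost of needing operator-norm convergence of the composite $P$-bearing factors, while your grouping invokes that lemma but does not need $\Vert H-H_n\Vert\rightarrow 0$ in norm; both close the argument, and your accounting of the uniform bounds and of dominated convergence matches what the paper uses implicitly.
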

\begin{proof}
Due to the duality between Kalman filter and LQ optimal control problem, according to Theorem \ref{approximation nuclear}, we have
\begin{equation}\label{approximation covergence of kf}
\lim_{n\rightarrow \infty}\Vert P_n(t\vert t)\mathbf P_n-P(t \vert t)\Vert_1=0, \quad (t,\tau)\in\Gamma_{t_0}^b.
\end{equation}
Then,
\begin{align*}
&\Vert  P_n(\tau\vert t)\mathbf P_n-P(\tau\vert t)\Vert_1\leqslant\Vert P_n(\tau\vert \tau)\mathbf P_n-P(\tau\vert \tau)\Vert_1\\
&+\int_{\tau}^t \Vert P(\tau\vert \tau)M_{K}^*(s,\tau)H^*(s)-\mathbf P_nP_n(\tau\vert \tau)M_{K,n}^*(s,\tau)H_n^*(s)\Vert\\
&\cdot\Vert R^{-1}(s)H(s)M_{K}(s,\tau)P(\tau\vert \tau)\Vert_1 ds+\int_{\tau}^t \Vert\mathbf P_n P_n(\tau\vert \tau)M_{K,n}^*(s,\tau)H_{n}^*(s)R^{-1}(s)\Vert_1\nonumber\\
&\cdot\Vert H(s)M_{K}(s,\tau)P(\tau\vert \tau)-H_{n}(s)M_{K,n}(s,\tau)P_n(\tau\vert \tau)\mathbf P_n\Vert ds.
\end{align*}
where, according to Lemma \ref{per lemma} and \eqref{approximation covergence of kf},
\begin{eqnarray*}\label{app ineq1}
&&\Vert H(s)M_{K}(s,\tau)P(\tau\vert \tau)-H_{n}(s)M_{K,n}(s,\tau) P_n(\tau\vert \tau)\mathbf P_n\Vert\nonumber\\
&\leqslant&\Vert H(s)M_{K}(s,\tau)\Vert\Vert P(\tau\vert \tau)-P_n(\tau\vert \tau)\mathbf P_n\Vert+\Vert H(s)-H_n(s)\Vert \Vert M_{K}(s,\tau)P_n(\tau\vert \tau)\mathbf P_n\Vert\nonumber\\
&&+ \Vert H_n(s)\Vert \Vert (M_{K}(s,\tau)-M_{K,n}(s,\tau)) P_n(\tau\vert \tau)\mathbf P_n\Vert\rightarrow 0, \quad  n\rightarrow\infty.
\end{eqnarray*}
and so is its adjoint operator.

By the uniform boundedness of $P(t\vert t)$, $M_{K}(t, s)$, $H_n(t)$ for $t\in[t_0,b]$, we have
$
\Vert  P_n(\tau\vert t)\mathbf P_n-P(\tau\vert t)\Vert_1\rightarrow 0, \quad  n\rightarrow\infty.
$
\end{proof}

Now let us take the location of observations into account and show the convergence of optimal observation locations of approximated covariance of Kalman filter and smoother.
\begin{theorem}\label{smoother optimal2}
Assume that the assumptions $(A1)-(A4)$ are held and that $H_{r,n}=H_{r}\mathbf P_n$ and  $\lim_{r\rightarrow r_0}\Vert H_{r}-H_{r_0}\Vert_\infty=0$. If $\mathcal E$ and $\mathcal Y$ are finite-dimensional, $P(t_0\vert t_{-1})$ is nuclear and $\lim_{n\rightarrow \infty}\Vert P_n(t_0\vert t_{-1})\mathbf P_n-P(t_0\vert t_{-1})\Vert_1=0$,
then
\begin{equation*}
\hat{\ell}_{1,n}^f(t)\rightarrow \hat \ell_1^f(t),\quad \hat{r}^f_n\rightarrow \hat r^f, \quad
\hat{\ell}_{1,n}^s(\tau|t)\rightarrow \hat \ell_1^s(\tau|t),\quad \hat{r}^s_n\rightarrow \hat r^s, \quad (t,\tau)\in \Gamma_{t_0}^b, \quad n\rightarrow\infty.
\end{equation*}
\end{theorem}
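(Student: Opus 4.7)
The plan is to reduce this theorem to the two preceding results on locations, \emph{continuity in} $r$ (Theorem \ref{continuous kf} and Theorem \ref{continuous ks}) and \emph{approximation convergence in} $n$ (Theorem \ref{smoother app2}), via the subsequence-extraction argument already used in Theorem \ref{optimal convergence} and Theorem \ref{optimal convergence2}. The filter half is essentially free: by the duality between the Kalman filter and the LQ optimal control problem developed at the end of Section \ref{kalman filter}, $P_{r,n}(t\vert t)$ plays the role of $\Pi_{r,n}(b-t)$ for a dual LQ problem whose input operators are $H^*_{r,n}=\mathbf{P}_n H^*_r$. Under $(A1)$--$(A4)$ the dual assumptions $(a1)$--$(a4)$ hold with nuclear terminal weight $P(t_0\vert t_{-1})$, so Theorem \ref{optimal convergence2} immediately yields $\hat{\ell}_{1,n}^f(t)\to\hat{\ell}_1^f(t)$ and $\hat r^f_n\to \hat r^f$.

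For the smoother half, I would first record the two ingredients I will invoke: (i) Theorem \ref{continuous ks} gives $\lim_{r\to r_0}\|P_r(\tau\vert t)-P_{r_0}(\tau\vert t)\|_1=0$ and the existence of $\hat r^s$; (ii) Theorem \ref{smoother app2} gives $\lim_{n\to\infty}\|P_{r,n}(\tau\vert t)\mathbf P_n-P_r(\tau\vert t)\|_1=0$ for each fixed $r$, and similarly Theorem \ref{continuous kf} together with the $\mathbf P_n$-scaled hypothesis $\|H_{r,n}-H_{r_0,n}\|_\infty\le\|H_r-H_{r_0}\|_\infty\to 0$ shows that for each $n$ the infimum $\hat{\ell}_{1,n}^s(\tau\vert t)=\inf_{r\in\Omega^m}\|P_{r,n}(\tau\vert t)\|_1$ is attained at some $\hat r^s_n$.

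The upper bound is the easy direction: evaluating at $r=\hat r^s$,
\begin{equation*}
\hat{\ell}_{1,n}^s(\tau\vert t)\le \|P_{\hat r^s,n}(\tau\vert t)\mathbf P_n\|_1\le \|P_{\hat r^s,n}(\tau\vert t)\mathbf P_n-P_{\hat r^s}(\tau\vert t)\|_1+\|P_{\hat r^s}(\tau\vert t)\|_1,
\end{equation*}
so by Theorem \ref{smoother app2} $\limsup_{n}\hat{\ell}_{1,n}^s(\tau\vert t)\le \hat{\ell}_1^s(\tau\vert t)$. For the lower bound I would pass to a subsequence $\{n_k\}$ realizing $\liminf$, and use compactness of $\Omega^m$ to extract a further subsequence along which $\hat r^s_{n_k}\to \bar r\in\Omega^m$. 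The key step is then to show $\|P_{\hat r^s_{n_k},n_k}(\tau\vert t)\|_1\to \|P_{\bar r}(\tau\vert t)\|_1$, which I handle by splitting
\begin{equation*}
\|P_{\hat r^s_{n_k},n_k}(\tau\vert t)\mathbf P_{n_k}-P_{\bar r}(\tau\vert t)\|_1\le \|P_{\hat r^s_{n_k},n_k}(\tau\vert t)\mathbf P_{n_k}-P_{\hat r^s_{n_k}}(\tau\vert t)\|_1+\|P_{\hat r^s_{n_k}}(\tau\vert t)-P_{\bar r}(\tau\vert t)\|_1,
\end{equation*}
the first term vanishing by Theorem \ref{smoother app2} and the second by Theorem \ref{continuous ks}. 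Hence $\liminf_{n}\hat{\ell}_{1,n}^s(\tau\vert t)=\|P_{\bar r}(\tau\vert t)\|_1\ge \hat{\ell}_1^s(\tau\vert t)$, which with the upper bound gives $\hat{\ell}_{1,n}^s(\tau\vert t)\to \hat{\ell}_1^s(\tau\vert t)$. Consequently $\|P_{\bar r}(\tau\vert t)\|_1=\hat{\ell}_1^s(\tau\vert t)$, and the continuity of $r\mapsto P_r(\tau\vert t)$ in Theorem \ref{continuous ks} forces $\bar r=\hat r^s$ (or at least any such accumulation point is optimal); a standard subsequence-of-subsequence argument then upgrades this to $\hat r^s_n\to \hat r^s$.

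The main obstacle I anticipate is the uniformity needed to combine the two convergences: the approximations of $P_{r,n}$ are established pointwise in $r$ in Theorem \ref{smoother app2}, while the location compactness argument forces me to evaluate at a moving $r=\hat r^s_{n_k}$. Handling this requires the continuity estimate from Theorem \ref{continuous ks} to absorb the drift of the location, which is why that theorem is needed as a separate ingredient even after Theorem \ref{smoother app2} is in hand; the structural hypothesis $H_{r,n}=H_r\mathbf P_n$ together with $\|H_r-H_{r_0}\|_\infty\to 0$ is exactly what makes this splitting work.
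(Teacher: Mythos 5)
Your proposal is correct and follows essentially the same route as the paper, whose proof is the single line ``follows by duality and Theorem \ref{optimal convergence2}'': the filter half is exactly the duality reduction you give, and your smoother half is the same subsequence/compactness argument the paper uses in Theorems \ref{optimal convergence} and \ref{optimal convergence2}, now fed with Theorems \ref{continuous ks} and \ref{smoother app2} -- including the same splitting at the moving location $\hat r^s_{n_k}$ that appears in \eqref{sublocation continuous}. You simply spell out details the paper leaves implicit.
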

\begin{proof}
Follows by duality and Theorem \ref{optimal convergence2}.
\end{proof}

\section{Application}\label{lq advection diffusion}
As a popular data assimilation method, the ensemble Kalman filter and smoother are widely applied in meteorology. Hence, we
consider a linear advection-diffusion model  with $\Omega :=(0,5)\times (0,5)\times (0,1)$ on a fixed time interval $[0,3]$ based on the Kalman filter and smoother, the theoretical foundation of the
ensemble Kalman filter and smoother, as an example:
\begin{align*}
&\dfrac{\partial \delta c}{\partial t}=-v_x\dfrac{\partial \delta c}{\partial x}-v_y\dfrac{\partial \delta c}{\partial y}+\dfrac{\partial}{\partial z}(K(z)\dfrac{\partial \delta c}{\partial z})+\delta e-\delta d,\\
&\delta c(t_0)=\delta c_0,\quad \delta e(t_0)=\delta e_0, \quad\delta d(t_0)=\delta d_0,
\end{align*}
where $\delta c$, $\delta e$ and  $\delta d$ are the perturbations of the concentration, the emission rate and deposition rate of a species, respectively.
 $v_x$ and $v_y$ are constants and $K(z)$ is a continuous differentiable function of $z$.

Defining $A_x:=-v_x\frac{\partial}{\partial x}$, $A_y:=-v_y\frac{\partial}{\partial y}$ and $D_z:=\dfrac{\partial}{\partial z}(K(z)\dfrac{\partial} {\partial z})$
with domains
\begin{align*}
 &\mathcal D(A_x)=\{f\in L^2(\Omega) \mid A_xf\in L^2(\Omega),f(0,y,z)=f(5,y,z)\},\\
 &\mathcal D(A_y)=\{f\in L^2(\Omega) \mid A_yf\in L^2(\Omega), f(x,0,z)=f(x,5,z)\},\\
 &\mathcal D(D_z)=\{f\in L^2(\Omega) \mid D_zf\in L^2(\Omega), f_z(x,y,0)=f_z(x,y,1)=0\}
\end{align*}
and denote by $S_{x}$, $S_{y}$ and $S_{z}$  the semigroups generated by $A_x$, $A_y$ and $D_z$. $S$ is the semigroup generated by $A_x+A_y+D_z$ with the domain
$\mathcal D= \mathcal D(A_x)\cap \mathcal D(A_y)\cap \mathcal D(D_z)$.

In particular, in order to include the emission rate into the state vector as optimized parameter, the dynamic model for the emission rate with constant emission factors \cite{Elbern07} is established as
\begin{equation*}
\delta e(t)=M_e(t,s)\delta e(s),
\end{equation*}
where $M_e(t,s)=\frac{e_b(t)}{e_b(s)}\in L(L^2(\Omega))$, $e_b(\cdot)\in L^2(\Omega)$ is termed as the background knowledge of the emission rate, which is
continuous in time and $\sup_{(t,s)\in \Gamma_{0}^{3}}\Vert\frac{e_b(t)}{e_b(s)}\Vert<\infty$.
According to Definition \ref{mild evolution}, it is easy to show that $M_e(\cdot, \cdot)$ is a self-adjoint mild evolution operator.

Ignoring the model error, the model extended with emission rate is given by
\begin{eqnarray}\label{extended model}
&&\left(\begin{array}{c}
      \delta c(t+\triangle t)\\
     \delta e(t+\triangle t)
      \end{array}
\right)\nonumber\\&=&
M(t+\triangle t,t)
\left(\begin{array}{c}
       \delta c(t)\\
     \delta e(t)
      \end{array}
\right)
-\left(\begin{array}{c}
      \int_{t}^{t+\triangle t} S(t+\triangle t- s)\delta d(s)ds\\
     0
      \end{array}
\right)
\end{eqnarray}
where 
\begin{equation*}
M(t+\triangle t,t)=\left(\begin{array}{cc}
      S(\triangle t) & \int_{t}^{t+\triangle t} S(t+\triangle t- s)M_e(s,t)ds\\
      0     & M_e(t+\triangle t,t)
      \end{array}
\right)
\end{equation*}
also satisfies Definition \ref{mild evolution}.

The numerical solution is based on the symmetric operator splitting technique \cite{Batkai12}, \cite{Yanenko71} with space discretization
via finite difference method with discretized intervals $\triangle x$, $\triangle y$ and  $\triangle z$ in three dimensions.
We assume that the grid points $\{r_i\}_{i=1}^n$ have the coordinates $\{(x_{r_i}, y_{r_i}, z_{r_i})\}$  and define the projection $\mathbf P_n: L^2(\Omega)\rightarrow \mathds R^n$
\begin{equation}\label{example projection}
(\mathbf P_nf)_i:=\frac{1}{V_i}\int_{\Omega_i}f(\omega)d\omega, \quad i=1,\cdots, n.
\end{equation}
where $\Omega_i = [x_{r_i}-\frac{\triangle x}{2}, x_{r_i}+\frac{\triangle x}{2}]\times[y_{r_i}-\frac{\triangle y}{2}, y_{r_i}+\frac{\triangle y}{2}]\times[z_{r_i}-\frac{\triangle z}{2}, z_{r_i}+\frac{\triangle z}{2}]$,
$V_i$ is the volume of $\Omega_i$.
Defining $S_n(\triangle t):=S_{x,n}(\frac{\triangle t}{2})S_{y,n}(\frac{\triangle t}{2})S_{z,n}(\triangle t)S_{y,n}(\frac{\triangle t}{2})S_{x,n}(\frac{\triangle t}{2})$, according to \cite[Theorem 3.17]{Batkai09},
we obtain
\begin{equation}\label{splitting convergence}
\lim_{ n\rightarrow \infty, \triangle t\rightarrow 0}\Vert (S_{n}(\triangle t))^{\frac{t}{\triangle t}}\mathbf P_nf-\mathbf P_n S(t)f\Vert=0,\quad f\in L^2(\Omega).
\end{equation}
With the same space discretization for $\delta c$, the approximation of the emission rate is given by
$\mathbf P_n\delta e(t)=M_{e,n}(t,s)\mathbf P_n\delta e(s)$, where $M_{e,n}(t,s)$ is a diagonal matrix with the diagonal given by
\begin{equation*}
\mbox{diag}(M_{e,n}(t,s))=(\frac{\int_{\Omega_1}e_b(t,\omega)d\omega}{\int_{\Omega_1} e_b(s, \omega)d\omega}, \cdots, \frac{\int_{\Omega_n} e_b(t,\omega)d\omega}{\int_{\Omega_n} e_b(s, \omega)d\omega}).
\end{equation*}
Then, we can easily get
\begin{equation*}
\left\Vert \frac{\int_{\Omega_i} e_b(t,\omega)d\omega}{\int_{\Omega_i} e_b(s, \omega)d\omega}(\mathbf P_nf)_i-(\mathbf P_nM_e(t,s)f)_i\right\Vert\rightarrow 0,\quad n\rightarrow\infty,\quad f\in L^2(\Omega),
\end{equation*}
so is the adjoint of $M_e(t,s)$.
The extended model with operator splitting discretized in space can be written as
\begin{eqnarray*}
\left(\begin{array}{c}
      \delta c_n(t+\triangle t)\\
     \delta e_n(t+\triangle t)
      \end{array}
\right)&=&
M_n(t+\triangle t,t)
\left(\begin{array}{c}
       \delta c_n(t)\\
     \delta e_n(t)
      \end{array}
\right)\\
&-&\left(\begin{array}{c}
     S_{x,n}(\frac{\triangle t}{2})S_{y,n}(\frac{\triangle t}{2})\int_{t}^{t+\triangle t} S_{z,n}(t+\triangle t- s)\delta d_n(s)ds\\
     0
      \end{array}
\right),
\end{eqnarray*}
where $\delta c_n(t)=\mathbf P_n\delta c(t)$, $\delta e_n(t)=\mathbf P_n\delta e(t)$ and $\delta d_n(t)=\mathbf P_n\delta d(t)$
\begin{equation*}
M_n(t+\triangle t,t)=\left(\begin{array}{cc}
      S_n(\triangle t) & S_{x,n}(\frac{\triangle t}{2})S_{y,n}(\frac{\triangle t}{2})\int_{t}^{t+\triangle t} S_{z,n}(t+\triangle t- s)M_{e,n}(s,t)ds\\
      0     & M_{e,n}(t+\triangle t,t)
      \end{array}
\right).
\end{equation*}
For any pair of time  $(t,s)\in\Gamma_0^3$, assume $m=\frac{t-s}{\triangle t}\in \mathds N$, we have
\begin{align*}
&\prod_{i=1}^m M_n(s+i\triangle t, s+(i-1)\triangle t)\\&=
\left(\begin{array}{cc}
        (S_n(\triangle t))^m& \sum_{i=1}^m \int_{s+(i-1)\triangle t}^{s+i\triangle t} S_{ce,n}^i(t-h)M_{e,n}(h,s)dh\\
     0& M_{e,n}(t,s)
      \end{array}
\right),
\end{align*}
where $S_{ce,n}^i(t-h)= (S_n(\triangle t))^{m-i}S_{x,n}(\frac{\triangle t}{2})S_{y,n}(\frac{\triangle t}{2})S_{z,n}(s+i\triangle t-h)$, $h\in[s+(i-1)\triangle t,s+i\triangle t]$.
In order to show that $\prod_{i=1}^m M_n(s+i\triangle t, s+(i-1)\triangle t)\mathbf P_n$ is strongly convergent to $\mathbf P_nM(t,s)$, we only need to show
\begin{equation*}
\Vert S_{ce,n}^i(t-h)M_{e,n}(h,s)\mathbf P_nf-\mathbf P_nS(t-h)M_e(h,s)f\Vert\rightarrow 0, \quad m,n\rightarrow \infty.
\end{equation*}
In fact,
\begin{align*}
&\Vert S_{ce,n}^i(t-h)M_{e,n}(h,s)\mathbf P_nf-\mathbf P_nS(t-h)M_e(h,s)f\Vert\\
&\leqslant\Vert S_{ce,n}^i(t-h)M_{e,n}(h,s)\mathbf P_nf-S_{ce,n}^i(t-h)\mathbf P_nM_e(h,s)f\Vert\\
&+\Vert S_{ce,n}^i(t-h)\mathbf P_nM_e(h,s)f-\mathbf P_nS(t-h)M_e(h,s)f\Vert,
\end{align*}
where, clearly, $\Vert S_{ce,n}^i(t-h)M_{e,n}(h,s)\mathbf P_nf-S_{ce,n}^i(t-h)\mathbf P_nM_e(h,s)f\Vert\rightarrow 0,\quad m,n\rightarrow \infty$.
Moreover, we have
\begin{align*}
&\Vert S_{ce,n}^i(t-h)\mathbf P_nM_e(h,s)f-\mathbf P_nS(t-h)M_e(h,s)f\Vert\\
\leqslant& \Vert ((S_n(\triangle t))^{m-i}-S(t-s-i\triangle t))S_{x,n}(\frac{\triangle t}{2})S_{y,n}(\frac{\triangle t}{2})S_{z,n}(s+i\triangle t-h)\mathbf P_nM_e(h,s)f\Vert\\
+&\Vert S(t-s-i\triangle t)( S_{x,n}(\frac{\triangle t}{2})S_{y,n}(\frac{\triangle t}{2})S_{z,n}(s+i\triangle t-h)\mathbf P_n\\&- \mathbf P_nS(s+i\triangle t-h))M_e(h,s)f \Vert,
\end{align*}
where, according to \eqref{splitting convergence}, $\Vert ((S_n(\triangle t))^{m-i}-\mathbf P_nS(t-s-i\triangle t))S_{x,n}(\frac{\triangle t}{2})S_{y,n}(\frac{\triangle t}{2})S_{z,n}(s+i\triangle t-h)\mathbf P_nM_e(h,s)f\Vert\rightarrow 0$ and
\begin{eqnarray*}
&&\Vert  (S_{x,n}(\frac{\triangle t}{2})S_{y,n}(\frac{\triangle t}{2})S_{z,n}(s+i\triangle t-h)\mathbf P_n- \mathbf P_nS(s+i\triangle t-h))M_e(h,s)f \Vert\\
&\leqslant& \Vert S_{x,n}(\frac{\triangle t}{2})S_{y,n}(\frac{\triangle t}{2})S_{z,n}(s+i\triangle t-h)\Vert\\
&&\cdot\Vert(I- S_{z,n}(h-s-(i-1)\triangle t))\mathbf P_nM_e(h,s)f \Vert\\
&+&\Vert (S_{x,n}(\frac{\triangle t}{2})S_{y,n}(\frac{\triangle t}{2})S_{z,n}(\triangle t)-S_n(\triangle t))\mathbf P_nM_e(h,s)f \Vert\\
&+& \Vert (S_n(\triangle t)\mathbf P_n-\mathbf P_nS(\triangle t))M_e(h,s)f \Vert\\
&+&\Vert (S(h-s-(i-1)\triangle t)-I)S(s+i\triangle t-h)M_e(h,s)f\Vert\rightarrow 0, \ \triangle t\rightarrow 0,\ n\rightarrow \infty.
\end{eqnarray*}

Further, we discretize the model in time by the Lax-Wendroff scheme for advection equations in horizontal directions and Crank-Nicolson scheme for the diffusion equation
in the vertical direction such that $S_{x/y/z,n}$ are approximated by
\begin{align*}
&\tilde S_{x/y,n}(\frac{\triangle t}{2})=I+\frac{\triangle t}{2}A_{x/y,n}+\frac{\triangle t^2}{8}A_{x/y,n}^2, \\
&\tilde S_{z,n}(\triangle t)=(I-\frac{\triangle t}{2}D_{z,n})^{-1}(I+\frac{\triangle t}{2}D_{z,n}),\\
&\tilde B_{z,n}^e(t,s)f=(I-\frac{\triangle t}{2}D_{z,n})^{-1}(\frac{\triangle t}{2}M_{e,n}(t,s)f),
\end{align*}
where $A_{x/y,n}$ and $D_{z,n}$ is the approximate generators to $n$-dimensional state space based on finite difference methods.

It is well known \cite{Dautray99} that the Lax-Wendroff scheme is consistent and conditional stable for $A_x$ and $A_y$ and the Crank-Nicolson scheme is consistent and stable for $D_z$, $(I-\frac{\triangle t}{2}D_{z,n})^{-1}$ is the consistent and condition stable implicit Euler scheme,
by Lax equivalence theorem, that is
\begin{align*}
\lim_{\triangle t\rightarrow 0}\Vert(\tilde S_{x/y/z,n}(\triangle t))^{\frac{t}{\triangle t}}f-S_{x/y/z,n}(t)f\Vert=0, \quad f\in L^2(\Omega),\\
\lim_{\triangle t\rightarrow 0}\Vert((I-\frac{\triangle t}{2}D_{z,n})^{-1})^{\frac{2t}{\triangle t}}f-S_{z,n}(t)f\Vert=0, \quad f\in L^2(\Omega).
\end{align*}
Similarly defining $\tilde S_n:=\tilde S_{x,n}\tilde S_{y,n}\tilde S_{z,n}\tilde S_{y,n}\tilde S_{x,n}$,
\begin{equation}\label{convergence discrete time}
\lim_{ n\rightarrow \infty, \triangle t\rightarrow 0}\Vert (\tilde S_n(\triangle t))^{\frac{t}{\triangle t}}\mathbf P_nf-\mathbf P_nS(t)f\Vert=0,\quad f\in L^2(\Omega).
\end{equation}
Since $A_x$, $A_y$ and $D_z$ are self-adjoint, $\tilde S_n^*(\triangle t))^{\frac{t}{\triangle t}}$ is also strongly convergent to $S^*(t)$.

Thus, \eqref{extended model} is approximated by
\begin{eqnarray*}
&&\left(\begin{array}{c}
      \delta \tilde c_n(t+\triangle t)\\
     \delta \tilde e_n(t+\triangle t)
      \end{array}
\right)\\&=&
\left(\begin{array}{cc}
      \tilde S_{n}(\triangle t)& \tilde S_{x,n}(\frac{\triangle t}{2})\tilde S_{y,n}(\frac{\triangle t}{2})\tilde B_{z,n}^e(t+\triangle t,t)\\
      0     & M_{e,n}(t+\triangle t,t)
      \end{array}
\right)
\left(\begin{array}{c}
       \delta \tilde c_n(t)\\
     \delta \tilde e_n(t)
      \end{array}
\right)\\
&-&
\left(\begin{array}{c}
     \tilde S_{x,n}(\frac{\triangle t}{2})\tilde S_{y,n}(\frac{\triangle t}{2})(I-\frac{\triangle t}{2}D_{z,n})^{-1}[\frac{\triangle t}{2}(\delta d_n(t+\triangle t)+\delta d_n(t))]\\
     0
      \end{array}
\right).
\end{eqnarray*}
Defining the above block evolution operator as $\tilde M_n(t,s)$, $(t,s)\in \Gamma_0^3$, we have
\begin{eqnarray*}
&&\prod_{i=1}^m \tilde M_n(s+i\triangle t, s+(i-1)\triangle t)\\&=&
\left(\begin{array}{cc}
        (\tilde S_n(\triangle t))^m& \sum_{i=1}^m(\tilde S_n(\triangle t))^{m-i}\tilde S_{x,n}(\frac{\triangle t}{2})\tilde S_{y,n}(\frac{\triangle t}{2})\tilde B_{z,n}^e(s+i\triangle t,s)\\
     0& M_{e,n}(t,s)
      \end{array}
\right).
\end{eqnarray*}
We define by 
\begin{eqnarray*}
&&B_{z,n}^e(s+i\triangle t,s+(i-1)\triangle t, s)f\\&:=&\int_{s+(i-1)\triangle t}^{s+i\triangle t}S_{z,n}(s+i\triangle t-h)M_{e,n}(h,s)fdh, \quad f\in L^2(\Omega).
\end{eqnarray*}
By the trapezoidal rule and convergence of the implicit Euler scheme, we have
\begin{eqnarray*}
&&\Vert \tilde B_{z,n}^e(s+i\triangle t,s)f-B_{z,n}^e(s+i\triangle t,s)f\Vert\\
&\leqslant&\Vert((I-\frac{\triangle t}{2}D_{z,n})^{-1}-S_{z,n}(\triangle t))(\frac{\triangle t}{2}M_{e,n}(s+i\triangle t,s)f)\Vert\\
&+&\Vert\frac{\triangle t}{2}M_{e,n}(s+i\triangle t,s)f \Vert+\Vert \frac{\triangle t}{2}S_{z,n}(\triangle t)M_{e,n}(s+(i-1)\triangle t,s)f\\&&+\frac{\triangle t}{2}M_{e,n}(s+i\triangle t,s)f-B_{z,n}^e(s+i\triangle t,s)f\Vert \\
&+&\Vert \frac{\triangle t}{2}S_{z,n}(\triangle t)(M_{e,n}(s+i\triangle t,s)f-M_{e,n}(s+(i-1)\triangle t,s)f)\Vert\rightarrow 0, \ \triangle t\rightarrow 0.
\end{eqnarray*}

For the observation system, we assume there is only a single observation during the entire time interval and define the observation mapping
$H_r: L^2(\Omega)\rightarrow \mathds R$ by
$$
H_rf:=\frac{1}{ V_r}\int_{\Omega_r}f(\omega)d\omega, \quad r=(x_r,y_r,z_r), \quad f\in L^2(\Omega),
$$
where $\Omega_r$ and $V_r$ are similarly defined as \eqref{example projection}.
Then, the observation system extended with the emission rate is given by
\begin{equation*}
 \delta y(t)=(H_r,0)
\left(\begin{array}{c}
       \delta c(t)\\
     \delta e(t)
      \end{array}
\right)+\nu(t),
\end{equation*}
where $\delta y(t)\in \mathds R$ and $\nu(t)$ is the white noise with distribution $N(0,1)$.

According to the spatial discretization of the model, in the vertical direction, $[0,1]$ is discretized into three layers $\{0,0.5,1\}$.
Since the diffusion coefficient $K(z)$ is small, we assume possible locations of the single observation are around the grid points in the first layer $z=0$.

We have already shown that the assumptions $(A1)-(A3)$ in Section \ref{opt loc of kf and ks} and the compactness of the possible area of observation locations are satisfied.

In addition, according to the spatial discretization, we assume that the initial covariance is given by $P_n(t_{0}\vert t_{-1})=e^{-8}I_n$, where $I_n$ is the $n\times n$ identity matrix.
It implies that $P_n(t_0|t_{-1})$ does not converge to a nuclear operator.
It is shown in Figure 1 that the optimal location and minimal cost based on Kalman filter do not converge in this situation.
\begin{figure*}
\centering
\includegraphics[scale=0.7]{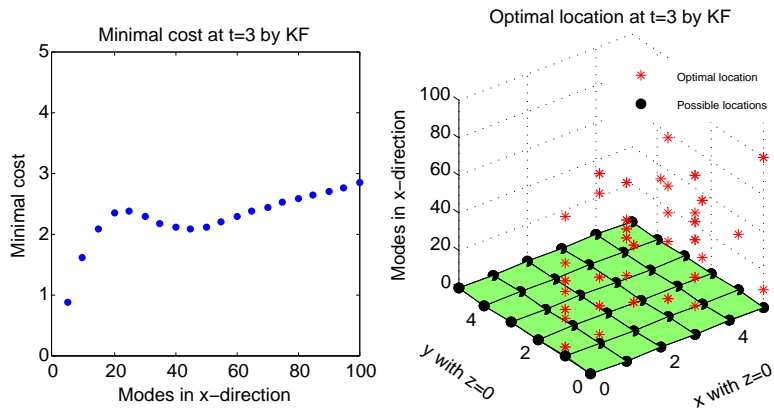}
\caption{\footnotesize Minimal cost and optimal location based on Kalman filter without the nuclearity of $P(t_0|t_{-1})$.}
\end{figure*}
\begin{figure*}
\centering
\includegraphics[scale=0.7]{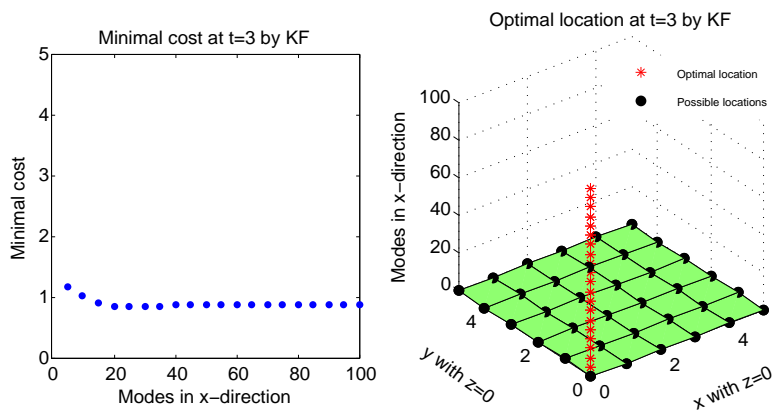}
\caption{\footnotesize Minimal cost and optimal location of the estimation of the state at final time by Kalman filter.}
\end{figure*}
\begin{figure*}
\centering
\includegraphics[scale=0.7]{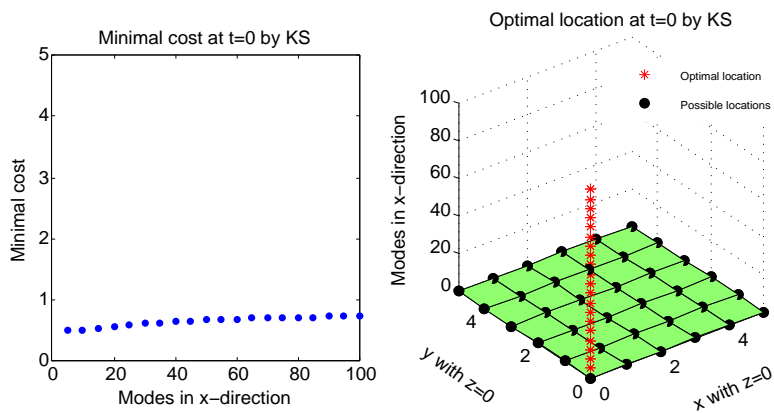}
\caption{\footnotesize Minimal cost and optimal location of the estimation of the initial state by Kalman smoother.}
\end{figure*}

Then, according to \eqref{convergence discrete time} and dominated convergence theorem, we obtain that
$$\sum_{i=1}^m(\tilde S_n(\triangle t))^{m-i}\tilde S_{x,n}(\frac{\triangle t}{2})\tilde S_{y,n}(\frac{\triangle t}{2})\tilde B_{z,n}^e(s+i\triangle t,s)$$ is strongly convergent to
$$\sum_{i=1}^m (S_n(\triangle t))^{m-i}S_{x,n}(\frac{\triangle t}{2})S_{y,n}(\frac{\triangle t}{2})B_{z,n}^e(s+i\triangle t,s).$$ Further, 
$\prod_{i=1}^m \tilde M_n(s+i\triangle t, s+(i-1)\triangle t)$ is strongly convergent to $\prod_{i=1}^m M_n(s+i\triangle t, s+(i-1)\triangle t)$.

Next we define the initial covariance as
\begin{equation*}
P(t_{0}\vert t_{-1})f=\sum_{i=1}^{\infty}e^{-i^2}\langle f,e_i\rangle e_i,\quad f\in L^2(\Omega),
\end{equation*}
where $\{e_i\}$ is an orthogonal basis of $L^2(\Omega)$. The $n$-dimensional approximation of $P(t_{0}\vert t_{-1})$ is given by
\begin{equation*}
P_n(t_{0}\vert t_{-1})\mathbf P_nf=\sum_{i=1}^{n}e^{-i^2}\langle\mathbf P_n f,e_i\rangle e_i,\quad f\in L^2(\Omega).
\end{equation*}
With this choice, $P(t_0|t_{-1})$ is nuclear and the assumption $(A4)$ in Section \ref{opt loc of kf and ks} is satisfied.
By Theorem \ref{smoother optimal2},
the optimal location and minimal cost based on Kalman filter and smoother are convergent, which are shown in Figure 2 and Figure 3, respectively.

\end{document}